\newcommand{\commentout}[1]{}
\newcommand {\vp} {\varphi}
\newcommand{\beq}{\begin{equation}}
\newcommand{\eeq}{\end{equation}}
\newcommand{\bea} {\begin{array}{rl}}
\newcommand{\eea} {\end{array}}
\newcommand{\bepa}{\left\{ \begin{array}{l}}
\newcommand{\eepa} {\end{array}\right.}
\newtheorem{theorem}{Theorem}
\newtheorem{lemma}[theorem]{Lemma}
\newtheorem{definition}[theorem]{Definition}
\newtheorem{remark}[theorem]{Remark}
\newtheorem{proposition}[theorem]{Proposition}
\newtheorem{notation}[theorem]{Notation}
\newcommand{\RNum}[1]{\uppercase\expandafter{\romannumeral #1\relax}}
\newcommand{\qed}{{ \hfill
                       {\unskip\kern 6pt\penalty 500 \raise -2pt\hbox{\vrule\vbox to 6pt{\hrule width 6pt
                       \vfill\hrule}\vrule} \par}   }}
\title{On the incompressible limit of Keller-Segel system with volume-filling effects}
\author{ 
Qingyou He\thanks{Sorbonne Universit{\'e}, CNRS, Laboratory of Computational and Quantitative Biology (LCQB), 75205 Paris Cedex 06, France.
Email : qingyou.he@sorbonne-universite.fr}
\and Mingyue Zhang\footnotemark[1]\thanks{Sorbonne Universit{\'e}, CNRS, Laboratoire Jacques-Louis Lions, F-75005 Paris. Email : mingyue.zhang@sorbonne-universite.fr}
}
\date{}
\begin{document}
\maketitle
\pagestyle{plain}
\pagenumbering{arabic}
\begin{abstract}
We consider the Keller-Segel system with a volume-filling effect and study its incompressible limit. Due to the presence of logistic-type sensitivity, $K=1$ is the critical threshold. When $K>1$, as the diffusion exponent tends to infinity, by supposing the weak limit of $u^2_m$, we prove that the limiting system becomes a Hele-Shaw type free boundary problem. For $K\le 1$, we justify that  the stiff pressure effect ($\Delta P_\infty$) vanishes, resulting in the limiting system being a hyperbolic Keller-Segel system. Compared to previous studies, the new challenge arises from the stronger nonlinearity induced by the logistic chemotactic sensitivity. To address this, our first novel finding is the proof of strong convergence of the density on the support of the limiting pressure, thus confirming the validity of the \emph{complementarity relation} for all  $K>0$. Furthermore, specifically for the case  $K\le1$, by introducing the \emph{kinetic formulation}, we verify the strong limit of the density required to reach the incompressible limit.
\end{abstract}
 \noindent{\bf Key-words.} Chemotaxis; Keller-Segel system; Volume-filling effect; Incompressible limit; Kinetic formulation.
\\
2020 {\bf MSC.} 35Q92; 35R35; 35D30; 35K65; 92B05.
\section{Introduction}
We consider the Keller-Segel system with the logistic type sensitivity producing the volume-filling effect, expressed by
\begin{equation}\label{deq}
\begin{cases}
\partial_t u_m=\Delta u_m^m-\nabla\cdot(u_m(K-u_m)\nabla c_m),\quad &x\in \Omega,\ t>0,\\
-\Delta c_m+c_m=u_m,\quad &x\in \Omega,\ t>0,\\
\nabla u_m \cdot \vec{n}=\nabla c_m\cdot \vec{n}=0,\quad &x\in\partial  \Omega,\ t>0. 
\end{cases}
\end{equation}
 The solution $u_m$ denotes the cell population, $c_m$ is the concentration of chemoattractant, $m>1$ describes the diffusion exponent (slow diffusion), $K>0$ is the general converging capacity of the cell population density, $\Omega\subset \mathbb{R}^n$ ($n\geq2$) is a bounded domain with $\mathcal{C}^1$ boundary, and $\vec{n}(x)$ is
the outward normal vector to $\Omega$ at $x\in\partial \Omega$.
Another crucial variable for the porous medium type equation is the pressure
\[P_m=\frac{m}{m-1}u_m^{m-1}\quad (\text{Pressure law}),\]
which  solves the corresponding pressure equation 
\begin{equation}\label{pe}
\partial_t P_m=(m-1)P_m[\Delta P_m+(K-u_m)(u_m-c_m)]+\nabla P_m\cdot(\nabla P_m+(2u_m-K)\nabla c_m).
\end{equation} For simplicity, we set 
\begin{equation}\label{ini}u_{m,0}(\cdot):=u_m(\cdot,0)\geq0,\ P_{m,0}(\cdot):=P_{m}(\cdot,0),\ x\in\Omega.\end{equation}

\vspace{2mm}
 The main objective of this paper is to study the incompressible limit of system~\eqref{deq}. Compared to the classical Keller-Segel system with volume filling effect, i.e., $m=1$ and $K=1$ (see, e.g.,\cite{Hillen2002,bubba2020discrete,CarrilloCalvez2006,wang2007classical}),
system~\eqref{deq}  has a stronger nonlinearity for the density. It has more challenge than in the previous relevant works (see, e.g., ~\cite{PERTHAME2014,BPPS2020,DAVID2021,HLP2023,DPSV2021,DS2021}) since the cut-off effect acts on the density instead of the pressure. Therefore, to complete the incompressible limit, strong limits of both density $u_m$ and pressure gradient $\nabla P_m$ in the diffusion exponent are required.
 
 \vspace{2mm}
 



Chemotaxis refers to the active movement of cells and organisms in response to chemical gradients. This phenomenon has been modeled in numerous contexts, including bacterial movement, immune response, and cancer progression. A prominent mathematical model for chemotaxis is the Patlak-Keller-Segel system, introduced in \cite{patlak1953random, keller1970initiation}. Due to its simplicity, this model has been extensively applied across various biological systems (see \cite{Murray2003, Maini2008, Hillen2009, Painter2018}).

\vspace{2mm}

Numerous modifications have been proposed, such as incorporating a nonlinear diffusion coefficient or nonlinear chemotactic sensitivity. Many studies focus on a nonlinear diffusion coefficient dependent on~$u$, often of the porous medium type, as seen in \cite{kowalczyk2005preventing, CarrilloCalvez2006, wang2007classical}. Nonlinear chemotactic sensitivities have also been explored, with the logistic sensitivity function being a notable example. The system \eqref{deq}, which includes both nonlinear diffusion and nonlinear chemotactic sensitivity, has been proposed in \cite{bpmz2024}.

\paragraph{Related studies.}
The porous medium type equations have a limit as $m\rightarrow\infty$, namely, a weak form of the Hele-Shaw type free boundary problem. The authors in~\cite{PERTHAME2014} first extended related studies to tumor growth models, contributing to a large body of works in this direction~\cite{DAVID2021,DPSV2021,DS2020,GKM2022,KM2023,KT2018,LX2021, PV2015}. A preliminary study of the Hele-Shaw asymptotic (limit)~(\cite{DAVID2021, PERTHAME2014,PQTV2014}) for the tumor growth model is carried out using the weak solution method. The incompressible (Hele-Shaw) limit of tumor growth incorporating convective effects is verified in~\cite{DS2021}, and the decay rates on the diffusion exponent~$m$ are argued in~\cite{DDB2022,NAF2024}. For a tumor growth model where Brinkmann's pressure law governs the motion, the authors establish the optimal uniform decay rate of density and pressure in $m$ by a viscosity solution method in~\cite{KT2018}, and prove the Hele-Shaw limit for the two-species case by a compactness technique in~\cite{DPSV2021,DS2020}. In~\cite{GKM2022,KM2023}, the Hele-Shaw limit for the non-monotonic (or even non-local) reaction term of the porous medium equation is obtained using the obstacle problem approach. Weak solutions of tissue growth models with autophagy and the existence of a free boundary (Hele-Shaw) limit have been obtained in~\cite{LX2021}. For the porous medium equation with a drift, the singular limit is studied by the viscosity solution method in~\cite{KPW2019}. The convergence of the free boundary in the incompressible limit of tumor growth with convective effects is recently achieved in \cite{ZT2014}. A rigorous derivation of a Hele-Shaw type model and its non-symmetric travelling wave solution for the tumor growth model are given in \cite{fhlz2024}. The Hele-Shaw limit for the Patlak-Keller-Segel model is first proved in \cite{CKY2018} by combining the viscosity solution and the gradient flow, then in \cite{HLP2023,HLP2022} (even with a growth term)  by the weak solution method.
\vspace{3mm}

\paragraph{Understanding the difficulties.} For the Keller-Segel system with volume filling effects~\eqref{deq}, the stronger nonlinearity of this density leads to the main difficulty in the process of the incompressible limit. To understand this difficulty, we present some  classical  approaches to studying the incompressible limit for the related problems, and then explain that these do not apply to 
\eqref{deq}. The case with linear sensitivity function \eqref{deq}, called Patlak-Keller-Segel system, has been studied in \cite{HLP2022} by the method of weak solution, and in \cite{CKY2018} by combining the gradient flow and the viscosity solution. Recently, \cite{HLP2022} has been extended to chemotaxis with growth~\cite{HLP2023}, whose growth (source) term, producing the logistic effect, is the product of the density and the function of the pressure. For these types of source term and linear sensitive function, the strong convergence of the pressure is sufficient to pass to the limit on $m$, as in \cite{HLP2023,noemi2023,HLP2022}, and the strong limit of the density is not necessary. For the reaction-diffusion equations modelling tumor growth \cite{PERTHAME2014,DAVID2021,BPPS2020}, the $BV$ regularity of the density can be obtained by using Kato's inequality, which further gives the strong convergence of  density by the compact embedding.    For the non-monotonic source terms, the authors in \cite{KM2023} use the viewpoint of the obstacle problem to verify the complementarity relation, the $BV$ regularities of density and pressure are still required. Due to the presence of the aggregation effect for the Keller-Segel system, it is difficult to obtain the $BV$ regularity of the density; see~\cite{HLP2023,HLP2022,CKY2018}. We can shed light on the $BV$ regularity of pressure on the chemotaxis as in \cite{HLP2022}.  When the diffusion is governed by Brinkman's law, the authors in \cite{DPSV2021} use the compactness criteria denoted by an integral formula to show the strong limit of the density, and also do so for the pressure by analysing the corresponding kinetic variable. For system~\eqref{deq}, even though the pressure gradient is one of the velocities, the regularity of the pressure gradient is not enough for this kind of integral-type compactness criterion, as in \cite{DPSV2021}. In \cite{IN2021}, the authors use an accretive operator method to obtain the strong convergence of density. However, for system \eqref{deq}, the non-local aggregation effect causes the loss of the uniform spectral radius of the accretive operator. Therefore, a new idea or approach is required to address this issue.  

\paragraph{Strategies.} To achieve the incompressible limit, as previously mentioned, we need to establish the strong limits of both density and pressure gradient. Initially, we utilize the methods outlined in \cite{HLP2023,noemi2023,LX2021} to demonstrate the strong limit of the pressure and its gradient, bypassing the need for the Aronson-Bénilan type estimate as used in \cite{PERTHAME2014,DAVID2021}. The innovative aspect of this work lies in justifying the strong limit of the density on the support of the limiting pressure, without relying on regularity estimates for the density. Following this, we derive the \emph{complementarity relation}
\begin{equation}\label{comrela}P_\infty[\Delta P_\infty+(K-1)(1-c_\infty)]=0,\end{equation}
which includes the pressure in a degenerate elliptic equation. The verification of the complementarity relation is the main challenge in the previous works; for example~\cite{PERTHAME2014,BPPS2020,DAVID2021,HLP2022,LX2021,DS2020,DS2021,HLP2023,noemi2023}. The reason is that the strong convergence of the pressure gradient, which is used to deal with the nonlinearity of the pressure, generally requires higher regularity estimates of the pressure. Secondly, we find that $K=1$ is a critical threshold for the system \eqref{deq}. To be more precisely, when $K=1$, we prove that the limiting pressure gradient vanishes, i.e. $\nabla P_\infty=0$. When $0<K<1$, the limiting pressure vanishes, i.e. $P_\infty=0$. Therefore, the case with $K\leq1$ shows that the stiff pressure effect ($\Delta P_\infty$), the main feature of the Hele-Shaw problem \cite{IK2003}, vanishes, and thus the limiting equation is the so-called hyperbolic Keller-Segel system~\cite{bp2009}. Thirdly, we prove the strong limit of density. To do this, we introduce the kinetic formulation, which originates from the hyperbolic conversation law~\cite{bp2002}. More precisely, for $K\leq 1$, the vanishing of the stiff pressure effect $(\Delta P_\infty=0)$ leads to the effectiveness of the so-called kinetic formulation method in \cite{bp2009}. 

\vspace{3mm}

We introduce some notations that are useful in the rest of this paper.
\begin{notation}
Given $T>0$ and $\Omega\subset \mathbb{R}^n$, we denote
\begin{itemize}
\item $Q_T=\Omega\times (0,T)$, $\Gamma_T=\partial\Omega\times (0,T)$.
\item $C(T)>0$ is a universal constant, and depends only on $T$. 
\end{itemize}
\end{notation}
\paragraph{Initial assumptions.} We assume that the initial data $u_{m,0}=u_m(x,0)$ and $P_{m,0}=P_m(x,0)$ satisfy
\begin{align}
&0\leq u_{m,0}\leq K,\quad \|u_{m,0}\|_{L^1(\Omega)}\leq \alpha|\Omega|,  \quad m>1,\ \alpha\in(0,1),\label{a1}\\
&\|P_{m,0}\|_{L^1(\Omega)}\leq C,\quad P_{m,0}:=\frac{m}{m-1}u_{m,0}^{m-1}, \quad m>1,\label{a2}\\
&\lim\limits_{m\to\infty}\|u_{m,0}-u_{\infty,0}\|_{L^1(\Omega)}=0\quad\text{ with   }u_{\infty,0}\in L^1(\Omega).\label{a3}
\end{align}

\begin{remark}
The $L^1$ condition in \eqref{a1} may seem strong. However it is obviously necessary to establish the bound by 1 in \eqref{hss1}.
\end{remark}
\begin{definition}[Weak solution] Under the initial assumptions~\eqref{a1}, the non-negative function $u_m: \mathbb{R}^n\times (0,\infty)\to[0,\infty)$ is said to be a weak solution of system~\eqref{deq} provided that for any $T>0$,   
\[0\leq u_m\leq K,\quad u_m\in \mathcal{C}(0,T;L^1(\Omega)),\quad u_m^m\in L^2(0,T;H^1(\Omega)),\quad \text{and }\ c_m\in \mathcal{C}(0,T;W^{2,p}(\Omega))\]
with $p\in[1,\infty)$, and the system holds in the sense of distribution; i.e., for any test functions $\varphi\in \mathcal{C}^1(\Omega\times [0,T))$ with $\vp(x,T)=0$ and $\phi\in\mathcal{C}^1(\Omega)$, it holds true that 
\begin{equation*}\label{ee1}\iint_{Q_T}[-u_m \partial_t\varphi+\nabla u_m^m\cdot\nabla\varphi-u_m(K-u_m)\nabla c_m\cdot\nabla \varphi]dxdt=\int_{\Omega}u_{m,0}\varphi(x,0)dx,\end{equation*}
\begin{equation*}\label{ee2}\iint_{Q_T}[\nabla c_m\cdot\nabla\phi+(c_m-u_m)\phi] dxdt=0.\end{equation*}
\end{definition}

\paragraph{Main results.}\rm
We first state that the complementary relation holds for any $K>0$.
\begin{theorem}[Complementarity relation]\label{crelation}
Assume that \eqref{a1}, $\eqref{a2}$ and \eqref{a3} hold for the initial data $(u_{m,0},P_{m,0})$. Then, for any  $T>0$ and any $K>0$, and the weak solution $(u_m,c_m)$ of the problem \eqref{deq} and \eqref{ini}, there exist $ u_\infty,\overline{u_\infty^2}\in L^\infty(Q_T)$, $P_\infty\in L^2(0,T;H^1(\Omega))$ and $c_\infty\in L^\infty(0,T;W^{2,q}(\Omega))$ with $q\in[1,\infty)$ such that, up to a subsequence (still denoted by $u_m, c_m, P_m$), as $m\to\infty$, we have
\begin{equation}\label{ss0}
u_m,\ u_m^2\rightharpoonup u_\infty,\ \overline{u_\infty^2}, \quad *-\text{weak in }L^\infty(Q_T),
\end{equation}
\begin{equation}\label{ss1}
c_m\to c_\infty,\quad \text{strongly in }L^p(0,T;W^{1,p}(\Omega))\text{ with } p\in[1,\infty),
\end{equation}
\begin{equation}\label{ss3}
u_m\sqrt{P_\infty}\to \sqrt{P_\infty}, \quad\text{strongly in }L^2(Q_T),
\end{equation}
\begin{equation}\label{ss2}
\nabla P_m\to \nabla P_\infty,\quad\text{strongly in }L^2(0,T;H^1(\Omega)).
\end{equation}
Moreover, the complementraity relation \eqref{comrela} holds in the sense of distribution and we have
\begin{equation}\label{hss1}
\begin{cases}
\partial_t u_\infty=\Delta P_\infty-\nabla\cdot\big((Ku_\infty-\overline{u_\infty^2})\nabla c_\infty\big),\quad &\text{ in }\mathcal{D}'(Q_T),\\
-\Delta c_\infty+c_\infty=u_\infty,\ 0\leq u_\infty,\overline{u_\infty^2},c_\infty\leq 1,\quad &\text{ a.e. in }Q_T, \\
P_\infty(1-u_\infty)=0,\ \nabla P_\infty(1-u_\infty)=0,\quad &\text{ a.e. in }Q_T, \\
\nabla c_\infty\cdot \vec{n}=\nabla P_\infty\cdot \vec{n}=0,\ &\text{ on }\Gamma_T.
\end{cases}
\end{equation}
\end{theorem}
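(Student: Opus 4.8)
The plan is to combine standard uniform energy bounds — obtained essentially as in \cite{HLP2022,HLP2023,LX2021}, in particular without any Aronson--B\'enilan inequality — with a new argument that pins down the weak limit of the density on the positivity set of $P_\infty$; the latter is what makes the complementarity relation accessible for every $K>0$. \emph{Step 1 (a priori estimates and weak compactness).} By the definition of weak solution, $0\le u_m\le K$, so $\{u_m\},\{u_m^2\}$ are bounded in $L^\infty(Q_T)$, and elliptic regularity for $-\Delta c_m+c_m=u_m$ gives $\|c_m\|_{L^\infty(0,T;W^{2,p}(\Omega))}\le C$ for every $p<\infty$. Testing the density equation by $P_m$ (using $\partial_t u_m\,P_m=\tfrac1{m-1}\partial_t(u_m^m)$ and $\nabla u_m^m=u_m\nabla P_m$) and exploiting the dissipative structure of the pressure equation \eqref{pe}, along the lines of the cited works, produces the $m$-uniform bounds $\|P_m\|_{L^\infty(0,T;L^1)}+\|\nabla P_m\|_{L^2(0,T;H^1)}+\|u_m^m\|_{L^\infty(0,T;L^1)}\le C$, while the weak formulation gives $\|\partial_t u_m\|_{L^2(0,T;H^{-1})}\le C$. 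Up to a subsequence this yields \eqref{ss0} and $\nabla P_m\rightharpoonup\nabla P_\infty$ weakly in $L^2(0,T;H^1)$; since $\partial_t c_m$ solves $-\Delta(\partial_t c_m)+\partial_t c_m=\partial_t u_m$ and is thus two derivatives smoother, Aubin--Lions gives \eqref{ss1}. Finally, $\int_{Q_T}P_m\ge\tfrac{m}{m-1}(1+\delta)^{m-1}\bigl|\{u_m>1+\delta\}\bigr|$ forces $|\{u_m>1+\delta\}|\to0$ for every $\delta>0$, hence $0\le u_\infty,\overline{u_\infty^2}\le1$ a.e.; together with the (strong) limit of the $c_m$-equation and trace continuity, this gives the second and fourth lines of \eqref{hss1}.

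\emph{Step 2 (strong convergence of the pressure).} Following \cite{HLP2023,noemi2023,LX2021}, the bounds from Step 1 combined with the $u_m$-equation upgrade the weak convergence of $\nabla P_m$ to the strong convergence \eqref{ss2}; in particular $P_m\to P_\infty$ and $\Delta P_m\to\Delta P_\infty$ strongly in $L^2(Q_T)$, and $\nabla P_\infty\cdot\vec{n}=0$ on $\Gamma_T$ by trace continuity. This step is technically delicate — one must control the stiff coefficient $(m-1)$ in \eqref{pe} — but it is not the essential new difficulty.

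\emph{Step 3 (strong limit of the density on $\{P_\infty>0\}$).} First, the elementary estimates $\max_{0\le s\le1}s^{m-1}(1-s)=\tfrac1m\bigl(1-\tfrac1m\bigr)^{m-1}\le\tfrac1m$ and $s^{m-1}(1-s)\le0$ for $s\ge1$ give $\int_{Q_T}u_m^{m-1}(1-u_m)\le\tfrac1m|Q_T|$, hence $\limsup_m\int_{Q_T}P_m(1-u_m)=\limsup_m\tfrac{m}{m-1}\int_{Q_T}u_m^{m-1}(1-u_m)\le0$. Letting $m\to\infty$ with $P_m\to P_\infty$ in $L^1(Q_T)$ (Step 2) and $u_m\rightharpoonup u_\infty$ weak-$*$, and using $P_\infty\ge0$, $u_\infty\le1$, we obtain $\int_{Q_T}P_\infty(1-u_\infty)=0$, i.e. $P_\infty(1-u_\infty)=0$ a.e.; since in addition $\nabla P_\infty=0$ a.e. on $\{P_\infty=0\}$ while $u_\infty=1$ on $\{P_\infty>0\}$, also $\nabla P_\infty(1-u_\infty)=0$, which is the third line of \eqref{hss1}. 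Next, since $\sqrt{P_\infty}\in L^2(Q_T)$ is a fixed weight and $u_m\rightharpoonup u_\infty$, $u_m^2\rightharpoonup\overline{u_\infty^2}$ weak-$*$,
\begin{multline*}
\lim_{m\to\infty}\bigl\|u_m\sqrt{P_\infty}-\sqrt{P_\infty}\bigr\|_{L^2(Q_T)}^2
=\int_{Q_T}P_\infty\bigl(\overline{u_\infty^2}-2u_\infty+1\bigr)\\
=\int_{Q_T}P_\infty\bigl(\overline{u_\infty^2}-1\bigr)\le0,
\end{multline*}
where we used $P_\infty u_\infty=P_\infty$ and $\overline{u_\infty^2}\le1$; as the left-hand side is nonnegative it vanishes, which proves \eqref{ss3} and in addition gives $\overline{u_\infty^2}=u_\infty=1$ a.e. on $\{P_\infty>0\}$.

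\emph{Step 4 (limiting system and complementarity relation).} In the weak form of the $u_m$-equation, $\nabla u_m^m=u_m\nabla P_m$ is bounded in $L^2(Q_T)$ while $u_m^m=\tfrac{m-1}{m}u_mP_m\rightharpoonup u_\infty P_\infty=P_\infty$ weakly in $L^2(Q_T)$, so $\nabla u_m^m\rightharpoonup\nabla P_\infty$ weakly in $L^2(Q_T)$; moreover $u_m(K-u_m)\nabla c_m\rightharpoonup(Ku_\infty-\overline{u_\infty^2})\nabla c_\infty$ by weak-$*$ times strong convergence; passing to the limit, using \eqref{a3} for the initial term, gives the first line of \eqref{hss1}, and the $c$-equation passes trivially. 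For the complementarity relation, divide \eqref{pe} by $m-1$: the terms $\tfrac1{m-1}\partial_t P_m$ and $\tfrac1{m-1}\nabla P_m\cdot\bigl(\nabla P_m+(2u_m-K)\nabla c_m\bigr)$ tend to $0$ in $\mathcal{D}'(Q_T)$ and $L^1(Q_T)$ respectively, while the strong limits of $P_m$, $\Delta P_m$, $c_m$ together with \eqref{ss3} give $P_m\bigl[\Delta P_m+(K-u_m)(u_m-c_m)\bigr]\rightharpoonup P_\infty\bigl[\Delta P_\infty+(K-1)(1-c_\infty)\bigr]$ — on $\{P_\infty>0\}$ the weight $P_m$ lets one replace $u_m,u_m^2$ by $1$ — which is exactly \eqref{comrela}. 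The main obstacle is Step 3: producing the \emph{pointwise} identities $P_\infty(1-u_\infty)=0$ and $\overline{u_\infty^2}=1$ on $\{P_\infty>0\}$, and hence \eqref{ss3}, \emph{without} any regularity estimate on the density — which is unavailable because of the aggregation term; Step 2 is the other technically demanding point, but it proceeds along the route already developed in \cite{HLP2023,noemi2023,LX2021}.
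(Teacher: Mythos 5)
There is a genuine circularity between your Steps 2 and 3. Your Step 3 --- the heart of the argument --- concludes $\int_{Q_T}P_\infty(1-u_\infty)\,dxdt=0$ by passing to the limit in $\int_{Q_T}P_m(1-u_m)\,dxdt$, and for this you explicitly invoke $P_m\to P_\infty$ strongly in $L^1(Q_T)$ ``from Step 2'' in order to identify $\lim_m\iint_{Q_T} P_mu_m$ with $\iint_{Q_T} P_\infty u_\infty$. Note that $\iint_{Q_T} P_m(1-u_m)=\iint_{Q_T} P_m-\tfrac{m}{m-1}\iint_{Q_T} u_m^m\to0$ automatically from \eqref{pwc}, so your $\limsup$ bound via $s^{m-1}(1-s)\le 1/m$ carries no extra information: the entire content of the step is the identification of the weak limit of the product $P_mu_m$ as $P_\infty u_\infty$. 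But Step 2, carried out ``along the route of \cite{HLP2023,noemi2023,LX2021}'', is exactly the paper's Lemma~\ref{pressure}: one multiplies the equation by $u_m^m-P_\infty$, and the limit of the right-hand side vanishes only because of the identities $u_\infty\nabla P_\infty=\nabla P_\infty$, $\overline{u_\infty^2}\nabla P_\infty=\nabla P_\infty$ and $\partial_tu_\infty\,P_\infty=0$ --- precisely the conclusions of your Step 3 (plus the time-derivative identity, which you never establish). No independent route to strong (or even time-compact) convergence of $P_m$ exists here, since $\partial_tP_m$ carries the stiff factor $m-1$ and the paper deliberately avoids Aronson--B\'enilan. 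The paper breaks the circle by proving $u_\infty P_\infty=P_\infty$ \emph{before} any strong pressure convergence: $\partial_tu_m$ is bounded in $L^2(0,T;\dot H^{-1}(\Omega))$, so Aubin--Lions gives $u_m\to u_\infty$ strongly in $L^2(0,T;\dot H^{-1}(\Omega))$, which pairs with the merely weak convergence $P_m\rightharpoonup P_\infty$ in $L^2(0,T;H^1(\Omega))$ to give $\lim_m\iint u_mP_m=\iint u_\infty P_\infty$; comparing with $u_mP_m=\tfrac{m}{m-1}u_m^m\rightharpoonup P_\infty$ yields the identity. You need this (or an equivalent) device and a reordering: the identities of Lemma~\ref{cr} first, then the weighted convergence \eqref{ss3}, and only then the strong convergence of $\nabla u_m^m$ and $\nabla P_m$.

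A second, smaller error: the claim that $\Delta P_m\to\Delta P_\infty$ strongly in $L^2(Q_T)$ is unjustified and false in general --- no uniform bound on $\Delta P_m$ is available, and $\Delta P_\infty$ is in general only a measure on the support of $P_\infty$ (cf.\ the paper's appendix). Consequently your Step 4 cannot pass to the limit in $P_m\Delta P_m$ as a product of strong limits. The correct treatment tests the pressure equation \eqref{pe} against $\varphi$ and writes $\iint P_m\Delta P_m\varphi=-\iint|\nabla P_m|^2\varphi-\iint P_m\nabla P_m\cdot\nabla\varphi$, after which the strong $L^2(Q_T)$ convergence of $\nabla P_m$ and the weak convergence of $P_m$ suffice. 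Apart from these two points your outline matches the paper's strategy, and your direct computation $\lim_m\|u_m\sqrt{P_\infty}-\sqrt{P_\infty}\|_{L^2(Q_T)}^2=\iint P_\infty(\overline{u_\infty^2}-1)\le0$ is a clean shortcut to \eqref{ss3} once $P_\infty u_\infty=P_\infty$ and $\overline{u_\infty^2}\le 1$ are in hand.
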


Specifically, for $K\leq1$, we show that the limiting system is a hyperbolic Keller-Segel system (cf.\cite{bp2009}).

\begin{theorem}[Hyperbolic Keller-Segel]\label{hks} Under the  assumptions \eqref{a1}-\eqref{a3}, 
for $K\leq 1$, upon the extraction of a subsequence, the stiff pressure effect vanishes, that is
\begin{equation}\label{pressurev}\nabla P_\infty=0\ \text{ for }K=1,\quad P_\infty=0\ \text{ for }K<1,\end{equation}
\begin{equation}\label{ds}
u_m\to u_\infty, \quad\text{strongly in }L^p(Q_T) \text{ with }p\in[1,\infty),\quad \text{ as }m\to\infty.
\end{equation}
Moreover, the limiting function $(u_\infty,c_\infty)$ solves the following hyperbolic Keller-Segel system with conservation law
\begin{equation}\label{hss2}
\begin{cases}
\partial_tu_\infty=-\nabla\cdot(u_\infty(K-u_\infty)\nabla c_\infty),\quad &\text{ in }\mathcal{D}'(Q_T),\\
-\Delta c_\infty+c_\infty=u_\infty,\quad 0\leq u_\infty\leq K,\quad &\text{ a.e. in }Q_T,\\
\nabla c_\infty\cdot\vec{n}=0,\ &\text{ on }\Gamma_T.
\end{cases}
\end{equation}
\end{theorem}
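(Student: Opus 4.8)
The plan is to prove the three assertions of the theorem in order: first that the stiff pressure effect disappears, i.e.\ \eqref{pressurev} (and hence $\Delta P_\infty=0$); then the strong convergence \eqref{ds} of the density, obtained through a \emph{kinetic formulation}; and finally that the limit pair solves \eqref{hss2}, by passing to the limit in the weak formulation. Throughout I use, without re-deriving them, all the convergences and the limiting system of Theorem~\ref{crelation}, the complementarity relation \eqref{comrela}, and the bound $0\le u_m\le K$.

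\textbf{Step 1 (vanishing pressure).} When $0<K<1$ the claim is elementary: by the pressure law $0\le P_m=\tfrac{m}{m-1}u_m^{m-1}\le\tfrac{m}{m-1}K^{m-1}\to0$, so $\|P_m\|_{L^\infty(Q_T)}\to0$ and $P_\infty=0$. When $K=1$ I use \eqref{comrela}: since $\nabla P_m\to\nabla P_\infty$ in $L^2(0,T;H^1(\Omega))$ we have $P_\infty(t)\in H^2(\Omega)$ for a.e.\ $t$ and $P_\infty\Delta P_\infty\in L^1(Q_T)$, so \eqref{comrela} reads $P_\infty\Delta P_\infty=0$ a.e.; integrating over $Q_T$ and integrating by parts in $x$, using the no-flux condition $\nabla P_\infty\cdot\vec n=0$ on $\Gamma_T$ from \eqref{hss1}, gives $\iint_{Q_T}|\nabla P_\infty|^2\,dxdt=0$, hence $\nabla P_\infty=0$. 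In both cases $\nabla P_\infty=0$, and since $\nabla u_m^m=u_m\nabla P_m$ (a direct consequence of the pressure law), we get $\nabla u_m^m\to0$ strongly in $L^2(Q_T)$: the porous-medium diffusion flux degenerates.

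\textbf{Step 2 (strong convergence of the density).} By Step 1, \eqref{hss1} already reads $\partial_t u_\infty=-\nabla\cdot\big((Ku_\infty-\overline{u_\infty^2})\nabla c_\infty\big)$, so \eqref{hss2} will follow once $\overline{u_\infty^2}=u_\infty^2$, equivalently once $u_m\to u_\infty$ strongly in $L^2(Q_T)$ (hence in every $L^p(Q_T)$, $p<\infty$, by $0\le u_m\le K$). For this I invoke the kinetic formulation. Setting $f_m(x,t,\xi)=\ind{0<\xi<u_m(x,t)}$ one has $u_m=\int_0^K f_m\,d\xi$, $u_m^2=2\int_0^K\xi f_m\,d\xi$, $0\le f_m\le1$, and $\xi\mapsto f_m(\cdot,\cdot,\xi)$ non-increasing; testing the $u_m$-equation against $S'(\xi)$ and rearranging as in \cite{bp2002,bp2009}, $f_m$ satisfies in $\mathcal D'$ a kinetic equation of the form
\[\partial_t f_m+(K-2\xi)\,\nabla c_m\cdot\nabla_x f_m=\partial_\xi\big(m_m+n_m\big)+\big(\text{zeroth-order terms in }\Delta c_m\big),\]
with kinetic velocity $a(\xi)=K-2\xi$, entropy defect measure $m_m\ge0$, and parabolic defect measure $n_m\ge0$ generated by $\Delta u_m^m$. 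The uniform estimates of Theorem~\ref{crelation} — $\nabla P_m$ bounded in $L^2(0,T;H^1(\Omega))$, $\nabla c_m$ bounded in $L^\infty(0,T;W^{1,p}(\Omega))$, $\Delta c_m=c_m-u_m$ bounded — keep the right-hand side bounded in a fixed space uniformly in $m$, the parabolic part being asymptotically negligible since $\nabla u_m^m=u_m\nabla P_m\to0$ in $L^2(Q_T)$ by Step 1. Because $a'(\xi)\equiv-2\ne0$ (non-degeneracy of the kinetic velocity) and $\nabla c_m\to\nabla c_\infty$ strongly, a velocity-averaging lemma shows that $\int_0^K f_m\,\psi(\xi)\,d\xi$ is relatively compact in $L^p(Q_T)$ for every $\psi\in C([0,K])$. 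Taking $\psi\equiv1$ gives $u_m\to u_\infty$ strongly in $L^p(Q_T)$, so $u_m^2\to u_\infty^2$ in $L^p(Q_T)$ and $\overline{u_\infty^2}=u_\infty^2$. (Equivalently, one may pass to the limit directly in the kinetic equation and check that the sign of the defect measures and the monotonicity in $\xi$ are preserved in the limit, forcing $f_\infty=\ind{0<\xi<u_\infty}$ and again $\overline{u_\infty^2}=2\int_0^K\xi f_\infty\,d\xi=u_\infty^2$.)

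\textbf{Step 3 (passing to the limit) and the main obstacle.} In the weak formulation of \eqref{deq} one writes $\nabla u_m^m=u_m\nabla P_m$ and $u_m(K-u_m)\nabla c_m$; by Step 1 the first flux tends to $0$ in $L^2(Q_T)$, by Step 2 together with \eqref{ss1} the second tends to $u_\infty(K-u_\infty)\nabla c_\infty$ in $L^1(Q_T)$, and $u_{m,0}\to u_{\infty,0}$ in $L^1(\Omega)$; letting $m\to\infty$ yields exactly the first line of \eqref{hss2}, while $-\Delta c_\infty+c_\infty=u_\infty$, the bound $0\le u_\infty\le K$ and the no-flux condition for $c_\infty$ are already contained in \eqref{hss1}. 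The genuinely delicate point is Step 2: writing the kinetic equation for $f_m$ and, above all, controlling its right-hand side \emph{uniformly in} $m$, since both the degenerate diffusion $\Delta u_m^m$ and the nonlinear chemotactic flux $\nabla\cdot\big((Ku_m-u_m^2)\nabla c_m\big)$ produce $\partial_\xi$ of measures that must be bounded in a topology fine enough for the averaging lemma while the parabolic part vanishes as $m\to\infty$; it is precisely the vanishing of the stiff pressure, $\Delta P_\infty=0$ from Step 1, that renders this feasible. A second difficulty is that the kinetic velocity $(K-2\xi)\nabla c_m$ is only as regular as $\nabla c_m\in L^\infty(0,T;W^{1,p}(\Omega))$, so the averaging/compactness step must tolerate this limited regularity of the coefficient, for instance by localisation and coefficient-freezing, or via the $L^1$-type compactness available for the kinetic transport equation.
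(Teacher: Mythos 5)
Your Steps 1 and 3 coincide with the paper's argument (Proposition~\ref{complementarityr} gives \eqref{pressurev} exactly as you do, and \eqref{hss2} follows from \eqref{hss1} once $\overline{u_\infty^2}=u_\infty^2$). The genuine gap is in Step~2. The velocity-averaging route fails: the kinetic velocity is not $a(\xi)=K-2\xi$ but the full coefficient $g'(\xi)\nabla c_m(x,t)=(K-2\xi)\nabla c_m(x,t)$, and the non-degeneracy needed for an averaging lemma concerns the symbol $\tau+(K-2\xi)\,\nabla c\cdot k$, not $a'(\xi)$. Wherever $\nabla c_m$ vanishes (interior critical points of $c_m$ are unavoidable under homogeneous Neumann conditions) or is orthogonal to the spatial frequency $k$, this symbol is independent of $\xi$ and no compactness in $x$ can be extracted from the transport; localisation and coefficient-freezing cure roughness of the coefficient, not its degeneracy. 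This is precisely why \cite{bp2009}, and the paper, do not use averaging.

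The mechanism the paper actually uses, which your parenthetical remark only gestures at, is a rigidity argument: pass to the limit in the kinetic formulation to obtain \eqref{pass} (the $\delta_{\xi=1}\Delta P_\infty$ term being killed by Step~1), and then prove $f_\infty(1-f_\infty)=0$ by a Gr\"onwall inequality for $\iint F_\epsilon\,dx\,d\xi$ with $F_\epsilon=f_{\infty,\epsilon}(1-f_{\infty,\epsilon})$. Two ingredients are indispensable and missing from your proposal: (i) the weak limit $\rho_\infty$ of $u_mf_m$ is \emph{not} $u_\infty f_\infty$, and the resulting term $g'(\xi)(\rho_\infty-u_\infty f_\infty)$ can only be absorbed into the Gr\"onwall constant through the pointwise bound $|\rho_\infty-u_\infty f_\infty|\le C f_\infty(1-f_\infty)$ of Lemma~\ref{proved} (\cite[Lemma 3.1]{bp2009}); (ii) the initial layer: $F_\epsilon(t=0)\to0$ requires the strong convergence of the initial data, i.e.\ assumption \eqref{a3}, which your Step~2 never invokes. ``Sign of the defect measures and monotonicity in $\xi$'' alone do not force $f_\infty\in\{0,1\}$. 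A smaller inaccuracy: your claim that the parabolic defect measure is asymptotically negligible because $\nabla u_m^m\to0$ in $L^2(Q_T)$ is unjustified, since $\nabla u_m\cdot\nabla u_m^m=\tfrac{4m}{(m+1)^2}|\nabla u_m^{(m+1)/2}|^2$ is not controlled by $\|\nabla u_m^m\|_{L^2}^2$; the paper keeps $M_\infty\ge0$ in the limit and uses only its sign, which suffices.
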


\begin{remark}When $K\leq 1$, it should be noticed that the incompressible limit of the porous medium type Keller-Segel system~\eqref{deq} is no longer a Hele-Shaw problem. This differs from previous works (see~e.g., \cite{PERTHAME2014,DAVID2021,BPPS2020,KM2023,HLP2022,CKY2018,HLP2023,LX2021}), in which the incompressible limit is somehow equivalent to  the Hele-Shaw limit.
\end{remark}

\paragraph{Outline of the paper.}\rm  
We prove Theorem~\ref{crelation}  in Section~\ref{slcr}, and Theorem~\ref{hks} is verified in  Section \ref{std}.

\section{Strong limit of pressure and complementarity relation}\label{slcr}
We first state the global existence of weak solutions of \eqref{deq}.
\begin{theorem}[Existence of global weak solutions]
Assume that the initial data $u_0$ satisfies \eqref{a1}. Then, for any $T>0$ and $m>1$, the initial boundary value  problem~\eqref{deq} and \eqref{ini}  admits at least one weak solution $(u_m,c_m)$ satisfying \[0\leq u_m,c_m\leq K,\quad\text{a.e. in }Q_T,\]\[u_m^m\in L^2(0,T;H^1(\Omega)),\quad c_m\in L^\infty(0,T;H^1\cap W^{2,p}(\Omega))\text{ with }p\in[1,\infty).\]\end{theorem}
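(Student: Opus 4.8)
The plan is the classical two‑level scheme for degenerate parabolic problems: one regularises the diffusion so that it becomes uniformly parabolic, solves the regularised system by a fixed‑point argument, derives a priori estimates uniform in the regularisation parameter $\e$, and passes to the limit $\e\to0$. For the first level I would proceed as follows. Fix $m>1$. For $\e\in(0,1)$ choose $b_\e\in \mathcal{C}^1(\R)$ nondecreasing with $0\le b_\e'\le mK^{m-1}$, $b_\e(s)=s^m$ on $[0,K]$, $b_\e(s)=0$ for $s\le0$ and $b_\e$ affine for $s\ge K$; set $\beta_\e(s):=\e s+b_\e(s)$, so $\e\le\beta_\e'\le\e+mK^{m-1}$, and replace the chemotactic coefficient by the bounded, globally Lipschitz $g$ with $g(s)=s(K-s)$ on $[0,K]$ and $g(s)=0$ elsewhere. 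One then solves
\[\p_tu_\e=\Delta\beta_\e(u_\e)-\nabla\cdot(g(u_\e)\nabla c_\e),\qquad -\Delta c_\e+c_\e=u_\e,\qquad \nabla u_\e\cdot\vec{n}=\nabla c_\e\cdot\vec{n}=0,\]
with smooth initial datum $0\le u_{\e,0}\le K$, $u_{\e,0}\to u_{m,0}$ in $L^1(\Omega)$, via a Schaefer/Leray--Schauder argument: for $\tilde u\in L^2(Q_T)$, $\sigma\in[0,1]$, let $c=c_\sigma[\tilde u]$ solve $-\Delta c+c=\sigma\tilde u$ and let $u=\Phi_\sigma(\tilde u)$ solve $\p_tu=\Delta\beta_\e(u)-\nabla\cdot(g(\tilde u)\nabla c_\sigma[\tilde u])$, which is well posed since $\beta_\e$ is increasing and bi‑Lipschitz and the source is in divergence form; $\Phi_\sigma$ maps bounded sets of $L^2(Q_T)$ into sets bounded in $L^2(0,T;H^1)$ with $\p_t(\cdot)$ bounded in $L^2(0,T;(H^1)')$, hence (Aubin--Lions) into precompact subsets of $L^2(Q_T)$, and is continuous; the a priori bound required to conclude is exactly the $L^\infty$ bound of the next paragraph.

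The decisive estimate is the uniform bound $0\le u_\e\le K$. Testing the $\e$‑equation with $(u_\e-K)^+$, the diffusive term contributes $-\int_{\{u_\e>K\}}\beta_\e'(u_\e)|\nabla u_\e|^2\le0$ and the chemotactic term contributes $\int_{\{u_\e>K\}}g(u_\e)\,\nabla c_\e\cdot\nabla u_\e=0$, because $g\equiv0$ on $\{u_\e>K\}$; since $(u_{\e,0}-K)^+=0$ we get $u_\e\le K$, and testing with $\min(u_\e,0)$ together with $g\equiv0$ on $\{u_\e<0\}$ gives $u_\e\ge0$. The same computation applies to any fixed point of $\Phi_\sigma$, which furnishes the $\sigma$‑independent bound used above. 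Granted $0\le u_\e\le K$, the elliptic maximum principle gives $0\le c_\e\le K$, and $L^p$‑elliptic regularity gives $c_\e$ bounded in $L^\infty(0,T;W^{2,p}(\Omega))$ with constants depending only on $K,\Omega,p$. Testing the $u_\e$‑equation with $u_\e$ and rewriting the drift term as $\int\nabla c_\e\cdot\nabla\Psi(u_\e)=\int(u_\e-c_\e)\Psi(u_\e)$ with $\Psi'=g$ (bounded on $[0,K]$) then yields
\[\sup_{t\in[0,T]}\|u_\e(t)\|_{L^2(\Omega)}^2+\e\iint_{Q_T}|\nabla u_\e|^2\,dxdt+\iint_{Q_T}\big|\nabla u_\e^{(m+1)/2}\big|^2\,dxdt\le C(T,m);\]
using $0\le u_\e\le K$ in $\nabla u_\e^m=\tfrac{2m}{m+1}u_\e^{(m-1)/2}\nabla u_\e^{(m+1)/2}$ makes $u_\e^m$ bounded in $L^2(0,T;H^1(\Omega))$, and the equation then gives $\p_tu_\e$ bounded in $L^2(0,T;(H^1(\Omega))')$, all uniformly in $\e$.

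For the limit $\e\to0$ I would note that the uniform bounds on $\nabla u_\e^m$ in $L^2$ and on $\p_tu_\e$ in $L^2(0,T;(H^1)')$, together with the uniform H\"older continuity of $s\mapsto s^{1/m}$ on $[0,K^m]$ (which transfers space‑translation control from $u_\e^m$ to $u_\e$), imply by the Riesz--Fr\'echet--Kolmogorov/Simon criterion that, along a subsequence, $u_\e\to u_m$ strongly in $L^p(Q_T)$ for every $p<\infty$. Hence $u_\e^m\to u_m^m$ strongly in $L^2$ with $\nabla u_\e^m\rightharpoonup\nabla u_m^m$ weakly in $L^2$; $c_\e\to c_m$ strongly in $L^2(0,T;H^2(\Omega))$ by linear elliptic estimates, so $g(u_\e)\nabla c_\e\to u_m(K-u_m)\nabla c_m$ strongly in $L^1(Q_T)$ while remaining bounded in $L^2$; and $\e\nabla u_\e\to0$ in $L^2$. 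Passing to the limit in the weak formulation shows $(u_m,c_m)$ is a weak solution; the bounds $0\le u_m,c_m\le K$, $u_m^m\in L^2(0,T;H^1)$ and $c_m\in L^\infty(0,T;H^1\cap W^{2,p})$ are inherited, and $u_m\in \mathcal{C}(0,T;L^1(\Omega))$ follows from $u_m\in L^\infty(Q_T)$ with $\p_tu_m\in L^2(0,T;(H^1)')$.

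I expect no deep obstacle here; the construction is essentially standard and close to \cite{bpmz2024,HLP2022}. The step that demands genuine care is the $L^\infty$ bound: the truncations must be arranged so that $[0,K]$ is propagated \emph{without} circular reasoning, since there is no scalar parabolic maximum principle available because of the nonlocal coupling through $c$. It is precisely the degeneracy of the chemotactic flux at both $u=0$ and $u=K$ that confines the density, and only once this confinement is secured does the drift $u(K-u)\nabla c$ become merely bounded, so that all the energy estimates close.
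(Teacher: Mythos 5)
Your construction is correct and is essentially the argument the paper defers to: the paper's own proof consists only of invoking the comparison principle for $0\le u_m,c_m\le K$ and citing \cite{bertozzi2010existence} for the standard regularization, fixed-point and compactness machinery, which is precisely what you have written out. Your truncation of the chemotactic coefficient outside $[0,K]$, so that the bound $0\le u_\e\le K$ is obtained by Stampacchia testing without any circularity through the nonlocal field $c_\e$, is the one point the paper glosses over and is handled correctly.
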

\begin{proof}It is clear that the comparison principle gives a priori $0\leq u_m,c_m\leq K$. Then the proof of the global existence of the weak solution becomes standard. One can refer to \cite{bertozzi2010existence} for the detailed proof procedures.$\hfill\square$ 
\end{proof}
\vspace{2mm}

We use the regularity theory for the degenerate parabolic equation and  the elliptic equation, and obtain the elementary estimates as follows:
\begin{lemma}[Elementary estimates]\label{cms}
Let $(u_m, c_m)$ be the weak solution of the initial boundary value problem \eqref{deq} and \eqref{ini} under the initial assumption \eqref{a1}, it follows  
\begin{equation}\label{e1}
0\leq u_m(x,t),c_m(x,t)\leq K,\quad x\in\Omega,\quad\forall t\ \ge0,
\end{equation}
\begin{equation}\label{e3}
\|c_m\|_{L^\infty(0,\infty;W^{2,q}(\Omega))}\leq C\quad \text{for }1\leq q<\infty,
\end{equation}
\begin{equation}\label{e4}
\|\partial_t c_m\|_{L^2((0,\infty);H^{1}(\Omega))}\leq C.
\end{equation}
Moreover, there exist $u_\infty,\ \overline{u_\infty^2}\in L^\infty((0,\infty)\times\Omega)$ and $c_\infty\in L^\infty(0,\infty;W^{2,q}(\Omega))$ with $1\leq q<\infty$  such that, for any $T>0$, upon the extraction of a subsequence, as $m\to\infty$, it holds
\begin{equation}\label{e5}
u_m,\ u_m^2\rightharpoonup u_\infty,\ \overline{u_\infty^2},\quad *-\text{weak in } L^\infty(Q_T),
\end{equation}
\begin{equation}\label{e6}
c_m\to c_\infty,\quad \text{strongly in }L^p(0,T;W^{1,p}(\Omega))\text{ for }1\leq p<\infty.
\end{equation}
\end{lemma}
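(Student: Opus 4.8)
The plan is to establish \eqref{e1}--\eqref{e6} in order; everything but one uniform-in-$m$ energy estimate (which is what feeds \eqref{e4}) is soft compactness or standard elliptic regularity. First, \eqref{e1} is the comparison principle already invoked for existence: on $[0,K]$ the coefficient $u(K-u)$ is nonnegative, so the constants $0$ and $K$ are a sub- and a super-solution of the $u_m$-equation (using $0\le c_m\le K$), whence $0\le u_m\le K$; then $0\le c_m\le K$ follows from $-\Delta c_m+c_m=u_m$ and the elliptic maximum principle. Since $\Omega$ is bounded, \eqref{e1} gives $\|u_m(\cdot,t)\|_{L^q(\Omega)}\le K|\Omega|^{1/q}$ uniformly in $m,t$, and the Calder\'on--Zygmund $W^{2,q}$ estimate for $-\Delta c_m+c_m=u_m$ with Neumann data yields \eqref{e3}; taking $q>n$ and using $W^{2,q}(\Omega)\hookrightarrow W^{1,\infty}(\Omega)$ I also record $\|\nabla c_m\|_{L^\infty(Q_T)}\le C$, used below.

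The core step is the energy estimate. I would formally test the $u_m$-equation with $P_m$ and use the pointwise identity $\nabla u_m^m=u_m\nabla P_m$ (immediate from $P_m=\tfrac{m}{m-1}u_m^{m-1}$) to obtain
\[
\frac{d}{dt}\int_\Omega\frac{u_m^m}{m-1}\,dx+\int_\Omega u_m|\nabla P_m|^2\,dx=\int_\Omega u_m(K-u_m)\,\nabla c_m\cdot\nabla P_m\,dx .
\]
Young's inequality absorbs half of the dissipation and, with \eqref{e1} and $\|\nabla c_m\|_{L^\infty(Q_T)}\le C$ bounding the remainder, gives $\frac{d}{dt}\int_\Omega\frac{u_m^m}{m-1}+\frac12\int_\Omega u_m|\nabla P_m|^2\le C$. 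Since $\int_\Omega\frac{u_{m,0}^m}{m-1}\,dx=\frac1m\int_\Omega u_{m,0}P_{m,0}\,dx\le\frac Km\|P_{m,0}\|_{L^1(\Omega)}$ is bounded (indeed tends to $0$) by \eqref{a1}--\eqref{a2}, integrating in time yields $\iint_{Q_T}u_m|\nabla P_m|^2\le C(T)$, hence, using $u_m^2\le Ku_m$,
\[
\iint_{Q_T}|\nabla u_m^m|^2\,dxdt=\iint_{Q_T}u_m^2|\nabla P_m|^2\,dxdt\le K\,C(T).
\]
This manipulation is to be justified rigorously on the approximating problems used to construct the weak solution (e.g.\ non-degenerate diffusion $\Delta(u_m^m+\varepsilon u_m)$, cf.\ \cite{bertozzi2010existence}), the bound then passing to the limit. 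Writing the $u_m$-equation as $\partial_t u_m=\nabla\cdot F_m$ with $F_m:=\nabla u_m^m-u_m(K-u_m)\nabla c_m$, the above with \eqref{e1} and \eqref{e3} gives $\|F_m\|_{L^2(Q_T)}\le C(T)$. Differentiating $-\Delta c_m+c_m=u_m$ in time gives $-\Delta\partial_t c_m+\partial_t c_m=\nabla\cdot F_m$ with $\nabla\partial_t c_m\cdot\vec{n}=0$; testing with $\partial_t c_m$ (the boundary term vanishes since $F_m\cdot\vec{n}=0$ by the Neumann conditions) gives $\|\partial_t c_m\|_{H^1(\Omega)}^2=-\int_\Omega F_m\cdot\nabla\partial_t c_m\le\|F_m\|_{L^2(\Omega)}\|\partial_t c_m\|_{H^1(\Omega)}$, hence $\|\partial_t c_m\|_{L^2(0,T;H^1(\Omega))}\le\|F_m\|_{L^2(Q_T)}\le C(T)$, which is \eqref{e4}.

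Finally, \eqref{e5} is weak-$*$ compactness: $\{u_m\}$ and $\{u_m^2\}$ are bounded in $L^\infty(Q_T)$ by $K$ and $K^2$, so along a subsequence they converge weak-$*$ to some $u_\infty,\overline{u_\infty^2}\in L^\infty(Q_T)$, the bar recording that this limit need not equal $u_\infty^2$ --- strong convergence of $u_m$ is not yet available, and recovering it (on $\{P_\infty>0\}$) is the crux of the later argument. For \eqref{e6} I would combine $\|c_m\|_{L^\infty(0,T;W^{2,q}(\Omega))}\le C$ (with $q$ large) and \eqref{e4} with the Aubin--Lions--Simon lemma along $W^{2,q}(\Omega)\hookrightarrow\hookrightarrow W^{1,p'}(\Omega)\hookrightarrow H^1(\Omega)$, $p'=\max(p,2)$, the first embedding compact (Rellich--Kondrachov) for $q$ large relative to $n$ and $p$ and the second continuous on the bounded domain $\Omega$; this yields relative compactness of $\{c_m\}$ in $L^p(0,T;W^{1,p}(\Omega))$ and, after identifying the limit with $c_\infty$, \eqref{e6}.

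I expect the main obstacle to be the energy estimate. The dissipation $\int_\Omega u_m|\nabla P_m|^2$ is degenerate where $u_m=0$, so one cannot hope to bound $\|\nabla P_m\|_{L^2}$ alone; the saving feature is that only the weighted quantity $u_m|\nabla P_m|^2$, equivalently $|\nabla u_m^m|^2$, is needed for $F_m$, and that is exactly what the estimate delivers. Equally essential is that the uniform-in-$m$ control of the right-hand side of the integrated identity rests on assumption \eqref{a2} on $\|P_{m,0}\|_{L^1}$ rather than on $u_{m,0}$ alone: when $K>1$ the quantity $u_{m,0}^m$ is not dominated by $u_{m,0}$, and it is precisely $\frac1m\|u_{m,0}P_{m,0}\|_{L^1}\to0$ that keeps $\int_\Omega\frac{u_{m,0}^m}{m-1}\,dx$ bounded.
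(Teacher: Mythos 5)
Your proposal is correct, and for \eqref{e1}, \eqref{e3}, \eqref{e5}, \eqref{e6} it coincides with the paper's argument (maximum/comparison principle, elliptic $W^{2,q}$ regularity, weak-$*$ compactness, Aubin--Lions--Simon). The one genuinely different ingredient is how you obtain the uniform bound on $\nabla\cdot F_m$ that feeds \eqref{e4}. The paper's proof of Lemma~\ref{cms} simply asserts $\partial_t u_m=\nabla\cdot F_m\in L^2(0,T;H^{-1}(\Omega))$, which as a \emph{uniform-in-$m$} statement actually rests on the bound $\|\nabla P_m\|_{L^2(Q_T)}\le C(T)$ proved only afterwards, in Lemma~\ref{be}(ii), by integrating the pressure equation \eqref{pe} over $Q_T$ and using \eqref{a2}. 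You instead test the density equation with $P_m$ and exploit $\nabla u_m^m=u_m\nabla P_m$ to get the weighted dissipation $\iint_{Q_T}u_m|\nabla P_m|^2\,dxdt\le C(T)$, again anchored on \eqref{a2} through $\int_\Omega u_{m,0}^m/(m-1)\,dx\le \tfrac{K}{m}\|P_{m,0}\|_{L^1}$. This is weaker than the paper's unweighted $\|\nabla P_m\|_{L^2(Q_T)}$ bound (which is needed later, e.g.\ for \eqref{pwc} and the strong convergence of $\nabla P_m$), but it is exactly enough for $\|\nabla u_m^m\|_{L^2(Q_T)}\le C(T)$ via $u_m^2\le K u_m$, and it makes the proof of \eqref{e4} self-contained rather than forward-referencing; your explicit duality/testing computation for $-\Delta\partial_t c_m+\partial_t c_m=\nabla\cdot F_m$, including the vanishing of the boundary terms, is a correct fleshing-out of what the paper leaves implicit. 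The only caveats, which you already flag, are that the testing with $P_m$ must be performed on a nondegenerate approximation and that the comparison-principle step is formal for the degenerate equation; both are standard.
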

\begin{proof}
The first estimate~\eqref{e1} can be obtained directly via the maximum principle, and thus \eqref{e5} holds. By the $L^p$ regularity theory \cite{gn1983} for the uniform elliptic equation~$\eqref{deq}_2$, we get~\eqref{e3}. 
To obtain \eqref{e4}, we observe from \eqref{deq}$_2$ that
\begin{equation*}
-\Delta \partial_t c_m+\partial_t c_m=\partial_t u_m=\nabla\cdot (\nabla u_m^m-u_m(K-u_m)\nabla c_m)\in L^2(0,T; H^{-1}(\Omega)),
\end{equation*}
which implies
\begin{equation*}
\partial_t c_m \in L^2(0,T;H^{1}(\Omega)).
\end{equation*}
Finally, using the Aubin-Lions-Simon lemma (Lemma~\ref{al}), \eqref{e6} is valid. $\hfill\square$ 
\end{proof}

\vspace{4mm}

We further give some a priori estimates. 
\begin{lemma}\label{be}
Assume that \eqref{a1} and \eqref{a2} hold for the initial data $(u_{m,0},P_{m,0})$ and $m\geq 2$. Then, we have 

\vspace{2mm}

\noindent(\romannumeral1)\ $\int_{\Omega}u_m(t) dx\leq \alpha|\Omega|$,\quad $0\leq t\leq T$,

\vspace{2mm}

\noindent(\romannumeral2)\ $\|\nabla P_m\|_{L^2(Q_T)}\leq C(T),\quad \|\nabla u_m^{m+k}\|_{L^2(Q_T)}\leq C(T)$ for given $k\geq 0$,

\vspace{2mm}

\noindent(\romannumeral3)\ $\|P_m\|_{L^2(Q_T)}\leq C(T),\quad \| u_m^{m+k}\|_{L^2(Q_T)}\leq C(T)$ for given $k\geq 0$,

\vspace{2mm}

\noindent(\romannumeral4)\ $\|(u_m-1)_+\|_{L^2(Q_T)}\to 0$, as $m\to\infty$, where $(u_m-1)_+:=\max\{u_m-1,0\}$.

\vspace{2mm}

\noindent Moreover, there exists $P_\infty\in L^2(0,T;H^1(\Omega))$ such that for any given $k\geq0$, up to a subsequence, it holds
\begin{equation}\label{pwc}
P_m,\ u_m^{m+k}\rightharpoonup P_\infty\quad \text{ weakly in }L^2(0,T;H^1(\Omega)),\quad \text{as }m\to\infty.
\end{equation}
 
\end{lemma}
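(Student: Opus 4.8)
The plan is to prove (i)--(iv) in the order (i), (ii), an auxiliary bound on $(u_m-1)_+$, then (iii) and (iv), and finally the weak convergence \eqref{pwc}; the one genuinely hard point is the $m$--uniform $L^2(Q_T)$ bound on $P_m$ when $K>1$. Part (i) is soft: both the diffusion term and the chemotactic flux in $\eqref{deq}_1$ are in divergence form with no-flux boundary data, so testing with $\varphi\equiv 1$ gives $\frac{d}{dt}\int_\Omega u_m\,dx=0$, hence $\int_\Omega u_m(t)\,dx=\int_\Omega u_{m,0}\,dx\le\alpha|\Omega|$ by \eqref{a1}.

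For (ii) I would work from the pressure equation \eqref{pe} rather than from $\eqref{deq}_1$. Integrating \eqref{pe} over $\Omega$, using $\int_\Omega P_m\Delta P_m=-\int_\Omega|\nabla P_m|^2$, substituting $u_m-c_m=-\Delta c_m$ in the logistic term and integrating that term by parts, the algebraic identities $\nabla u_m^m=u_m\nabla P_m=(m-1)P_m\nabla u_m$ force all the $\nabla c_m$--terms to recombine and everything collapses to the clean identity
\[\frac{d}{dt}\int_\Omega P_m\,dx+(m-2)\int_\Omega|\nabla P_m|^2\,dx=(m-2)\int_\Omega (K-u_m)\nabla c_m\cdot\nabla P_m\,dx .\]
Young's inequality absorbs half of the dissipation on the right-hand side; using $|K-u_m|\le K$ and $\|\nabla c_m\|_{L^2(Q_T)}\le C(T)$ from \eqref{e3}, then integrating in time and dividing by $m-2$ (so that the initial term is $\frac{2}{m-2}\|P_{m,0}\|_{L^1}$, bounded by \eqref{a2}), one gets $\|\nabla P_m\|_{L^2(Q_T)}\le C(T)$. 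The second bound in (ii) is then automatic, since $\nabla u_m^{m+k}=\frac{m+k}{m}u_m^{k+1}\nabla P_m$ and $0\le u_m\le K$.

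Next I would record the estimate from testing $\eqref{deq}_1$ with $(u_m-1)_+$: writing $\nabla(u_m-1)_+=\ind{u_m>1}\nabla u_m$, the diffusion term contributes $-m\int|\nabla(u_m-1)_+|^2$, and the logistic term, after Young's inequality, costs only $C(T)/m$ thanks to the large diffusion coefficient; since $\|(u_{m,0}-1)_+\|_{L^2(\Omega)}\to 0$ (a consequence of \eqref{a2}: $\int_\Omega u_{m,0}^{m-1}\le C$ forces $|\{u_{m,0}>1+\delta\}|\le C(1+\delta)^{-(m-1)}$ for every $\delta>0$), this yields $\|(u_m-1)_+\|_{L^\infty(0,T;L^2(\Omega))}^2+m\,\|\nabla(u_m-1)_+\|_{L^2(Q_T)}^2\to 0$. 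Part (iv) then follows, since by Bernoulli's inequality $t^{m-1}\ge 1+(m-1)(t-1)$ for $t\ge1$, $m\ge2$, one has $(u_m-1)_+\le\frac1{m-1}(u_m^{m-1}-1)_+\le\frac1m P_m$, so $\|(u_m-1)_+\|_{L^2(Q_T)}\le\frac1m\|P_m\|_{L^2(Q_T)}\to 0$ once (iii) is available (it is in fact already contained in the auxiliary estimate). Concerning (iii): if $K\le1$ it is trivial, since $0\le u_m\le K\le1$ gives $0\le P_m=\frac{m}{m-1}u_m^{m-1}\le2$, hence $\|P_m\|_{L^2(Q_T)}\le C(T)$ and $u_m^{m+k}=\frac{m-1}{m}u_m^{k+1}P_m\le K^{k+1}P_m$. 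If $K>1$ the pointwise bound on $P_m$ degenerates like $K^{m-1}$, and this is the step I expect to be the main obstacle: the plan is to split $P_m=\frac{m}{m-1}(\min\{u_m,1\})^{m-1}+(P_m-\frac{m}{m-1})_+$, bound the first summand by $2$ pointwise, control $\nabla(P_m-\frac{m}{m-1})_+$ by $\|\nabla P_m\|_{L^2(Q_T)}$, and then dominate the spatial average of the excess using the smallness of $(u_m-1)_+$ together with the additional structure peculiar to $K>1$ — this is presumably where the paper's hypothesis on the weak limit of $u_m^2$, and the control of $P_m$ on the contact set $\{u_\infty=1\}$, come in, and I would not expect a purely soft argument to suffice here. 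Once $\|P_m\|_{L^2(Q_T)}\le C(T)$ is known, $\|u_m^{m+k}\|_{L^2(Q_T)}\le K^{k+1}\|P_m\|_{L^2(Q_T)}\le C(T)$.

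Finally, (ii) and (iii) show that $\{P_m\}$ and each $\{u_m^{m+k}\}$ are bounded in $L^2(0,T;H^1(\Omega))$, so along a subsequence both converge weakly in that space; and since $u_m^{m+k}-P_m=\frac{m}{m-1}u_m^{m-1}\big(\frac{m-1}{m}u_m^{k+1}-1\big)$ tends to $0$ a.e.\ and in $L^1(Q_T)$ (on $\{u_m<1\}$ because $u_m^{m-1}\to0$, on $\{u_m\ge1\}$ by the smallness of $(u_m-1)_+$ from (iv)), the two weak limits coincide, which is \eqref{pwc}. The only non-routine ingredient in this plan is the $m$--uniform control of $\|P_m\|_{L^2(Q_T)}$ for $K>1$ in the third step.
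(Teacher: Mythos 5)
Your steps (i), (ii), and the Bernoulli/convexity derivation of (iv) from (iii) match the paper, and your identification of the two weak limits in \eqref{pwc} (showing $u_m^{m+k}-P_m\to 0$ in $L^1(Q_T)$ directly) is a legitimate alternative to the paper's two-sided Young-inequality comparison. But the one step you yourself flag as the main obstacle --- the $m$-uniform bound $\|P_m\|_{L^2(Q_T)}\le C(T)$ when $K>1$ --- is left genuinely open, and your guess about what closes it is off target: neither the weak limit of $u_m^2$ nor any control of $P_m$ on the contact set $\{u_\infty=1\}$ enters here (those objects are only constructed \emph{after} this lemma). The argument that actually works is soft, and it is exactly where the hypothesis $\|u_{m,0}\|_{L^1(\Omega)}\le\alpha|\Omega|$ with $\alpha<1$ is consumed: by (i) and Chebyshev, $|\{u_m(\cdot,t)\ge 1\}|\le\int_\Omega u_m\,dx\le\alpha|\Omega|$, hence $|\{u_m(\cdot,t)<1\}|\ge(1-\alpha)|\Omega|$ uniformly in $m$ and $t$. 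The excess $(u_m^{m-1}-1)_+$ vanishes identically on this set of uniformly positive measure, so the Poincar\'e--Sobolev inequality relative to a subset (Lemma~\ref{ei} with $S=\{u_m<1\}$, where the mean $u_S=0$) gives
\[
\iint_{Q_T}\bigl|(u_m^{m-1}-1)_+\bigr|^2\,dx\,dt\le C\iint_{Q_T}\bigl|\nabla(u_m^{m-1}-1)_+\bigr|^2\,dx\,dt\le C\iint_{Q_T}|\nabla u_m^{m-1}|^2\,dx\,dt\le C(T),
\]
the last bound coming from (ii). Then $\|u_m^{m-1}\|_{L^2(Q_T)}\le\|(u_m^{m-1}-1)_+\|_{L^2(Q_T)}+\|1\|_{L^2(Q_T)}\le C(T)$ and $P_m=\tfrac{m}{m-1}u_m^{m-1}\le 2u_m^{m-1}$ for $m\ge 2$, which is (iii) for every $K>0$; the bound on $u_m^{m+k}$ follows from $u_m^{m+k}\le K^{k+1}u_m^{m-1}$. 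So your decomposition of $P_m$ into a truncated part plus an excess is pointing in the right direction, but the missing ingredient is that the excess has zero mean on a set whose measure is bounded below independently of $m$ --- no structure special to $K>1$ is needed.

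Two smaller remarks. Your claim that $\|(u_{m,0}-1)_+\|_{L^2(\Omega)}\to 0$ follows from \eqref{a2}, and the parabolic energy estimate built on it, are not needed once (iii) is in hand: the paper gets (iv) in one line from $m(u_m-1)_+\le u_m^m$ (convexity), which is the same inequality as your Bernoulli bound. Also, when you divide the integrated pressure identity by $m-2$ you should note this requires $m>2$; for the lemma as stated with $m\ge 2$ one either treats $m$ near $2$ separately or observes that only large $m$ matters for the limit.
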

\begin{proof}
Thanks to the mass conservation and the initial condition~\eqref{a1}, then (\romannumeral1) holds.
In order to obtain the first estimate of (\romannumeral2), we integrate the pressure equation~\eqref{pe} on $Q_T$, from Lemma~\ref{cms}, it yields  
\begin{equation*}
\begin{aligned}
(m-2)\iint_{Q_T}|\nabla P_m|^2dxdt&=\int_{\Omega} P_{m,0}dx-\int_{\Omega}P_m(T)dx+(m-2)\iint_{Q_T}(K-u_m)\nabla c_m\cdot\nabla P_m dxdt\\
&\leq C+\frac{(m-2)}{2}\iint_{Q_T}(K-u_m)^2|\nabla c_m|^2dxdt+\frac{(m-2)}{2}\iint_{Q_T}|\nabla P_m|^2dxdt\\
&\leq C+(m-2)C+\frac{m-2}{2}\iint_{Q_T}|\nabla P_m|^2 dxdt,
\end{aligned}
\end{equation*}
then we directly come to the conclusion. The second estimate of (\romannumeral2) is given by 
\begin{equation*}
\iint_{Q_T}|\nabla u_m^{m+k}|^2dxdt\leq C K^{2k+2}\iint_{Q_T}|\nabla P_m|^2dxdt.
\end{equation*}
Since 
\[|\{u_m\geq 1\}|\leq \int_{\Omega}u_mdx\leq \alpha|\Omega|,\]
which implies 
\[|\{u_m<1\}|=|\Omega|-|\{u_m\geq 1\}|\geq (1-\alpha)|\Omega|.\]
It is immediate that
\[(u_m^{m-1}-1)_+=0,\quad\text{in }\{u_m<1\}.\]
Furthermore, we use Lemma~\ref{ei} with $S:=\{u_m<1\}$ and get 
\[
\begin{aligned}\iint_{Q_T}|(u_m^{m-1}-1)_+|^2dxdt\leq&C\iint_{Q_T}|\nabla(u_m^{m-1}-1)_+|^2dxdt\leq C\iint_{Q_T}|\nabla u_m^{m-1}|^2dxdt\\
\leq& C(T),\quad m>2.
\end{aligned}\]
Hence, we derive the first estimate of (\romannumeral3)
\begin{equation*}
\begin{aligned}
\iint_{Q_T} |u_m^{m-1}|^2dxdt\leq \iint_{Q_T} |(u_m^{m-1}-1)_++1|^2dxdt\leq& 2\iint_{Q_T} |(u_m^{m-1}-1)_+|^2dxdt+2T|\Omega|dxdt\\
\leq &C(T),
\end{aligned}
\end{equation*}
where we use $(a+b)^2\leq 2a^2+2b^2$. For the second estimate of (\romannumeral3), we can directly calculate 
\[\begin{aligned}
\iint_{Q_T}|u_m^{m+k}|^2dxdt\leq K^{2(1+k)}\iint_{Q_T}|u_m^{m-1}|^2dxdt\leq C(T).
\end{aligned}\]

By means of the convexity, we attain
\beq\label{341}
m(u_m-1)_+\leq u_m^m,\quad (x,t)\in Q_T.\eeq
Combining the estimates  (\romannumeral1)-(\romannumeral3) and integrating \eqref{341} on $Q_T$ in $L^2$ yield
\[\|(u_m-1)_+\|_{L^2(Q_T)}\leq \frac{1}{m}\|u_m^m\|_{L^2(Q_T)}\leq \frac{C(T)}{m},\]
which proves (\romannumeral4). 

On account of (\romannumeral2), there exists $P_\infty\in L^2(0,T; H^1(\Omega))$ such that after the extraction of a subsequence (still denoted as $P_m$), it holds 
\begin{equation*}
P_m\rightharpoonup P_\infty\quad \text{ weakly in }L^2(0,T;H^1(\Omega)),\quad \text{as }m\to\infty.
\end{equation*}
Let $Q_\infty\in L^2(0,T;H^1(\Omega))$ be the weak limit of $ u_m^{m+k}$ in $L^2(0,T;H^1(\Omega))$. We use Young's inequality and get 
\[P_m\leq \frac{k+1}{m+k}(\frac{m}{m-1})^{\frac{m+k}{k+1}}+\frac{m-1}{m+k}u_m^{m+k}.\]
Taking the weak limit as $m\to\infty$ on the above inequality means
\begin{equation}\label{c1}
P_\infty\leq Q_\infty.
\end{equation}
Conversely, for any $A>1$, we have 
\begin{equation}\label{cinequ}
\begin{aligned}
u_m^{m+k}\leq& \frac{m-1}{m}u_m^{k+1}P_m= \frac{m-1}{m}u_m^{k+1}[\min\{P_m,A\}+(P_m-A)_+]\\
\leq& \frac{m-1}{m}(\frac{m-1}{m}A)^{\frac{k+1}{m-1}}P_m+\frac{m-1}{m}K^{k+1}\frac{P_m^2}{A},
\end{aligned}
\end{equation}
where we use $(P_m-A)_+\leq \frac{P_m^2}{A}.$
We pass \eqref{cinequ} to limit in the distributional sense as $m\to\infty$ such that for some $C_\infty\in \mathcal{M}(Q_T)$, it holds  
\begin{equation}\label{c2}
Q_\infty\leq P_\infty+\frac{C_\infty}{A}.
\end{equation}
According to the arbitrariness of $A\geq1$, we conclude by \eqref{c1} and \eqref{c2} that 
\[Q_\infty=P_\infty,\quad\text{ a.e. in }Q_T,\]
and \eqref{pwc} holds.
\end{proof}

\vspace{4mm}
The absence of the derivative estimates of the density leads to a failure in the strong convergence of the density sequence $\{u_m\}_{m>1}$. However, in the following, we still prove that 
$u_m\sqrt{P_\infty}$ strongly converges to $\sqrt{P_\infty}$ in $L^2(Q_T)$ as $m\to\infty$, which is crucial for proving the strong convergence of the pressure. This is also
a new observation concerning the incompressible limit. 
\begin{lemma}\label{cr}
Let  $(u_\infty,\overline{u_\infty^2},c_\infty)$ be the limit of $(u_m,u_m^2,c_m)$.  Then, the limits $u_\infty, \overline{u_\infty^2}, c_\infty$ are bounded, namely
\begin{equation}\label{low1}0\leq u_\infty,\overline{u_\infty^2},c_\infty\leq 1,\quad\text{a.e. in } Q_T.\end{equation}
Furthermore, the  limits  satisfy
\begin{equation}\label{sr1}
u_\infty P_\infty=P_\infty,\quad u_\infty \nabla P_\infty=\nabla P_\infty,\quad\text{a.e. in } Q_T,
\end{equation}
\begin{equation}\label{sr2}
\overline{u_\infty^2} P_\infty=P_\infty,\quad \overline{u_\infty^2} \nabla P_\infty=\nabla P_\infty,\quad\text{a.e. in } Q_T,
\end{equation}
and 
\begin{equation}\label{dualitye}
\partial_t u_\infty P_\infty=0,\quad\text{ in }\mathcal{D}'(Q_T).
\end{equation}
Moreover,  up to the extraction of a subsequence, as $m\to\infty$, it holds
\begin{equation}\label{ums}
u_m\sqrt{P_\infty}\to \sqrt{P_\infty},\quad \text{ strongly in }L^2(Q_T),
\end{equation}
\begin{equation}\label{123}
u_m\sqrt{|\partial_iP_\infty|}\to \sqrt{|\partial_iP_\infty|},\quad i=1,...,n,\quad\text{ strongly in }L^2(Q_T).
\end{equation}
\end{lemma}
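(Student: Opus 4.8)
The heart of the statement is the pointwise identity $u_\infty P_\infty=P_\infty$ in \eqref{sr1}; once it and its analogue $\overline{u_\infty^2}P_\infty=P_\infty$ are available, the rest is bookkeeping. The bounds \eqref{low1} follow from $\|(u_m-1)_+\|_{L^2(Q_T)}\to0$ (Lemma~\ref{be}): writing $u_m\le1+(u_m-1)_+$ and $u_m^2\le1+2(u_m-1)_++(u_m-1)_+^2$, with $(u_m-1)_+^2\le(K-1)_+(u_m-1)_+\to0$ in $L^1(Q_T)$, gives $0\le u_\infty,\overline{u_\infty^2}\le1$, and then $0\le c_\infty\le1$ by comparison in $-\Delta c_\infty+c_\infty=u_\infty$. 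Next, since $P_\infty\ge0$ lies in $L^2(0,T;H^1(\Omega))$ one has $\nabla P_\infty=0$ a.e. on $\{P_\infty=0\}$, so $\{\partial_iP_\infty\ne0\}\subset\{P_\infty>0\}$ up to a null set; on the latter set $u_\infty=\overline{u_\infty^2}=1$, which is precisely the gradient part of \eqref{sr1}--\eqref{sr2}. The strong convergences \eqref{ums}, \eqref{123} then follow by expanding
\[
\iint_{Q_T}(u_m-1)^2\,w\,dxdt=\iint_{Q_T}u_m^2\,w\,dxdt-2\iint_{Q_T}u_m\,w\,dxdt+\iint_{Q_T}w\,dxdt
\]
with the fixed weight $w=P_\infty$, respectively $w=|\partial_iP_\infty|$, in $L^1(Q_T)$: passing to the weak-$*$ limit and using \eqref{sr1}--\eqref{sr2} (so that $\overline{u_\infty^2}w=u_\infty w=w$ a.e.), the right-hand side tends to $\iint w-2\iint w+\iint w=0$. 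Finally \eqref{dualitye} rests on the algebraic identity $P_m\,\partial_tu_m=\tfrac1{m-1}\partial_t(u_m^m)$, whose right side tends to $0$ in $\mathcal D'(Q_T)$ (as $u_m^m\rightharpoonup P_\infty$), together with the identification $P_m\,\partial_tu_m\rightharpoonup P_\infty\,\partial_tu_\infty$ obtained by the compactness argument below (after a mollification in time, if needed).

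\textbf{The key identity $u_\infty P_\infty=P_\infty$.} This is the only step with real content, because we have \emph{no} compactness on the sequence $\{u_m\}$ itself. The device is to trade regularity in space for regularity in time. From $\partial_tu_m=\Delta u_m^m-\nabla\cdot\big(u_m(K-u_m)\nabla c_m\big)$, Lemma~\ref{be} (bound on $\nabla u_m^m$ in $L^2$) and Lemma~\ref{cms} (bounds on $u_m,\nabla c_m$) give that $\partial_tu_m$ is bounded in $L^2(0,T;H^{-1}(\Omega))$, while $u_m$ is bounded in $L^\infty(0,T;L^2(\Omega))$. Since $L^2(\Omega)\hookrightarrow\hookrightarrow H^{-s}(\Omega)\hookrightarrow H^{-1}(\Omega)$ for any $s\in(0,1)$, the Aubin--Lions--Simon lemma (Lemma~\ref{al}) yields, up to a subsequence, $u_m\to u_\infty$ strongly in $L^2(0,T;H^{-s}(\Omega))$. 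Now fix $\phi\in\mathcal{C}_c^\infty(Q_T)$ and decompose, using $u_mP_m=\tfrac{m}{m-1}u_m^m$,
\[
\iint_{Q_T}u_m\,P_\infty\,\phi\,dxdt=\iint_{Q_T}u_m\,(P_\infty-P_m)\,\phi\,dxdt+\frac{m}{m-1}\iint_{Q_T}u_m^m\,\phi\,dxdt.
\]
The last term tends to $\iint_{Q_T} P_\infty\phi$ since $u_m^m\rightharpoonup P_\infty$ weakly in $L^2(Q_T)$ (Lemma~\ref{be}). For the first term, $(P_\infty-P_m)\phi\rightharpoonup0$ weakly in $L^2(0,T;H^1(\Omega))$, hence weakly in $L^2(0,T;H^s(\Omega))$, and since $u_m\to u_\infty$ strongly in the dual space $L^2(0,T;H^{-s}(\Omega))$, the integral — which is exactly the corresponding duality pairing — vanishes in the limit. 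On the left, $u_m\rightharpoonup u_\infty$ weak-$*$ in $L^\infty(Q_T)$ tested against $P_\infty\phi\in L^1(Q_T)$. Therefore $\iint_{Q_T}(u_\infty-1)\,P_\infty\,\phi\,dxdt=0$ for every $\phi$, i.e. $u_\infty P_\infty=P_\infty$ a.e., and with it all of \eqref{sr1}.

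\textbf{The identity $\overline{u_\infty^2}P_\infty=P_\infty$.} Here I use monotonicity rather than the product structure: for $m\ge2$ both factors of $(u_m^{m-1}-u_m)(u_m-1)$ carry the sign of $u_m-1$, so this quantity is $\ge0$ pointwise. Expanding it as $u_m^m-u_m^{m-1}-u_m^2+u_m$ and letting $m\to\infty$ against a nonnegative test function — with $u_m^{m-1}=\tfrac{m-1}{m}P_m\rightharpoonup P_\infty$, $u_m^m\rightharpoonup P_\infty$, $u_m^2\rightharpoonup\overline{u_\infty^2}$, $u_m\rightharpoonup u_\infty$ — gives $u_\infty-\overline{u_\infty^2}\ge0$ a.e. Combined with $u_\infty^2\le\overline{u_\infty^2}$ (weak-$*$ lower semicontinuity of $t\mapsto t^2$), on $\{P_\infty>0\}$ — where $u_\infty=1$ — we obtain $1\le\overline{u_\infty^2}\le1$, so $\overline{u_\infty^2}=1$ there; this yields \eqref{sr2} and, as in the first paragraph, completes \eqref{low1}, \eqref{ums}, \eqref{123}, \eqref{dualitye}.

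\textbf{Expected obstacle.} The single delicate point is $u_\infty P_\infty=P_\infty$: we have only weak-$*$ $L^\infty$ convergence of $u_m$ and weak $L^2(0,T;H^1)$ convergence of $P_m$, and the product of two weak limits is not the weak limit of the products. The resolution — observing that $\partial_tu_m$ is controlled in $L^2(0,T;H^{-1})$ even though $\nabla u_m$ is not controlled in $L^2$, hence $u_m$ converges strongly in $L^2(0,T;H^{-s}(\Omega))$, and pairing this against the weak $H^s$-in-space convergence of $(P_m-P_\infty)\phi$ — is the new observation flagged in the introduction. The remaining points ($(u_m-1)_+^2\to0$ in $L^1$, the weak-$*$ passages against the fixed weights $P_\infty,|\partial_iP_\infty|$, and the rigorous treatment of \eqref{dualitye} via the same strong--weak pairing, possibly after mollifying in time) are routine and carry no conceptual difficulty.
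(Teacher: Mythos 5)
Your overall architecture for the central identity $u_\infty P_\infty=P_\infty$ matches the paper's: both rest on the bound $\partial_t u_m\in L^2(0,T;H^{-1}(\Omega))$, Aubin--Lions compactness of $u_m$ in a negative Sobolev space, and a strong--weak duality pairing against the weakly convergent pressure (the paper pairs $u_m\to u_\infty$ strongly in $L^2(0,T;\dot H^{-1})$ against $P_m\rightharpoonup P_\infty$ in $L^2(0,T;H^1)$; your $H^{-s}/H^{s}$ variant is equivalent). Where you diverge, your routes are valid and in places cleaner: for \eqref{sr2} you use the pointwise monotonicity $(u_m^{m-1}-u_m)(u_m-1)\ge 0$ together with Jensen's inequality $u_\infty^2\le\overline{u_\infty^2}$, whereas the paper estimates $\iint_{Q_T}|u_m(1-u_m)|P_\infty\,dxdt$ directly by splitting on $\{u_m\le1\}$ and $\{u_m\ge1\}$ and invoking $\|(u_m-1)_+\|_{L^2}\to0$; for the gradient identities in \eqref{sr1}--\eqref{sr2} you use that $\nabla P_\infty=0$ a.e.\ on $\{P_\infty=0\}$, which replaces the paper's $\nabla P_\infty^{1+\alpha}$, $\alpha\to0^+$ device. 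The arguments for \eqref{low1}, \eqref{ums} and \eqref{123} coincide with the paper's.

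The one genuine gap is \eqref{dualitye}. You propose to write $P_m\partial_t u_m=\tfrac1{m-1}\partial_t(u_m^m)\to0$ in $\mathcal D'(Q_T)$ and then to identify the limit of $P_m\partial_t u_m$ with $P_\infty\partial_t u_\infty$. That identification is a product of two merely weakly convergent sequences ($P_m$ weakly in $L^2(0,T;H^1)$, $\partial_t u_m$ weakly in $L^2(0,T;H^{-1})$), and the compactness available at this stage does not reach it: unwinding $\iint_{Q_T}P_m\,\partial_t u_m\,\varphi\,dxdt$ via the equation produces the term $\iint_{Q_T}\varphi\,\nabla u_m^m\cdot\nabla P_m\,dxdt$, a weak--weak product whose limit requires the strong convergence of $\nabla P_m$ --- but that is Lemma~\ref{pressure}, which comes \emph{after} Lemma~\ref{cr} and whose proof \emph{uses} \eqref{dualitye}, so the route is circular; mollifying in time does not help because the obstruction is in the spatial pairing. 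The paper proves \eqref{dualitye} working only with the limit functions: from \eqref{deq1} one has $\partial_t u_\infty\in L^2(0,T;\dot H^{-1}(\Omega))$, and since $u_\infty(t)P_\infty(t)=P_\infty(t)$ for a.e.\ $t$ with $0\le u_\infty\le1$, the difference quotients satisfy
\begin{equation*}
\frac{u_\infty(t+h)-u_\infty(t)}{h}P_\infty(t)=\frac{u_\infty(t+h)-1}{h}P_\infty(t)\le0,\qquad
\frac{u_\infty(t)-u_\infty(t-h)}{h}P_\infty(t)=\frac{1-u_\infty(t-h)}{h}P_\infty(t)\ge0,
\end{equation*}
and letting $h\to0^+$ in the $L^2(H^1)\times L^2(H^{-1})$ duality gives $\partial_t u_\infty\,P_\infty=0$. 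You should substitute this argument for your treatment of \eqref{dualitye}; all of its ingredients are already established in your proposal.
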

\begin{proof}
By direct computations, we get 
\[(u_m^2-1)_+\leq (K+1)(u_m-1)_+. \]
Then, combining this with Lemma~\ref{be} (\romannumeral4) supports the boundedness of $u_\infty,\overline{u_\infty^2}$ in \eqref{low1}. The boundedness of  $c_\infty$ can be verified by the comparison principle for the elliptic equation
\begin{equation}\label{cinfty}-\Delta c_\infty+c_\infty=u_\infty,\quad \text{a.e.}\text{ in}\ Q_T.\end{equation}

From Eq.~\eqref{deq}, we estimate
\[\|\partial_t u_m\|_{L^2(0,T;\dot{H}^{-1}(\Omega))}\leq \|\nabla u_m^m\|_{L^2(Q_T)}+\|u_m(K-u_m)\nabla c_m\|_{L^2(Q_T)}\leq C(T).\]
By means of the Sobolev embedding theorem, it follows up to a subsequence that 
\[u_m\to u_\infty,\quad \text{ strongly in }L^2(0,T;\dot{H}^{-1}(\Omega)),\ \text{ as }m\to \infty.\]
On the one hand, it holds by the weak-strong convergence that $u_mP_m$ converges to $u_\infty P_\infty$ in the sense of distribution as $m\to\infty$. On the other hand, thanks to \eqref{pwc} (Lemma \ref{be}),  $u_mP_m\rightharpoonup P_\infty\in L^2(Q_T)$ as $m\to\infty$. Thus, we have 
\[u_\infty P_\infty=P_\infty,\quad\text{a.e. in }Q_T. \]
Furthermore, as in \cite{HLP2023}, since, for any $\alpha>0$, 
\[u_\infty\nabla P_\infty^{1+\alpha}=(1+\alpha)u_\infty P_\infty^\alpha\nabla P_\infty=(1+\alpha) P_\infty^\alpha\nabla P_\infty=\nabla P_\infty^{1+\alpha}.\]
If $0<\alpha\leq \frac{1}{3}$,  we have $\nabla P_\infty^{1+\alpha}\in L^{\frac{3}{2}}(Q_T)$ and 
\[\nabla P_\infty^{\alpha+1}\rightharpoonup\nabla P_\infty,\quad \text{weakly in }L^{\frac{3}{2}}(Q_T),\ \text{as }\alpha\to0^+.\]
Therefore, it holds after $\alpha\to 0^+$ that 
\begin{equation}\label{uinfty} u_\infty \nabla P_\infty=\nabla P_\infty,\quad \text{a.e. in }Q_T.
\end{equation}
And \eqref{sr1} is proved.

Next,  we first show the strong convergence of $|u_m(1-u_m)P_\infty|$. Since 
\begin{equation*}
\begin{aligned}
\iint_{Q_T}&|u_m(1-u_m)P_\infty|dxdt\\
&=\iint_{\{u_m\leq 1\}}u_m(1-u_m)P_\infty dxdt+\iint_{\{u_m\geq 1\}}u_m(u_m-1)P_\infty dxdt\\
&\leq \iint_{\{u_m\leq 1\}}(1-u_m)P_\infty dxdt+K\iint_{\{u_m\geq 1\}}(u_m-1)P_\infty dxdt\\
&\leq \iint_{Q_T}(1-u_m)P_\infty dxdt+(K+1)\iint_{Q_T}(u_m-1)_+P_\infty dxdt.
\end{aligned}
\end{equation*}
Passing to the limit on $m$ for the above inequality, it holds by Lemma~\ref{be}  that 
\begin{equation*}
\begin{aligned}
\lim\limits_{m\to\infty}&\iint_{Q_T}|u_m(1-u_m)P_\infty|dxdt\\
&\leq \iint_{Q_T}(1-u_\infty)P_\infty dxdt+\lim\limits_{m\to\infty}(K+1)\|(u_m-1)_+\|_{L^2(Q_T)}\|P_\infty\|_{L^2(Q_T)}
=0.
\end{aligned}
\end{equation*}
Hence, we obtain 
\begin{equation*}
u_m(1-u_m)P_\infty\to0,\quad \text{ strongly in }L^1(Q_T),\ \text{as }m\to\infty.
\end{equation*}
In addition, the weak convergence in $L^2$ yields 
\begin{equation*}
u_m(1-u_m)P_\infty\rightharpoonup (u_\infty-\overline{u_\infty^2})P_\infty=0,\quad \text{ weakly in }L^2(Q_T),\ \text{as }m\to\infty.
\end{equation*}
Hence, we get 
\begin{equation*}
\overline{u_\infty^2}P_\infty=u_\infty P_\infty =P_\infty.
\end{equation*}
Similar to \eqref{uinfty}, we attain 
\begin{equation*}
\overline{u_\infty^2}\nabla P_\infty =\nabla P_\infty,
\end{equation*}
and \eqref{sr2} holds.

By passing to the limit for the density equation~\eqref{deq},  it holds 
\begin{equation}\label{deq1}\partial_t u_\infty=\Delta P_\infty-\nabla\cdot((Ku_\infty-\overline{u_\infty^2})\nabla c_\infty),\quad \text{in }\mathcal{D}'(Q_T).\end{equation}
From which, we derive 
\[\partial_t u_\infty\in L^2(0,T;\dot{H}^{-1}(\Omega)).\]
For any $h>0$, we have 
\[\frac{u_\infty(t+h)-u_\infty(t)}{h}P_\infty(t)=\frac{u_\infty(t+h)-1}{h}P_\infty(t)\leq 0,\]
\[\frac{u_\infty(t)-u_\infty(t-h)}{h}P_\infty(t)=\frac{1-u_\infty(t-h)}{h}P_\infty(t)\geq 0.\]
Let $h\to0^+$, then \eqref{dualitye} holds.

Finally, for \eqref{ums}, based on results \eqref{sr1} and \eqref{sr2}, we have
\begin{equation*}
    \iint_{Q_T}(u_m-1)^2|\partial_i P_\infty| dxdt\to\iint_{Q_T}(\overline{u_\infty^2}-2u_\infty+1)|\partial_i P_\infty| dxdt=0,\text{ as }m\to\infty,
\end{equation*} 
for $i=1,...,n$. Hence, we derive \eqref{123}.  \hfill
\end{proof}

We follow the approach of \cite{LX2021,noemi2023,HLP2023}, and give the strong convergence of the pressure.
\begin{lemma}\label{pressure}
Let $u_m$ be the weak solution to the initial boundary value problem of~\eqref{deq} and \eqref{ini}  under the initial assumptions \eqref{a1} and \eqref{a2}. Then, there exists $P_\infty\in L^2(0,T;H^1(\Omega))$ such that, after extraction of the subsequence (still denoted as $u_m^m,\ P_m$), it holds
\begin{equation}\label{umss}
\nabla u_m^m\to \nabla P_\infty,\quad\text{strongly in }L^2(Q_T),\text{ as }m\to\infty,
\end{equation}
\begin{equation}\label{um-1s}
\nabla P_m\to \nabla P_\infty,\quad\text{strongly in }L^2(Q_T),\text{ as }m\to\infty.
\end{equation}
\end{lemma}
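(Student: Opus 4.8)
The plan is to upgrade the weak convergences of Lemmas~\ref{be} and~\ref{cr} to strong convergence of the pressure gradient by a standard energy/compensated-compactness argument of the type used in \cite{LX2021,noemi2023,HLP2023}: one tests the density equation with the pressure $P_m$ (or with $P_m$ times a suitable cutoff), integrates by parts, and uses the weak limits together with the ``almost-one'' information $u_m\sqrt{P_\infty}\to\sqrt{P_\infty}$ already obtained to identify the limit of $\iint |\nabla P_m|^2$. First I would derive a uniform-in-$m$ identity for $\iint_{Q_T}\nabla u_m^m\cdot\nabla P_m\,dx\,dt$. Since $\nabla u_m^m=\frac{m-1}{m}u_m\nabla P_m+ \frac{1}{m} u_m^{m}\cdot(\text{lower order})$ — more precisely $\nabla u_m^m = u_m\nabla P_m$ up to the factor $\frac{m-1}{m}$, because $P_m=\frac{m}{m-1}u_m^{m-1}$ gives $u_m\nabla P_m = m u_m^{m-1}\nabla u_m = \frac{m}{m-1}\nabla u_m^m$ — we have the exact relation $\nabla u_m^m=\frac{m-1}{m}\,u_m\nabla P_m$. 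This is the key algebraic fact: it reduces \eqref{umss} to \eqref{um-1s} once we know $u_m\nabla P_m\to\nabla P_\infty$ strongly (and $\frac{m-1}{m}\to1$), and conversely; so it suffices to prove one of the two strong convergences, say \eqref{um-1s}.

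To prove \eqref{um-1s}, I would test the equation $\partial_t u_m=\Delta u_m^m-\nabla\cdot(u_m(K-u_m)\nabla c_m)$ against $P_m$ on $Q_T$. Using $\partial_t u_m=\frac{m-1}{m^2/(m-1)}\cdots$ — more usefully, note $\int_\Omega \partial_t u_m\, P_m\,dx = \frac{d}{dt}\int_\Omega G(u_m)\,dx$ for a convex potential $G$ with $G'(u)=\frac{m}{m-1}u^{m-1}=P_m(u)$, so the time term integrates to a bounded boundary-in-time contribution controlled by \eqref{a2} and estimate (iii) of Lemma~\ref{be}. The diffusion term yields $-\iint \nabla u_m^m\cdot\nabla P_m = -\frac{m-1}{m}\iint u_m|\nabla P_m|^2$, and the drift term yields $\iint u_m(K-u_m)\nabla c_m\cdot\nabla P_m$, which is bounded and, by the strong $L^2$ convergence of $\nabla c_m$ (Lemma~\ref{cms}, \eqref{e6}) together with the weak $L^2$ convergence of $\nabla P_m$ and the boundedness of $u_m$, converges to $\iint (Ku_\infty-\overline{u_\infty^2})\nabla c_\infty\cdot\nabla P_\infty$. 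Passing to the limit therefore pins down $\lim_{m\to\infty}\iint_{Q_T} u_m|\nabla P_m|^2\,dx\,dt$ in terms of limiting quantities. Separately, writing $u_m|\nabla P_m|^2 = |\nabla P_m|^2 - (1-u_m)|\nabla P_m|^2$ and using $0\le 1-u_m\le 1$ on $\{u_m\le1\}$ plus the smallness of $(u_m-1)_+$ from Lemma~\ref{be}(iv), one shows $\iint(1-u_m)|\nabla P_m|^2\to0$ is what remains to be controlled; here the already-established \eqref{123}, i.e. $u_m\sqrt{|\partial_i P_\infty|}\to\sqrt{|\partial_i P_\infty|}$ in $L^2$, together with $\nabla P_m\rightharpoonup\nabla P_\infty$ in $L^2$, lets us replace one factor $|\nabla P_m|$ by $|\nabla P_\infty|$ and conclude that $\iint u_m|\nabla P_m|^2\to\iint|\nabla P_\infty|^2$ — wait, more carefully: from the tested-equation identity we get the value of $\lim\iint u_m|\nabla P_m|^2$, and one checks this value equals $\iint|\nabla P_\infty|^2$ using $u_\infty\nabla P_\infty=\nabla P_\infty$ from \eqref{sr1}. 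Combining $\limsup\iint|\nabla P_m|^2\le \lim\iint u_m|\nabla P_m|^2 + \lim\iint(1-u_m)|\nabla P_m|^2$ with weak lower semicontinuity $\iint|\nabla P_\infty|^2\le\liminf\iint|\nabla P_m|^2$ and the matching of $L^2$ norms gives strong convergence $\nabla P_m\to\nabla P_\infty$ in $L^2(Q_T)$.

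The step I expect to be the main obstacle is controlling $\iint_{Q_T}(1-u_m)|\nabla P_m|^2\,dx\,dt$, equivalently showing $\lim\iint u_m|\nabla P_m|^2 = \iint|\nabla P_\infty|^2$ without any gradient bound on $u_m$ itself. The natural route is: split $\nabla P_m = u_m\nabla P_m + (1-u_m)\nabla P_m$ in one factor, note $u_m\nabla P_m \rightharpoonup \nabla P_\infty$ weakly (since $u_m(1-u_m)P_\infty\to0$ type arguments, or directly since $\frac{m}{m-1}\nabla u_m^m = u_m\nabla P_m$ and $\nabla u_m^m\rightharpoonup\nabla P_\infty$ by Lemma~\ref{be}), and handle the cross term $\iint(1-u_m)\nabla P_m\cdot u_m\nabla P_m$ and the quadratic term $\iint(1-u_m)^2|\nabla P_m|^2$. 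For the latter, decompose the domain into $\{u_m\le1\}$, where $(1-u_m)^2\le(1-u_m)$ so we must bound $\iint_{\{u_m\le1\}}(1-u_m)|\nabla P_m|^2$, and $\{u_m\ge1\}$, where $(1-u_m)^2=(u_m-1)^2\le(u_m-1)_+^2$ and Lemma~\ref{be}(iv) combined with an $L^\infty$ bound on $|\nabla P_m|$... but no such $L^\infty$ bound is available, so instead one must use \eqref{123} to write $(1-u_m)^2|\partial_i P_m| \cdot |\partial_i P_m|$ and exploit that $(1-u_m)^2|\partial_i P_\infty|\to0$ in $L^1$ (which is \eqref{123} squared) plus the fact that on the support complement of $P_\infty$ the gradients $\nabla P_m$ are themselves small in a suitable averaged sense — this is where the genuinely new input of Lemma~\ref{cr} is used, and making this rigorous (passing from smallness against $|\nabla P_\infty|$ to smallness against $|\nabla P_m|$) is the delicate point. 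I would handle it by an approximation: first establish the identity on test functions, then use the weak convergence $\nabla P_m\rightharpoonup\nabla P_\infty$ and the strong convergence from \eqref{123} in a bilinear pairing, arguing that $\iint (1-u_m)\nabla P_m\cdot\nabla P_m = \iint (1-u_m)\nabla P_m\cdot(\nabla P_m-\nabla P_\infty) + \iint(1-u_m)\nabla P_m\cdot\nabla P_\infty$, where the second term $\to0$ because $(1-u_m)\nabla P_\infty\to0$ strongly in $L^2$ (by \eqref{123} and $u_\infty\nabla P_\infty=\nabla P_\infty$) while $\nabla P_m$ is bounded in $L^2$, and the first term is absorbed once strong convergence is bootstrapped — i.e. one closes the argument by a $\limsup$ inequality rather than proving each piece vanishes in isolation.
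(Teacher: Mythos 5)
Your overall strategy (an energy identity combined with the weighted convergences of Lemma~\ref{cr}) is in the right spirit, but the route has a genuine gap at exactly the point you flag as delicate, and it is not closable as proposed. By testing the equation with $P_m$ you are led to identify $\lim_m\iint_{Q_T}u_m|\nabla P_m|^2\,dxdt$, and you must then pass from this to $\lim_m\iint_{Q_T}|\nabla P_m|^2\,dxdt$, i.e.\ control $\iint_{Q_T}(1-u_m)|\nabla P_m|^2\,dxdt$. All the available information on the smallness of $1-u_m$ is measured against the \emph{limit} objects $P_\infty,\nabla P_\infty$ (relations \eqref{sr1}, \eqref{ums}, \eqref{123}); it says nothing about $1-u_m$ against a possible concentration defect of $|\nabla P_m|^2$. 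Concretely, after your split $\iint(1-u_m)|\nabla P_m|^2=\iint(1-u_m)\nabla P_m\cdot(\nabla P_m-\nabla P_\infty)+\iint(1-u_m)\nabla P_m\cdot\nabla P_\infty$, the second integral does tend to $0$, but the first is only bounded by $C\|\nabla P_m-\nabla P_\infty\|_{L^2(Q_T)}$ with $C$ of order $\max\{1,K-1\}\sup_m\|\nabla P_m\|_{L^2(Q_T)}$, which is not small; setting $X_m=\|\nabla P_m-\nabla P_\infty\|_{L^2(Q_T)}^2$, the resulting inequality $\limsup X_m\le C\limsup\sqrt{X_m}$ gives nothing, and ``absorbing it once strong convergence is bootstrapped'' is circular. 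A secondary problem: your limit for the drift term $\iint u_m(K-u_m)\nabla c_m\cdot\nabla P_m$ is obtained by multiplying two weak limits ($u_m(K-u_m)$ weak-$*$ in $L^\infty$ and $\nabla P_m$ weak in $L^2$), which is not legitimate; this one is repairable by writing $u_m(K-u_m)\nabla P_m=\nabla\bigl(Ku_m^m-\tfrac{m}{m+1}u_m^{m+1}\bigr)$ and invoking \eqref{pwc}. (Also, the exact algebra is $\nabla u_m^m=u_m\nabla P_m$, without the factor $\tfrac{m-1}{m}$; harmless.)

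The paper sidesteps the problematic quadratic term by choosing the test function $u_m^m-P_\infty$ instead of $P_m$: then the only term quadratic in the unknown sequence is $\iint_{Q_T}|\nabla(u_m^m-P_\infty)|^2\,dxdt$ itself---precisely the quantity to be shown to vanish---and every remaining term pairs a weakly convergent sequence with a strongly convergent one or with the fixed functions $P_\infty,\nabla P_\infty$, so the limit is computed term by term and annihilated using $\partial_tu_\infty P_\infty=0$, \eqref{sr1} and \eqref{sr2}. This gives \eqref{umss} directly. The step from \eqref{umss} to \eqref{um-1s} is then \emph{not} the free consequence of $\nabla u_m^m=u_m\nabla P_m$ that you suggest, since one must divide by $u_m$ where $u_m$ may be small; the paper handles this by splitting into $\{u_m\le1-\varepsilon\}$, where $\nabla u_m^{m-1}$ is exponentially small in $m$, and $\{u_m\ge1-\varepsilon\}$, where the division is harmless and \eqref{ums} identifies the limit. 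I would recommend redoing your computation with the paper's test function, or else finding a genuinely new argument ruling out concentration of $|\nabla P_m|^2$ on the set where $u_m$ stays away from $1$.
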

\begin{proof}
We multiply \eqref{deq} by $u_m^{m}-P_\infty$ and integrate on $Q_T$, we obtain  \begin{equation*}
\begin{aligned}
\int&\hskip-4pt\int_{Q_T}|\nabla(u_m^m-P_\infty)|^2dxdt
=-\int_{\Omega}\frac{1}{m+1}u_m^{m+1}(t)dx|_0^T+\iint_{Q_T}\partial_t u_m P_\infty dxdt\\
&-\iint_{Q_T}\nabla P_\infty\cdot\nabla (u_m^m-P_\infty)dxdt+\iint_{Q_T}\nabla c_m\cdot\nabla (K\frac{m}{m+1}u_m^{m+1}-\frac{m}{m+2}u_m^{m+2})dxdt\\
&-\iint_{Q_T}(K-1)u_m\nabla c_m\cdot \nabla P_\infty dxdt-\iint_{Q_T}u_m(1-u_m)\nabla c_m\cdot\nabla P_\infty dxdt.
\end{aligned}
\end{equation*}

Taking the limit as $m\to\infty$, we have 
\begin{equation*}
\begin{aligned}
\lim\limits_{m\to\infty}\int&\hskip-4pt\int_{Q_T}|\nabla(u_m^m-P_\infty)|^2dxdt
=0+\iint_{Q_T}\partial_t u_\infty P_\infty dxdt-0\\
&+(K-1)\iint_{Q_T}(1-u_\infty)\nabla c_\infty \cdot\nabla P_\infty dxdt-\iint_{Q_T}(u_\infty-\overline{u_\infty^2})\nabla c_\infty \cdot\nabla P_\infty dxdt.
\end{aligned}
\end{equation*}
Hence, using the fact $\partial_t u_\infty P_\infty=0$ in the duality $L^2(H^1)\times L^2(H^{-1})$ and by means of \eqref{sr1} and \eqref{sr2} (Lemma~\ref{cr}), we conclude 
\begin{equation}\label{uums}\lim\limits_{m\to\infty}\iint_{Q_T}|\nabla(u_m^m-P_\infty)|^2dxdt=0,
\end{equation}
which proves \eqref{umss}.

Compared with \cite{HLP2023} via the cut-off technique, we can give a simpler approach to prove \eqref{um-1s} based on \eqref{umss} by making use of the weighted strong convergence \eqref{ums}. For any $\varepsilon\in (0,\frac{1}{2})$, we decompose 
\begin{equation*}
\begin{aligned}
\nabla &u_m^{m-1}-\nabla P_\infty\\
=&\nabla u_m^{m-1}\textbf{1}_{\{u_m\leq1-\varepsilon\}}+\nabla u_m^{m-1}\textbf{1}_{\{u_m\geq1-\varepsilon\}}-\nabla P_\infty\\
=&\nabla u_m^{m-1}\textbf{1}_{\{u_m\leq1-\varepsilon\}}+\frac{m-1}{mu_m}\nabla (u_m^{m}-P_\infty)\textbf{1}_{\{u_m\geq1-\varepsilon\}}-(\textbf{1}_{\{u_m\geq1-\varepsilon\}} \frac{m-1}{mu_m}-1)\nabla P_\infty.
\end{aligned}
\end{equation*}
Using strong convergence \eqref{ums} and the dominated convergence theorem, we get 
\begin{equation}\label{cfs}
\begin{aligned}
(\textbf{1}_{\{u_m\geq1-\varepsilon\}}& \frac{m-1}{mu_m}-1)\nabla P_\infty\\
&\to(\textbf{1}_{\{u_\infty\geq1-\varepsilon\}} \frac{1}{u_\infty}-1)\nabla P_\infty=0,\quad \text{ strongly in }L^2(Q_T),\ \text{as }m\to\infty.
\end{aligned}
\end{equation}
 By the triangle inequality, we directly get 
\begin{equation*}
\begin{aligned}
&\|\nabla (u^{m-1}-P_\infty)\|_{L^2(Q_T)}\leq (m-1)(1-\varepsilon)^{\frac{m-3}{2}}\|\nabla u_m\cdot\nabla u_m^m\|_{L^1(Q_T)}^2\\
&+\frac{m-1}{m (1-\varepsilon)}\|\nabla (u_m^m-P_\infty)\|_{L^2(Q_T)}+\|(\textbf{1}_{\{u_m\geq1-\varepsilon\}} \frac{m-1}{mu_m}-1)\nabla P_\infty\|_{L^2(Q_T)},
\end{aligned}
\end{equation*}
and it follows from \eqref{uums}, \eqref{cfs} and the fact $\iint_{Q_T}\nabla u_m\cdot\nabla u_m^mdxdt\leq C$ that 
\[\|\nabla (u^{m-1}-P_\infty)\|_{L^2(Q_T)}\to0,\quad \text{as }m\to\infty,\]
and \eqref{um-1s} holds.
\hfill
\end{proof}

\vspace{4mm}
 
On account of the gotten estimates in Lemmas \ref{cr}-\ref{pressure}, the \emph{complementarity relation} is verified  by 
\begin{proposition}[Complementarity relation]\label{complementarityr}Under the initial assumptions \eqref{a1}-\eqref{a2}, for any $K>0$, the  limits $(u_\infty,c_\infty,P_\infty)$ of $(u_m,c_m,P_m)$ satisfy the complementarity relation, that is
\begin{equation}\label{CR}\begin{aligned}
&P_\infty[\Delta P_\infty+(K-1)(1-c_\infty)]=0,\ &&\text{in }\mathcal{D}'(Q_T),\\
&\nabla P_\infty\cdot \vec{n}=0,\ &&\text{on }\Gamma_T,
\end{aligned}\end{equation}
where $\vec{n}$ is the outer normal vector on the boundary $\partial\Omega$. In particular, for $K=1$, 
\begin{equation}\label{39}
\nabla P_\infty=0,\quad \text{ a.e. in }Q_T,\end{equation}
and for $K<1$,
\begin{equation}\label{40}
P_\infty=0,\quad \text{a.e. in } Q_T.
\end{equation}
\end{proposition}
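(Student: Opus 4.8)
\medskip
\noindent\textbf{Proof strategy.}\
I would obtain \eqref{CR} by passing to the limit $m\to\infty$ in the pressure equation \eqref{pe} divided by $m-1$,
\[
\frac{1}{m-1}\,\partial_t P_m=P_m\big[\Delta P_m+(K-u_m)(u_m-c_m)\big]+\frac{1}{m-1}\,\nabla P_m\cdot\big(\nabla P_m+(2u_m-K)\nabla c_m\big),
\]
tested against an arbitrary $\varphi\in\mathcal{C}^\infty(\overline\Omega\times[0,T))$ with $\varphi(\cdot,T)=0$ (allowed to be non-zero on $\partial\Omega$) and integrated over $Q_T$. The no-flux condition inherited from $\nabla u_m\cdot\vec n=0$ cancels the boundary contribution produced when $\iint_{Q_T}P_m\Delta P_m\,\varphi$ is integrated by parts, so that this term equals $-\iint_{Q_T}\big(|\nabla P_m|^2\varphi+P_m\nabla P_m\cdot\nabla\varphi\big)$, which is well defined since $\nabla P_m,P_m\in L^2(Q_T)$ (Lemma~\ref{be}). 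Testing with functions that do not vanish on $\partial\Omega$ is what will simultaneously encode the Neumann condition $\nabla P_\infty\cdot\vec n=0$ on $\Gamma_T$ in the limit.

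The limit is then taken term by term. Shifting $\partial_t$ onto $\varphi$, the left-hand side becomes $-\tfrac{1}{m-1}\iint_{Q_T}P_m\partial_t\varphi-\tfrac{1}{m-1}\int_\Omega P_{m,0}\,\varphi(\cdot,0)\,dx$, which tends to $0$ thanks to $\|P_m\|_{L^2(Q_T)}\le C(T)$ and $\|P_{m,0}\|_{L^1(\Omega)}\le C$ (assumption~\eqref{a2}); the bracket carrying the $\tfrac1{m-1}$ prefactor also tends to $0$, because $\|\nabla P_m\|_{L^2(Q_T)}^2$ and $\|\nabla P_m\cdot(2u_m-K)\nabla c_m\|_{L^1(Q_T)}$ are bounded by $C(T)$. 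For the quadratic term I would use the \emph{strong} convergence $\nabla P_m\to\nabla P_\infty$ in $L^2(Q_T)$ from Lemma~\ref{pressure}, which gives $|\nabla P_m|^2\to|\nabla P_\infty|^2$ in $L^1(Q_T)$, and combine it with $P_m\rightharpoonup P_\infty$ weakly in $L^2(Q_T)$ to get $\iint_{Q_T}P_m\Delta P_m\,\varphi\to-\iint_{Q_T}\big(|\nabla P_\infty|^2\varphi+P_\infty\nabla P_\infty\cdot\nabla\varphi\big)$. For the reaction term, expanding
\[
P_m(K-u_m)(u_m-c_m)=K\,u_mP_m-u_m^2P_m-K\,c_mP_m+u_mc_mP_m,
\]
I would invoke $u_mP_m=\tfrac{m}{m-1}u_m^m\rightharpoonup P_\infty$ and $u_m^2P_m=\tfrac{m}{m-1}u_m^{m+1}\rightharpoonup P_\infty$ weakly in $L^2(Q_T)$ (Lemma~\ref{be}, \eqref{pwc}), together with $c_m\to c_\infty$ strongly in every $L^p(Q_T)$ and $0\le c_m\le K$ (whence $c_mP_m\rightharpoonup c_\infty P_\infty$ and $u_mc_mP_m\rightharpoonup c_\infty P_\infty$), to conclude $P_m(K-u_m)(u_m-c_m)\rightharpoonup(K-1)(1-c_\infty)P_\infty$. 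Collecting these, the limiting identity
\[
\iint_{Q_T}\big(|\nabla P_\infty|^2\varphi+P_\infty\nabla P_\infty\cdot\nabla\varphi\big)\,dxdt=\iint_{Q_T}(K-1)(1-c_\infty)P_\infty\,\varphi\,dxdt
\]
holds for every admissible $\varphi$, which is exactly the distributional form of $P_\infty[\Delta P_\infty+(K-1)(1-c_\infty)]=0$ together with $\nabla P_\infty\cdot\vec n=0$ on $\Gamma_T$, that is, \eqref{CR}.

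For the two borderline regimes: if $K<1$, then \eqref{40} follows at once, and does not even require \eqref{CR}, since $0\le P_m=\tfrac{m}{m-1}u_m^{m-1}\le\tfrac{m}{m-1}K^{m-1}\to0$ uniformly on $Q_T$, so $P_\infty=0$. If $K=1$, the identity above reduces to $\iint_{Q_T}\big(|\nabla P_\infty|^2\varphi+P_\infty\nabla P_\infty\cdot\nabla\varphi\big)\,dxdt=0$ for all admissible $\varphi$; choosing $\varphi(x,t)=\theta(t)$ with $\theta\in\mathcal{C}^\infty([0,T])$, $\theta\ge0$, $\theta(T)=0$, one has $\nabla\varphi\equiv0$ and is left with $\int_0^T\theta(t)\big(\int_\Omega|\nabla P_\infty(\cdot,t)|^2dx\big)dt=0$; the arbitrariness of $\theta$ forces $\int_\Omega|\nabla P_\infty(\cdot,t)|^2dx=0$ for a.e.\ $t$, i.e.\ \eqref{39}.

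The step I expect to be the main obstacle is the limit passage in the quadratic term $P_m\Delta P_m$: it is precisely here that the strong $L^2(Q_T)$ convergence of $\nabla P_m$ (Lemma~\ref{pressure}), and not merely its weak convergence, is indispensable, since otherwise $\lim_{m\to\infty}|\nabla P_m|^2$ could not be identified with $|\nabla P_\infty|^2$. A second delicate point, which makes the expansion of $P_m(K-u_m)(u_m-c_m)$ collapse to $(K-1)(1-c_\infty)P_\infty$, is the fact---established in Lemma~\ref{be}---that all the sequences $u_m^{m+k}$, $k\ge0$, share the single weak limit $P_\infty$, so that $u_m^{m+1}\rightharpoonup P_\infty$ and not to a strictly larger limit.
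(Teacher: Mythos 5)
Your proposal is correct and follows essentially the same route as the paper: test the pressure equation \eqref{pe} divided by $m-1$ against $\varphi\in\mathcal{C}^1(\overline\Omega\times[0,T))$, kill the $\tfrac{1}{m-1}$-prefactored terms by the uniform bounds of Lemma~\ref{be}, pass to the limit in $|\nabla P_m|^2$ via the strong convergence of Lemma~\ref{pressure} and in the reaction term via the weak limits $u_m^{m+k}\rightharpoonup P_\infty$ from \eqref{pwc} (the paper phrases this step through the identities $u_\infty P_\infty=\overline{u_\infty^2}P_\infty=P_\infty$ of Lemma~\ref{cr}, which is equivalent), and then handle $K=1$ by choosing $\varphi$ constant in $x$ and $K<1$ by the pointwise bound $P_m\le\tfrac{m}{m-1}K^{m-1}\to0$, exactly as in the paper.
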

\begin{proof}
 For the smooth test function $\varphi\in C^1(\overline{\Omega}\times(0,T))$, it follows from Eq.~\eqref{pe} that  
\[\begin{aligned}
\frac{1}{m-1}\iint_{Q_T}P_m\partial_t \varphi dxdt=\iint_{Q_T}&|\nabla P_m|^2\varphi dxdt+\iint_{Q_T}P_m\nabla P_m\cdot\nabla\varphi dxdt\\
&-\iint_{Q_T}P_m(Ku_m-u_m^2-Kc_m+u_mc_m)\varphi dxdt\\
&-\frac{1}{m-1}\iint_{Q_T}\nabla P_m\cdot(\nabla P_m+(2u_m-K)\nabla c_m)\varphi dxdt.\end{aligned}\]
Using Lemma~\ref{cms}, Lemma~\ref{be} and Lemma~\ref{pressure}, passing to the limit as $m\to\infty$ yields 
\begin{equation*}\begin{aligned}
\iint_{Q_T}|\nabla P_\infty|^2\varphi dxdt+\iint_{Q_T}P_\infty\nabla& P_\infty\cdot\nabla\varphi dxdt\\
&-\iint_{Q_T}P_\infty(Ku_\infty-\overline{u_\infty^2}-Kc_\infty+u_\infty c_\infty)\varphi dxdt=0.   
\end{aligned}\end{equation*}
By means of Lemma~\ref{cr}, we have 
\begin{equation*}
\iint_{Q_T}|\nabla P_\infty|^2\varphi dxdt+\iint_{Q_T}P_\infty\nabla P_\infty\cdot\nabla\varphi dxdt
-\iint_{Q_T}P_\infty(K-1)(1-c_\infty)\varphi dxdt=0,
\end{equation*}
which proves $\eqref{CR}_1$.

Furthermore, if $K=1$, it follows
\[P_\infty\Delta P_\infty=0,\quad\text{in }\mathcal{D}'(Q_T).\]
Integrating  on $Q_T$ and integrating by parts imply 
\[\nabla P_\infty=0,\quad \text{in }L^2(Q_T).\]
In addition, for $K<1$, we attain 
\[P_m\leq \frac{m}{m-1}K^{m-1}\to0,\text{ as }m\to\infty, \]
which means
\[P_\infty=0,\quad \text{a.e. in }Q_T.\]
\end{proof}
\noindent\underline{\textbf{Proof of Theorem~\ref{crelation}}} We verify \eqref{ss0}-\eqref{ss1} in Lemma~\ref{cms}. Limits \eqref{ss2} and \eqref{ss3} are given in Lemma \ref{pressure} and Lemma \ref{cr}, respectively. Proposition~\ref{complementarityr} shows the complementarity relation \eqref{comrela} in the sense of distribution. Equations  $\eqref{hss1}_{1-2}$ are studied in \eqref{deq1} and \eqref{cinfty}, respectively. We prove $\eqref{hss1}_{3}$ in Lemma~\ref{cr}.

\section{Strong convergence of density}\label{std}
This section is devoted to proving the strong limit of the density $\{u_m\}_{m>1}$ for $K\leq 1$. We first write the kinetic formulation of the Keller-Segel system \eqref{deq} for all $K>0$, and then derive its limit as $m$ tends to infinity, where the kinetic formulation is inspired by the previous work~\cite{bp2009}. Since the stiff pressure effect vanishes for $K\leq 1$, we justify that the limiting kinetic function only takes values $0$~or~$1$, which implies the strong limit of the density.

\vspace{3mm}

For the weak solution $u_m$ of the problem \eqref{deq}, the corresponding kinetic function $f_m$ is defined by 
\begin{equation}\label{f_m}
f_m(t,x,\xi)=\textbf{1}_{\xi<u_m(t,x)},\quad t\in [0,\infty),\ x\in\Omega,\ \xi\in \mathbb{R_+}.
\end{equation} We state the main result of this section as follows:
\begin{proposition}\label{strong convergence of u}Let $u_m$ 
 be the weak solution of  Eq.~\eqref{deq} with \eqref{ini} under the initial assumptions~\eqref{a1}-\eqref{a2}.
For any $T>0$, there exists $f_\infty(t,x,\xi)$ satisfying $0\leq f_\infty\leq 1$ such that, up to a subsequence (still denoted as $f_m$), as $m\to \infty$, it holds     
\begin{equation}\label{f_weak}
f_m \rightharpoonup f_\infty,\quad *-\text{weak} \text{ in }L^\infty(Q_T\times\mathbb{R}_+).
\end{equation}
Then, for $K\leq1$, let $u_\infty$ be the $*-$weak limit  of $u_m$ in $L^\infty(Q_T)$ as $m\to\infty$, it follows 
\begin{equation}\label{f_def}
f_\infty=\textbf{1}_{\xi<u_\infty},\quad \text{a.e. in } Q_T\times \mathbb{R}_+.
\end{equation}
Moreover, after further extraction, the strong limit of $u_m$ holds 
\begin{equation}\label{u_def}
u_m\to u_\infty,\quad\text{strongly in } L^p(Q_T)\text{ with }p\in[1,\infty),\ \text{as }m\to\infty.
\end{equation}
\end{proposition}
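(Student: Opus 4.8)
The plan is to run the kinetic/compensated-compactness machinery for scalar conservation laws, adapted to the chemotactic drift. First I would write the kinetic formulation of $\eqref{deq}_1$: differentiating $f_m=\mathbf{1}_{\xi<u_m}$ formally in $t$ and using the equation for $u_m$, one obtains a transport-type equation for $f_m$ in $(t,x)$ of the form
\[
\partial_t f_m + \nabla_x\cdot\big((K-\xi)\nabla c_m\, f_m\big) - m\,\xi^{m-1}\Delta_x f_m \;=\; \partial_\xi m_m \;+\; (\text{lower order }\xi\text{-terms}),
\]
where $m_m\ge 0$ is the kinetic (parabolic dissipation) measure, $m_m(t,x,\xi)=|\nabla_x u_m^{m/2}|^2$-type density concentrated at $\xi=u_m$, and the extra $\xi$-derivative terms come from writing the drift term $\nabla\cdot(u_m(K-u_m)\nabla c_m)$ in the variable $\xi$. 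The precise bookkeeping here is routine but must be done carefully because the sensitivity is quadratic in $u_m$; this gives a term like $\partial_\xi\big(\xi(K-\xi)\nabla c_m\cdot\nabla f_m\big)$ or similar, which is harmless since it is a total $\xi$-derivative of an $L^\infty$-in-$\xi$, $L^2_{t,x}$ quantity. Using the uniform bounds from Lemma~\ref{be} (in particular $\|\nabla u_m^m\|_{L^2(Q_T)}\le C(T)$, hence $\|\nabla u_m^{m/2}\|_{L^2}$ bounded after a standard manipulation, and $m\,\xi^{m-1}\to 0$ pointwise in $\xi\in[0,1)$) together with $\|\nabla c_m\|_{L^\infty_t W^{1,q}_x}\le C$, all coefficients pass to the limit and the measure $m_m$ is uniformly bounded, so along a subsequence $m_m\rightharpoonup m_\infty\ge 0$.

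Next I would pass to the limit in the kinetic equation. By Lemma~\ref{be}(iv), $(u_m-1)_+\to 0$ in $L^2(Q_T)$, so $f_m$ is asymptotically supported in $\xi\le 1$; together with \eqref{f_weak} this gives $0\le f_\infty\le 1$ and $f_\infty=0$ for $\xi\ge 1$. Crucially, for $K\le 1$ the diffusion coefficient at the limit degenerates: $m\,\xi^{m-1}\to 0$ for every $\xi<1$ and $P_\infty$ carries no stiff effect, by Proposition~\ref{complementarityr} we have $\nabla P_\infty=0$ when $K=1$ and $P_\infty=0$ when $K<1$, so the limiting kinetic equation loses its $x$-diffusion and becomes a pure transport equation in $(t,x)$ for each fixed $\xi$:
\[
\partial_t f_\infty + \nabla_x\cdot\big((K-\xi)\nabla c_\infty\, f_\infty\big) \;=\; \partial_\xi m_\infty \quad\text{in }\mathcal{D}'(Q_T\times\mathbb{R}_+),
\]
with $m_\infty\ge0$ and, from the complementarity/identities of Lemma~\ref{cr}, the defect measure must in fact vanish on $\{0<\xi<1\}$ (this is where $P_\infty$ having no stiff pressure enters: there is no surviving parabolic dissipation to feed $m_\infty$ in the interior of the $\xi$-interval). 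Standard kinetic-formulation theory (cf.~\cite{bp2002,bp2009}) then forces $f_\infty$ to be an \emph{indicator}: $f_\infty(1-f_\infty)=0$ a.e., and combined with monotonicity in $\xi$ and $f_\infty\to 0,1$ at $\xi\to\pm\infty$, one gets $f_\infty=\mathbf{1}_{\xi<v}$ for some $v=v(t,x)$. Integrating $f_m$ and $f_\infty$ over $\xi\in(0,\infty)$ and using $\int_0^\infty f_m\,d\xi = u_m \rightharpoonup u_\infty$ identifies $v=u_\infty$, which is \eqref{f_def}.

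Finally, \eqref{u_def} follows from \eqref{f_def} by the standard argument that weak convergence of the kinetic functions plus the indicator structure of the limit upgrades to strong convergence of the moments: from $f_m\rightharpoonup \mathbf{1}_{\xi<u_\infty}$ $*$-weakly and $0\le f_m\le 1$, one computes
\[
\iint_{Q_T}|u_m-u_\infty|\,dx\,dt=\iint_{Q_T}\Big|\int_0^\infty (f_m-\mathbf{1}_{\xi<u_\infty})\,d\xi\Big|\,dx\,dt,
\]
and testing the weak convergence against $\operatorname{sgn}(u_m-u_\infty)$-type functions (made rigorous by the fact that $(f_m-f_\infty)(f_m-f_\infty)\ge 0$ in the appropriate sense, i.e. $|f_m-\mathbf{1}_{\xi<u_\infty}|=(f_m-\mathbf{1}_{\xi<u_\infty})\operatorname{sgn}(\xi - u_\infty)$ up to the overlap set) yields $\|u_m-u_\infty\|_{L^1(Q_T)}\to 0$; the uniform $L^\infty$ bound $0\le u_m\le K$ then upgrades this to $L^p(Q_T)$ for every $p\in[1,\infty)$ by interpolation. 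The main obstacle I expect is the second step: rigorously justifying that the limiting defect measure $m_\infty$ does not prevent $f_\infty$ from being an indicator — equivalently, controlling the term $m\,\xi^{m-1}\Delta_x f_m$ and the quadratic-sensitivity $\xi$-flux in the limit and showing no mass of $m_\infty$ survives on $\{0<\xi<1\}$. This is exactly where the hypothesis $K\le 1$ (vanishing stiff pressure, via Proposition~\ref{complementarityr}) is indispensable, and it must be combined with the strong convergences $\nabla P_m\to\nabla P_\infty$ and $u_m\sqrt{P_\infty}\to\sqrt{P_\infty}$ from Lemmas~\ref{cr}--\ref{pressure} to close the argument.
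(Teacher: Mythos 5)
Your overall strategy (kinetic formulation, passage to the limit, indicator structure of $f_\infty$, then strong convergence of the moments) is the same as the paper's, and your treatment of Step 3 (deducing \eqref{u_def} from \eqref{f_def}) is fine. However, there are two genuine gaps in the middle of the argument.

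First, you assert that the limiting dissipation measure ``must in fact vanish on $\{0<\xi<1\}$'' because no parabolic dissipation survives. This is neither justified nor needed, and it is not what the vanishing of the stiff pressure gives you. The measure $M_m=\delta_{\xi=u_m}\nabla u_m\cdot\nabla u_m^m$ is only known to be uniformly bounded in $\mathcal{M}^1$; its weak limit $M_\infty$ has no reason to vanish for $K\le1$. What the paper actually uses is the \emph{sign} structure: since $\partial_\xi f_\infty=-\nu\le 0$, the term $2\iint M_{\infty,\epsilon}\,\partial_\xi f_{\infty,\epsilon}$ appearing in the evolution of $\iint f_{\infty,\epsilon}(1-f_{\infty,\epsilon})$ is nonpositive and can simply be dropped in the Gr\"onwall inequality. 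The role of $K\le1$ (via Proposition~\ref{complementarityr}) is only to kill the \emph{separate} term $\delta_{\xi=1}\Delta P_\infty$ produced by the limit of $\nabla_x\cdot(\partial_\xi f_m\nabla u_m^m)$ --- a limit which itself requires the strong convergence $\nabla u_m^m\to\nabla P_\infty$ and the identity $f_\infty\nabla P_\infty=\textbf{1}_{\xi<1}\nabla P_\infty$, not the pointwise convergence $m\xi^{m-1}\to0$ on $[0,1)$ that you invoke (that coefficient blows up as $\xi\to1$ and gives no control of the distributional limit).

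Second, you dismiss the chemotactic terms as ``harmless total $\xi$-derivatives,'' but the genuinely hard term is not a $\xi$-derivative at all: expanding $g'(\xi)\nabla c_m\cdot\nabla f_m=\nabla\cdot(g'(\xi)\nabla c_m f_m)+(u_m-c_m)g'(\xi)f_m$ produces the weak-weak product $u_m f_m$, whose $*$-weak limit $\rho_\infty$ is \emph{not} $u_\infty f_\infty$. The paper identifies $\rho_\infty$ through the moment identity $\rho_\infty-\xi f_\infty=\int_\xi^\infty f_\infty\,d\eta$ and then uses the estimate $|\rho_\infty-u_\infty f_\infty|\le C f_\infty(1-f_\infty)$ (Lemma~\ref{proved}, from \cite{bp2009}) so that this error is absorbed into the Gr\"onwall quantity $\iint F_\epsilon$. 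Without this control the comparison of $f_\infty$ with $f_\infty^2$ does not close, so this is an essential missing ingredient rather than a routine bookkeeping issue.
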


\subsection{Derivation of the kinetic formulation.} 

\rm To study the limiting kinetic function $f_\infty$ and further prove \eqref{f_def}, 
we introduce the kinetic formulation corresponding to Eq.~\eqref{deq}, namely
\begin{equation}\label{kf}
\begin{aligned}
&\partial_t f_m+(\xi-c_m)g(\xi)\partial_\xi f_m+g'(\xi)\nabla_x c_m\cdot \nabla_x f_m+\nabla_x\cdot(\partial_\xi f_m\nabla u_m^m)=\partial_{\xi}M_m(t,x,\xi), 
\end{aligned}
\end{equation}
in $\mathcal{D}'(Q_T\times \mathbb{R}_+)$,
where
$$g(\xi):=\xi(K-\xi)$$ 
and  $M_m(t,x,\xi)$ is a Radon measure  on $[0,\infty)\times\Omega\times\mathbb{R}_+$ (see Lemma~\ref{radon}), given by
\[
M_m(t,x,\xi):=\delta_{\xi=u_m}\nabla u_m\cdot\nabla u_m^m\geq 0.
\]

To deduce \eqref{kf}, we introduce a convex entropy 
\[\eta(u_m):=(u_m-\xi)_+,\quad \xi\in\mathbb{R}_+.\]
By direct calculations, we get the relations
\begin{equation}\label{delta1}
\partial_\xi \eta(u_m)=-f_m(t,x,\xi),\quad \partial_\xi f_m=-\delta_{\xi=u_m}=-\partial_{u_m}f_m.
\end{equation}

Next, multiplying \eqref{deq} by $\eta'(u_m)$ and by the chain rule of derivative, it follows 
\begin{equation}\label{etaum}
\begin{aligned}
\partial_t\eta(u_m)&+\nabla\cdot(q(u_m) \nabla c_m)+(u_m-c_m)[q(u_m)-u_m(K-u_m)\eta'(u_m)]\\&-\nabla\cdot(\eta'(u_m)mu_m^{m-1}\nabla u_m)
=-\eta''(u_m)\nabla u_m\cdot\nabla u_m^m,\quad\text{in }\mathcal{D}'(Q_T\times \mathbb{R}_+),
\end{aligned}
\end{equation}
where the function $q$ is given by $q'=g'\eta'$.
Then, we differentiate the equality \eqref{etaum} with respect to $\xi$ in the distribution sense, and finally get \eqref{kf} by means of \eqref{delta1}.

\vspace{3mm}

\begin{lemma}\label{radon}
$M_m$ is a real non-negative uniformly bounded Radon measure for all $m>1$.
\end{lemma}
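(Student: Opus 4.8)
**Proof strategy for Lemma~\ref{radon} (i.e., $M_m = \delta_{\xi=u_m}\nabla u_m\cdot\nabla u_m^m$ is a non-negative, uniformly bounded Radon measure).**

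The plan is to test $M_m$ against an arbitrary $\psi \in C_c([0,\infty)\times\Omega\times\mathbb{R}_+)$ and produce a bound of the form $|\langle M_m,\psi\rangle| \le \|\psi\|_\infty\, C(T)$, with $C(T)$ independent of $m$; by the Riesz representation theorem this simultaneously shows $M_m$ is a genuine (signed, hence by positivity, non-negative) Radon measure and gives the uniform mass bound. Non-negativity is essentially built in: $\nabla u_m\cdot\nabla u_m^m = m u_m^{m-1}|\nabla u_m|^2 \ge 0$ pointwise wherever the density is regular enough for the chain rule, so the defining expression is a non-negative quantity "restricted to the level set $\{\xi = u_m\}$."

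First I would make the object precise. Using the co-area/layer-cake identity one has, for test functions $\psi$,
\begin{equation*}
\langle M_m,\psi\rangle = \iint_{Q_T} \psi(t,x,u_m(t,x))\,\nabla u_m\cdot\nabla u_m^m\,dx\,dt = \iint_{Q_T}\psi(t,x,u_m)\, m u_m^{m-1}|\nabla u_m|^2\,dx\,dt,
\end{equation*}
which is manifestly non-negative when $\psi\ge 0$. The key observation is the algebraic identity $m u_m^{m-1}|\nabla u_m|^2 = \nabla u_m\cdot\nabla u_m^m$, and moreover $\nabla u_m \cdot \nabla u_m^m = \frac{m}{m+1}\nabla u_m^{(m+1)/2}\cdot \nabla u_m^{(m+1)/2}\cdot\frac{4}{(m+1)} \cdot \frac{(m+1)^2}{4m}$-type rewriting; more usefully, $\nabla u_m\cdot\nabla u_m^m$ can be controlled by the product of two gradient factors that are already known to be bounded in $L^2(Q_T)$. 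Indeed, write $\nabla u_m\cdot \nabla u_m^m = \big(\tfrac{2}{m+1}\nabla u_m^{(m+1)/2}\big)\cdot\big(\tfrac{2m}{m+1}\nabla u_m^{(m+1)/2}\big)$ after checking $\nabla u_m = \tfrac{2}{m+1}u_m^{(1-m)/2}\nabla u_m^{(m+1)/2}$ and $\nabla u_m^m = \tfrac{2m}{m+1}u_m^{(m-1)/2}\nabla u_m^{(m+1)/2}$, whose product is exactly $\tfrac{4m}{(m+1)^2}|\nabla u_m^{(m+1)/2}|^2$.

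The main obstacle — and the only real content — is the uniform-in-$m$ bound on $\iint_{Q_T}\nabla u_m\cdot\nabla u_m^m\,dx\,dt$ (taking $\psi \equiv 1$ on the support, which dominates the general case by $\|\psi\|_\infty$). This is where Lemma~\ref{be} does the work: from part (\romannumeral2) we have $\|\nabla u_m^{m+k}\|_{L^2(Q_T)}\le C(T)$ for every fixed $k\ge 0$; choosing $k$ so that $m+k$ straddles the exponent $(m+1)/2$ appropriately, or more directly observing that $\iint_{Q_T}\nabla u_m\cdot\nabla u_m^m\,dx\,dt \le \tfrac{4m}{(m+1)^2}\|\nabla u_m^{(m+1)/2}\|_{L^2(Q_T)}^2$ and that the exponent $(m+1)/2$ is covered by the family of estimates in Lemma~\ref{be}(\romannumeral2) (writing $(m+1)/2 = m + k$ with $k = (1-m)/2 < 0$ fails, so instead one uses $\nabla u_m\cdot \nabla u_m^m \le K^{m-1}m|\nabla u_m|^2$ and bounds $\|\nabla u_m\|_{L^2}$ via $u_m^m$ — cleaner: $\iint \nabla u_m\cdot\nabla u_m^m = \iint \tfrac{m}{m+1}\Delta(u_m^{m+1})$-free integration by parts against the equation). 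The cleanest route is to integrate by parts: $\iint_{Q_T}\nabla u_m\cdot\nabla u_m^m\,dx\,dt = -\iint_{Q_T} u_m^m \Delta u_m\,dx\,dt$ won't do because $\Delta u_m$ is not controlled, so instead I test the porous-medium equation $\partial_t u_m = \Delta u_m^m - \nabla\cdot(u_m(K-u_m)\nabla c_m)$ against $u_m^m$ itself. This yields
\begin{equation*}
\iint_{Q_T}|\nabla u_m^m|^2\,dx\,dt = -\int_\Omega \tfrac{1}{m+1}u_m^{m+1}\Big|_0^T\,dx - \iint_{Q_T} u_m(K-u_m)\nabla c_m\cdot\nabla u_m^m\,dx\,dt,
\end{equation*}
and the right-hand side is bounded uniformly in $m$ using $0\le u_m\le K$, estimate~\eqref{e3} for $\nabla c_m$, Young's inequality to absorb $\|\nabla u_m^m\|_{L^2(Q_T)}$, and the $L^1$ initial bound \eqref{a2} on $P_{m,0}$ (equivalently $\int u_{m,0}^{m-1} \le C$, which controls $\int u_{m,0}^{m+1}$ since $u_{m,0}\le K$). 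Then $\iint_{Q_T}\nabla u_m\cdot\nabla u_m^m\,dx\,dt = \tfrac{4m}{(m+1)^2}\iint_{Q_T}|\nabla u_m^{(m+1)/2}|^2\,dx\,dt$; rewriting $|\nabla u_m^{(m+1)/2}|^2 = \tfrac{(m+1)^2}{4m^2}u_m^{1-m}|\nabla u_m^m|^2$ is circular, so instead I note directly $\nabla u_m\cdot\nabla u_m^m = m u_m^{m-1}|\nabla u_m|^2$ and $|\nabla u_m^m|^2 = m^2 u_m^{2m-2}|\nabla u_m|^2$, whence $\nabla u_m\cdot\nabla u_m^m = u_m^{1-m}|\nabla u_m^m|^2/m \le K^{1-m}|\nabla u_m^m|^2/m$ on $\{u_m \le K\}$ — but $K^{1-m}$ blows up if $K<1$; for $K\ge1$ this is immediate, and for $K<1$ one instead restricts to $\{u_m \le 1\}$ versus $\{1 < u_m \le K\}$ is empty, so $u_m\le \min(1,K)$ on the relevant set and one uses $\nabla u_m\cdot\nabla u_m^m \le \tfrac{m}{m+1}\nabla u_m^{m+1}\cdot\nabla u_m^0$... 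Rather than wrestle with constants, the robust statement is: $\iint_{Q_T}\nabla u_m\cdot\nabla u_m^m\,dx\,dt \le \iint_{Q_T}|\nabla u_m^{(m+1)/2}|^2\,dx\,dt$ up to a factor $\le 1$, and $\|\nabla u_m^{(m+1)/2}\|_{L^2(Q_T)}$ is bounded uniformly by Lemma~\ref{be}(\romannumeral2)–(\romannumeral3) reasoning applied to the exponent $(m+1)/2$ (which for $m\ge 2$ lies in $[m-1,m]\subset[1,m]$, precisely the range those estimates cover). Concluding, $|\langle M_m,\psi\rangle| \le \|\psi\|_{C_0} \cdot C(T)$ for all $m>1$, so $\{M_m\}$ is a uniformly bounded family of non-negative Radon measures on $[0,\infty)\times\Omega\times\mathbb{R}_+$, which is the claim; in particular, up to a subsequence, $M_m \rightharpoonup M_\infty$ weakly-$*$ in the sense of measures, to be used in the passage to the limit in the kinetic formulation~\eqref{kf}.
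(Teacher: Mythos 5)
Your framing is fine: reducing the claim to a uniform-in-$m$ bound on $\iint_{Q_T}\nabla u_m\cdot\nabla u_m^m\,dx\,dt$, with non-negativity coming from $\nabla u_m\cdot\nabla u_m^m=mu_m^{m-1}|\nabla u_m|^2\ge 0$, is exactly the right reduction and is what the paper does. The problem is that you never actually establish that bound. Testing the equation against $u_m^m$ gives you $\iint_{Q_T}|\nabla u_m^m|^2$, which is a different quantity; converting it into $\nabla u_m\cdot\nabla u_m^m=u_m^{1-m}|\nabla u_m^m|^2/m$ fails wherever $u_m$ is small (you notice the blow-up for $K<1$, but the real obstruction is the degenerate set $\{u_m<1\}$, present for every $K$). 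Your fallback --- that $\|\nabla u_m^{(m+1)/2}\|_{L^2(Q_T)}$ is covered by Lemma~\ref{be}(\romannumeral2)--(\romannumeral3) --- is false: those estimates control $\nabla u_m^{q}$ only for $q\ge m-1$, since they rest on writing $\nabla u_m^{q}=\tfrac{q(m-1)}{m}\,u_m^{\,q-(m-1)}\nabla P_m$ and using $0\le u_m\le K$, which requires the exponent $q-(m-1)$ to be non-negative; the exponent $(m+1)/2$ satisfies $(m+1)/2\ge m-1$ only for $m\le 3$ (and your parenthetical claim that $(m+1)/2\in[m-1,m]$ for $m\ge 2$ is simply wrong). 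So the central estimate of the lemma is left unproved.

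The fix is to test the equation against $u_m$ itself, not $u_m^m$: then $\partial_t u_m\cdot u_m$ gives $\tfrac12\tfrac{d}{dt}\int_\Omega u_m^2$, the diffusion term gives exactly the dissipation $\int_\Omega\nabla u_m\cdot\nabla u_m^m$, and the chemotaxis term is handled by writing $u_m(K-u_m)\nabla u_m=\nabla\bigl(\tfrac{K}{2}u_m^2-\tfrac13 u_m^3\bigr)$ and integrating by parts against $-\Delta c_m=u_m-c_m$, which is bounded since $0\le u_m,c_m\le K$. Integrating in time then yields $\iint_{Q_T}\nabla u_m\cdot\nabla u_m^m\,dx\,dt\le C(T)$ uniformly in $m$, which is the estimate your argument needs; this is the paper's proof.
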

\begin{proof}
Multiplying \eqref{deq} by $u_m$ and integrating on $\Omega$, we get
\[\begin{aligned}
\frac{1}{2}\frac{d}{d t}\int_\Omega u_m^2\ dx+\int_\Omega \nabla u_m\cdot\nabla u_m^m\  dx
&=\int_\Omega u_m(K-u_m)\nabla c_m\cdot \nabla u_m\ dx\\
&=\int_\Omega \nabla c_m\cdot\nabla(\frac{1}{2}u_m^2-\frac{1}{3}u_m^3)\ dx\\
&=\int_\Omega (u_m-c_m)\cdot(\frac{1}{2}u_m^2-\frac{1}{3}u_m^3)\ dx\le C.
\end{aligned}\]
Once again, integrating in $(0,T)$ with respect to $t$ yields 
\begin{equation*}\label{rd}\int\hskip-4pt\int_{Q_T} \nabla u_m\cdot\nabla u_m^m\  dx\ dt\le C(T)\Longrightarrow
||M_m(t,x,\xi)||_{L^1(Q_T\times\mathbb{R}_+)}\le C(T).
\end{equation*}
\end{proof}

\subsection{Passing to the limit}

 We first give the estimates on the support property of the limiting kinetic function $f_\infty$.
\begin{lemma}\label{finfty}
 Let $f_\infty$ be the $*-$weak limit of $f_m$ in $L^\infty(Q_T\times \mathbb{R})$,  up to a subsequence,  as $m\to\infty$, then 
\begin{equation}\label{suppf}
  \text{supp}(f_\infty)\subset Q_T\times [0,1],  
\end{equation}
$$0\leq f_\infty\leq \textbf{1}_{\xi< 1},\quad\text{a.e. } (x,t,\xi)\in Q_T\times\mathbb{R}_+.$$
\end{lemma}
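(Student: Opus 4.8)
The plan is to obtain the two claimed facts -- $\mathrm{supp}(f_\infty)\subset Q_T\times[0,1]$ and $0\le f_\infty\le \mathbf{1}_{\xi<1}$ -- as direct consequences of the pointwise structure of $f_m$ together with the already-established $L^2$-decay of the overshoot $(u_m-1)_+$ from Lemma~\ref{be}(iv). First I would record the trivial a priori bounds: since $f_m(t,x,\xi)=\mathbf{1}_{\xi<u_m(t,x)}$ and $0\le u_m\le K$ by \eqref{e1}, we have $0\le f_m\le 1$ everywhere, and $f_m(t,x,\xi)=0$ for every $\xi\ge K$; hence $\mathrm{supp}(f_m)\subset Q_T\times[0,K]$ uniformly in $m$, which already passes to the weak-$*$ limit to give $0\le f_\infty\le 1$ and a compact-in-$\xi$ support. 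The content of the lemma is to sharpen the upper endpoint from $K$ down to $1$ (recall only $K>1$ is the nontrivial regime here), i.e. to kill the mass of $f_\infty$ on the slab $\xi\ge 1$.

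For that, fix any $\delta>0$ and any nonnegative test function $\psi\in C_c(Q_T\times(1,\infty))$ supported in $\{\xi\ge 1+\delta\}$. Then
\begin{equation*}
\iint_{Q_T}\!\!\int_{\mathbb{R}_+} f_m\,\psi \,d\xi\,dx\,dt
=\iint_{Q_T}\!\!\int_{\mathbb{R}_+}\mathbf{1}_{1+\delta\le\xi<u_m}\,\psi\,d\xi\,dx\,dt,
\end{equation*}
and the integrand is supported in the set $\{u_m>1+\delta\}$, on which $(u_m-1)_+>\delta$. Therefore the whole expression is bounded by $\|\psi\|_\infty\,(\text{diam of }\xi\text{-support})\cdot |\{u_m>1+\delta\}\cap\mathrm{supp}_{x,t}\psi|$, and by Chebyshev $|\{u_m>1+\delta\}|\le \delta^{-2}\|(u_m-1)_+\|_{L^2(Q_T)}^2\to 0$ as $m\to\infty$ by Lemma~\ref{be}(iv). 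Passing to the weak-$*$ limit, $\iint\!\!\int f_\infty\psi=0$ for every such $\psi$, so $f_\infty=0$ a.e. on $\{\xi>1\}$; letting $\delta\downarrow 0$ (or just exhausting $(1,\infty)$ by such $\psi$) gives $\mathrm{supp}(f_\infty)\subset Q_T\times[0,1]$. Combined with $0\le f_\infty\le 1$ this yields $0\le f_\infty\le\mathbf{1}_{\xi<1}$ a.e., noting that $f_\infty$ can be taken to vanish at $\xi=1$ since that is a null set in $\xi$.

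I do not expect a serious obstacle here: the only mild care needed is the bookkeeping in approximating the indicator $\mathbf{1}_{\xi\ge 1}$ in the $\xi$-variable by admissible continuous test functions and confirming that the weak-$*$ convergence in $L^\infty(Q_T\times\mathbb{R}_+)$ (as stated in Proposition~\ref{strong convergence of u}) legitimately tests against such compactly supported functions -- which it does, since they lie in $L^1(Q_T\times\mathbb{R}_+)$. The one genuinely substantive input is Lemma~\ref{be}(iv), i.e.\ that the supersaturation $(u_m-1)_+$ vanishes in $L^2$; everything else is elementary measure theory. (Implicitly this lemma is only interesting when $K>1$; for $K\le 1$ one already has $u_m\le K\le 1$, so $\mathrm{supp}(f_m)\subset Q_T\times[0,1]$ exactly, and the statement is immediate.)
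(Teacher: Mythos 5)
Your proposal is correct and takes essentially the same approach as the paper: both test $f_m$ against nonnegative functions supported in $\{\xi>1\}$ and invoke Lemma~\ref{be}(iv), $\|(u_m-1)_+\|_{L^2(Q_T)}\to 0$, to show the limiting integral vanishes, then combine with the trivial bound $0\le f_m\le 1$. The only cosmetic difference is that you control the space-time superlevel set $\{u_m>1+\delta\}$ by Chebyshev, whereas the paper bounds $\int_{1}^{\max\{u_m,1\}}\phi_2\,d\xi$ pointwise by $C(u_m-1)_+$ and concludes directly.
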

\begin{proof}
We first verify \eqref{suppf}. Indeed,
for any non-negative test functions $\phi_1\in C_0^\infty(Q_T)$ and $\phi_2\in C_0^\infty((1,\infty))$, we use (\romannumeral4) in Lemma~\ref{be} and get 
\[\begin{aligned}
\iiint_{Q_T\times\mathbb{R}_+}f_\infty \phi_1\phi_2 dxdtd\xi &=\lim\limits_{m\to\infty}\iiint_{Q_T\times\mathbb{R}_+}f_m \phi_1\phi_2 dxdtd\xi\\
&=\lim\limits_{m\to\infty}\iint_{Q_T} \phi_1\int_{1}^{\max\{u_m(x,t),1\}}\phi_2d\xi dxdt\\
&\leq \lim\limits_{m\to\infty}C\iint_{Q_T} \phi_1(u_m-1)_+dxdt=0.
\end{aligned}\]  
It  further holds 
\[0\leq f_\infty\leq \textbf{1}_{\xi< 1},\quad\text{a.e. }(x,t)\in Q_T,\ \xi\in\mathbb{R}_+.\] \hfill
\end{proof}

\vspace{3mm}

As  $P_\infty(1-\rho_\infty)=0$ in Lemma~\ref{cr}, we describe the similar results for the limiting kinetic variable $f_\infty$ to the limiting pressure $P_\infty$. 
\begin{lemma}\label{finftyp} Assume that 
 $P_\infty\in L^2(0,T;H^1(\Omega))$ and  $(1-u_\infty)P_\infty=0$ (Lemma~\ref{cr}) hold. Then, the limiting kinetic variable $f_\infty$ (Lemma~\ref{finfty}) satisfies
\[(f_\infty-\textbf{1}_{\xi<1})P_\infty=0,\quad (f_\infty -\textbf{1}_{\xi< 1})\textbf{1}_{\mathcal{P}(t)}=0,\quad (f_\infty -\textbf{1}_{\xi<1})\nabla P_\infty=0,\quad\text{a.e. } (x,t,\xi)\in Q_T\times\mathbb{R}_+.\]
where the domain $\mathcal{P}(t)$ is defined by \eqref{support}.
\end{lemma}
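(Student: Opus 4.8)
The plan is to transfer the three identities from the limiting pressure $P_\infty$ to the limiting kinetic function $f_\infty$ by exploiting the pointwise relation between $f_m$ and $u_m$ together with the convergences established in Lemma~\ref{cr} and Lemma~\ref{be}. The key observation is the elementary pointwise bound
\[
0 \le (f_m - \textbf{1}_{\xi<1}) \le \textbf{1}_{1 \le \xi < u_m} \quad \text{for } \xi \ge 0,
\]
more precisely $(f_m-\textbf{1}_{\xi<1})=\textbf{1}_{1\le \xi<u_m}-\textbf{1}_{u_m\le \xi<1}$, so that integrating in $\xi$ gives $\int_{\mathbb{R}_+}|f_m-\textbf{1}_{\xi<1}|\,d\xi = |u_m-1| = (u_m-1)_+ + (1-u_m)_+$. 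This is the bridge: the $\xi$-integral of the kinetic discrepancy is exactly the density discrepancy, which is controlled by Lemma~\ref{be}(iv) on the set where $P_\infty$ lives.

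First I would prove $(f_\infty-\textbf{1}_{\xi<1})P_\infty=0$. Since $P_\infty\in L^2(Q_T)$ is non-negative (being a limit of non-negative pressures) and independent of $\xi$, I test the identity against a non-negative $\phi(x,t)\psi(\xi)$ with $\psi\ge 0$ compactly supported, and use $f_m\rightharpoonup f_\infty$ $\ast$-weakly to write
\[
\iiint_{Q_T\times\mathbb{R}_+}(f_\infty-\textbf{1}_{\xi<1})P_\infty\,\phi\psi\,dx\,dt\,d\xi
=\lim_{m\to\infty}\iiint_{Q_T\times\mathbb{R}_+}(f_m-\textbf{1}_{\xi<1})P_\infty\,\phi\psi\,dx\,dt\,d\xi.
\]
Bounding the right-hand integrand in absolute value by $P_\infty\phi\|\psi\|_\infty\textbf{1}_{[1-\delta,1+\delta']\ni\xi\text{ or }u_m\in(\cdot)}$ and integrating in $\xi$ first, the $m$-th term is dominated by $C\|\psi\|_\infty\iint_{Q_T}P_\infty\phi\,(|u_m-1|)\,dx\,dt$. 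Splitting $|u_m-1|=(u_m-1)_++(1-u_m)_+$: the first piece goes to zero by Lemma~\ref{be}(iv) and $P_\infty\in L^2$; the second piece converges to $\iint_{Q_T}P_\infty\phi\,(1-u_\infty)\,dx\,dt=0$ by weak convergence of $u_m$ and the already-known relation $(1-u_\infty)P_\infty=0$ from Lemma~\ref{cr}. Hence the limit vanishes for all such test functions, giving the first identity a.e. The identity $(f_\infty-\textbf{1}_{\xi<1})\textbf{1}_{\mathcal P(t)}=0$ follows immediately: on $\mathcal P(t)=\{P_\infty(\cdot,t)>0\}$ the first identity forces $f_\infty=\textbf{1}_{\xi<1}$ a.e., which is exactly what $\textbf{1}_{\mathcal P(t)}$ multiplication records. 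For the gradient identity $(f_\infty-\textbf{1}_{\xi<1})\nabla P_\infty=0$, I would repeat the same weak-limit argument componentwise, testing against $\phi\psi$, using $|f_m-\textbf{1}_{\xi<1}|$ integrated in $\xi$ equals $|u_m-1|$ and the weighted strong convergences \eqref{ums}, \eqref{123} (namely $u_m\sqrt{|\partial_i P_\infty|}\to\sqrt{|\partial_i P_\infty|}$) together with $(1-u_\infty)\nabla P_\infty=0$ from \eqref{sr1}; concretely, $\iint_{Q_T}|u_m-1|\,|\partial_i P_\infty|\,\phi\,dx\,dt \le \iint (u_m-1)^2|\partial_i P_\infty|\,dx\,dt \cdot\|\phi\|_\infty + \cdots \to 0$ exactly as in the proof of \eqref{123}.

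The main obstacle, and the only genuinely delicate point, is handling the sign and the $\xi$-localization of $f_m-\textbf{1}_{\xi<1}$ simultaneously: unlike $(u_m-1)_+$, which has a fixed sign, the kinetic discrepancy $f_m-\textbf{1}_{\xi<1}$ changes sign depending on whether $u_m\gtrless 1$, so one cannot simply drop absolute values, and one must be careful that multiplication by $P_\infty$ (or $\nabla P_\infty$), which does not depend on $\xi$, commutes properly with the $\xi$-integration used to collapse the kinetic variable back to the density. I would resolve this by always reducing to the scalar quantity $\int_{\mathbb{R}_+}|f_m-\textbf{1}_{\xi<1}|\,d\xi=|u_m-1|$ before passing to the limit, so that all estimates are carried out at the level of the density, where Lemma~\ref{be}(iv), \eqref{ums} and \eqref{123} apply directly; the measurability of $\mathcal P(t)$ and the fact that $\textbf{1}_{\mathcal P(t)}\le P_\infty/\varepsilon$ on $\{P_\infty>\varepsilon\}$ (letting $\varepsilon\to 0^+$) then gives the middle identity from the first without additional work.
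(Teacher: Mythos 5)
Your argument is correct, but it takes a genuinely different route from the paper. The paper works entirely at the level of the limit objects: it truncates $P_{\infty,R}=\min\{P_\infty,R\}$, passes to the limit in the identity $u_mP_{\infty,R}=\int_0^K f_m\,d\xi\,P_{\infty,R}$ to get $\int_0^K f_\infty\,d\xi\,P_{\infty,R}=P_{\infty,R}=\int_0^K\textbf{1}_{\xi<1}\,d\xi\,P_{\infty,R}$, and then crucially invokes the one-sided pointwise bound $0\le f_\infty\le\textbf{1}_{\xi<1}$ from Lemma~\ref{finfty} to upgrade equality of $\xi$-integrals to pointwise equality; the gradient identity is then obtained by the chain-rule trick on powers $\nabla P_{\infty,R}^{\alpha}$ with $\alpha\to1^+$ and $R\to\infty$, not by a fresh compactness argument. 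You instead work at the level of the approximating sequence, using the exact identity $\int_{\mathbb{R}_+}|f_m-\textbf{1}_{\xi<1}|\,d\xi=|u_m-1|$ to collapse the kinetic discrepancy to the density discrepancy, which you control by Lemma~\ref{be}(iv), the relation $(1-u_\infty)P_\infty=0$, and (for the gradient) the convergence $\iint(u_m-1)^2|\partial_iP_\infty|\,dx\,dt\to0$ underlying \eqref{123}. What your approach buys is that it never needs the ordering $f_\infty\le\textbf{1}_{\xi<1}$ nor the truncation/power manipulations, treating all three identities by one uniform mechanism; what the paper's approach buys is that it avoids your density-of-tensor-products step (needed to pass from $\iiint(f_\infty-\textbf{1}_{\xi<1})P_\infty\phi\psi=0$ for all $\phi,\psi$ to the a.e.\ statement) and keeps every manipulation pointwise. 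Two small writing points: your opening inequality $0\le f_m-\textbf{1}_{\xi<1}\le\textbf{1}_{1\le\xi<u_m}$ is false when $u_m<1$ (you correct it immediately with the signed decomposition, which is what you actually use), and in the gradient step the bound as written is not a literal inequality --- you need Cauchy--Schwarz, $\iint|u_m-1|\,|\partial_iP_\infty|\phi\le\big(\iint(u_m-1)^2|\partial_iP_\infty|\phi\big)^{1/2}\big(\iint|\partial_iP_\infty|\phi\big)^{1/2}$, before invoking the convergence to zero of the first factor; with that repair the argument closes.
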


\begin{proof}
For the case $0<K<1$, $P_\infty=0$, the results evidently hold. The proof of the case $K\geq1 $ is  divided into three steps.

\noindent\emph{Step 1.} Let $P_{\infty,R}=\min\{P_\infty, R\}$ for any $R>0$, up to a subsequence, it concludes from Lemma~\ref{cr} that
\begin{equation*}
u_mP_{\infty,R}\to P_{\infty,R},\quad \text{a.e. }(x,t)\in Q_T,\text{ as }m\to\infty.
\end{equation*}
To begin with, we show 
\[\int_{0}^K f_\infty(x,t,\xi)d\xi P_{\infty,R}=P_{\infty,R},\quad \text{a.e. in }Q_T.\]
Since 
\[u_m(x,t)P_{\infty,R}(x,t)=\int_{0}^K f_m(x,t,\xi)d\xi P_{\infty,R}(x,t),\quad \text{ almost everywhere in }Q_T. \]
We take the weak limit on the above equality, and get 
\[u_\infty P_{\infty,R}(x,t)=P_{\infty,R}(x,t)=\int_0^K f_\infty(x,t,\xi)d\xi P_{\infty,R}(x,t),\quad \text{a.e. }(x,t)\in Q_T.\]

\noindent\emph{Step 2.} We use
\begin{equation*}
\begin{aligned}
 \int_0^K\textbf{1}_{\xi< 1} d\xi P_{\infty,R}=P_{\infty,R},
\end{aligned}
\end{equation*}
and conclude
\begin{equation*}
    \int_0^Kf_\infty(x,t,\xi)d\xi P_{\infty,R}=\int_0^K\textbf{1}_{\xi< 1} d\xi P_{\infty,R},\quad \text{in  }\mathcal{D}'(Q_T).
\end{equation*}
Thanks to the inequality $0\leq f_\infty \leq \textbf{1}_{\xi<1}$ (Lemma~\ref{finfty}), 
it concludes 
$$(f_\infty-\textbf{1}_{\xi< 1})P_{\infty,R}=0,\quad \text{a.e. } (x,t,\xi)\in Q_T\times\mathbb{R}_+.$$
For any $\alpha>0$, we have 
$$(f_\infty-\textbf{1}_{\xi< 1})P_{\infty,R}^{\alpha}=0,\quad \text{a.e. }(x,t,\xi)\in Q_T\times\mathbb{R}_+.$$
Let $\alpha\to 0^+$, then we obtain 
$$(f_\infty-\textbf{1}_{\xi< 1}) \textbf{1}_{\mathcal{P}(t)}=0,\quad \text{a.e. }(x,t,\xi)\in Q_T\times\mathbb{R}_+.$$
\emph{Setp 3.} For $\alpha>1$, it follows 
$$f_\infty\nabla P_{\infty,R}^{\alpha}=\alpha f_\infty P_{\infty,R}^{\alpha-1}\nabla P_{\infty,R}=\textbf{1}_{\xi< 1} \nabla P_{\infty,R}^\alpha,\quad \text{a.e. }(x,t,\xi)\in Q_T\times\mathbb{R}_+.$$
After taking $\alpha\to1^+$, we get 
$$(f_\infty-\textbf{1}_{\xi< 1})\nabla P_{\infty,R}=0,\quad \text{a.e. }(x,t,\xi)\in Q_T\times\mathbb{R}_+.$$
Finally, let $R\to+\infty$, it holds 
\[f_\infty  P_\infty=\textbf{1}_{\xi<1}P_\infty,\quad f_\infty  \nabla P_\infty=\textbf{1}_{\xi<1}\nabla P_\infty,\quad\text{a.e. } (x,t,\xi)\in Q_T\times\mathbb{R}_+.\]
\end{proof}

To take the limit on the kinetic formulation~\eqref{kf} as $m\to\infty$, we recall the weak convergence $f_\infty$ from the bound of $f_m$. In addition, $M_m(t,x,\xi)=0$ also holds for  $\xi>K$. Hence, for $f_\infty=f_\infty(t,x,\xi)\in [0,1]$, and a non-negative measure $M_\infty=M_\infty(t,x,\xi)$, there exists subsequences, still denoted by $f_m$ and $M_m$, as $m\to\infty$  such that
\begin{equation}\label{fweak}
  f_m\rightharpoonup f_\infty,\quad *-\text{weak in } L^\infty(Q_T\times \mathbb{R}_+),  
\end{equation}
\[M_m\rightharpoonup M_\infty,\quad \text{weak in }\mathcal{M}^1(Q_T).\]

On the kinetic formulation \eqref{kf}, we study the  limits of  its nonlinear parts $\nabla_x\cdot(\partial_\xi f_m\nabla u_m^m)$ and $g'(\xi)\nabla_x c_m\cdot \nabla_x f_m$ respectively in the following.

\paragraph{For $\nabla_x\cdot(\partial_\xi f_m\nabla u_m^m)$.}

For any smooth test function  $\varphi=\varphi(x,t,\xi)\in C_0^\infty(Q_T\times \mathbb{R}_+)$, by the strong convergence of $\nabla u_m^m$ (Lemma~\ref{pressure}), we have 
\begin{equation*}\label{f1}
\begin{aligned}
\iiint_{Q_T\times \mathbb{R}_+}\nabla\cdot(\partial_\xi f_m\nabla u_m^m)\varphi dxdtd\xi&=\iiint_{Q_T\times \mathbb{R}_+} f_m\nabla u_m^m\cdot\nabla\partial_\xi \varphi dxdtd\xi\\
&\to \iiint_{Q_T\times \mathbb{R}_+} f_\infty \nabla P_\infty \cdot\nabla\partial_\xi \varphi dxdtd\xi,\ \text{ as }m\to\infty.
\end{aligned}
\end{equation*}
We use the equalities $f_\infty\nabla P_\infty=\textbf{1}_{\xi<1}\nabla P_\infty$ (Lemma~\ref{finftyp}) and $\partial_\xi(\textbf{1}_{\xi<1})=-\delta_{\xi=1}$,
it follows from the above formula that 
\begin{equation}\label{limit1}\nabla(\partial_\xi f_m\nabla u^m_m)\rightharpoonup- \delta_{\xi=1}\Delta P_\infty,\text{ in }\mathcal{D}'(Q_T\times \mathbb{R}_+),\text{ as }m\to\infty. \end{equation}

\paragraph{For $g'(\xi)\nabla_x c_m\cdot \nabla_x f_m$.}  
We directly calculate
\[\begin{aligned}
g'(\xi)\nabla_x c_m\cdot \nabla_x f_m
&=\nabla\cdot(g'(\xi)\nabla_x c_m\ f_m) 
-g'(\xi)\Delta_x c_m\ f_m\\
&=\nabla\cdot(g'(\xi)\nabla_x c_m\ f_m) 
+(u_m-c_m)g'(\xi)f_m.
\end{aligned}\]
Again, based on the weak-strong products limit, in the sense of distribution, as $m\to\infty$, it holds in the sense of distribution that 
\[\begin{aligned}
\nabla\cdot(g'(\xi)\nabla_x c_m\ f_m) &\rightharpoonup \nabla\cdot(g'(\xi)\nabla_x c_\infty\ f_\infty),\\
c_mg'(\xi)f_m&\rightharpoonup c_\infty g'(\xi)f_\infty.
\end{aligned}\]
Notice that the remaining term $u_mg'(\xi)f_m$ is the difficulty due to the absence of strong limit. To deal with it, we recall that  $||u_mf_m||_{L^\infty}$ is bounded, so that there exists a limit function $\rho_\infty=\rho_\infty(t,x,\xi)\in [0,1]$ such that
\begin{equation}\label{roh_inf}
u_mf_m\rightharpoonup \rho_\infty,\quad *-\text{weak in } L^\infty(Q_T\times \mathbb{R}_+),\quad\text{as }m\to\infty.
\end{equation}
Then, we take the limit as $m\to\infty$ and have
\begin{equation}\label{limit2}
g'(\xi)\nabla_x c_m\cdot \nabla_x f_m\rightharpoonup
g'(\xi)\nabla_x c_\infty\cdot \nabla_x f_\infty
+g'(\xi)(\rho_\infty-c_\infty f_\infty),\quad\text{in }\mathcal{D}'(Q_T\times\mathbb{R}_+).
\end{equation}

In addition, the limit $\rho_\infty$ can be identified as follows. For any given $\xi\geq 0$, we define a function
\[S(z):=z\cdot\textbf{1}_{0\le\xi\le z}, \quad z\in[0,+\infty).\]
Thanks to the result on weak convergence  \eqref{f_weak}, we have
\[
S(u_m)=\int_0^{u_m}S'(\eta)\ d\eta=\int_0^\infty S'(\eta)f_m\ d\eta\rightharpoonup \int_0^\infty S'(\eta)f_\infty(t,x,\eta)\ d\eta,\quad\text{as }m\to \infty.
\]
Using \eqref{roh_inf}, we find that $u_\infty$ satisfies
\[
S(u_m)=u_mf_m\rightharpoonup \rho_\infty=\int_0^\infty (\textbf{1}_{0\le\xi\le \eta}+\eta\delta_{\xi=\eta})f_\infty(t,x,\eta)\ d\eta,\quad\text{as }m\to \infty.
\]
Consequently, we get
\begin{equation}\label{rho_}
\rho_\infty(t,x,\xi)-\xi f_\infty(t,x,\xi)=\int_\xi^\infty f_\infty(t,x,\eta)\ d\eta.
\end{equation}

The following lemma of estimating $\rho_\infty-u_\infty f_\infty$ is given in \cite[Lemma 3.1]{bp2009}, and we omit the details.
\begin{lemma}\label{proved}
For any given $T>0$, and $\rho_\infty$ is given in \eqref{rho_},  it holds for some $C>0$ that 
\begin{equation*}\label{inequality}
|\rho_\infty(t,x,\xi)-u_\infty(t,x)f_\infty(t,x,\xi)|\le Cf_\infty(t,x,\xi)(1-f_\infty(t,x,\xi)),\quad \text{ a.e. } (x,t,\xi)\in Q_T\times \mathbb{R}_+. 
\end{equation*}
\end{lemma}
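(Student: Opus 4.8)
The plan is to reduce the statement to a one‑dimensional inequality at a fixed (a.e.) point $(t,x)\in Q_T$ and then exploit the fact that $f_\infty(t,x,\cdot)$ is monotone non‑increasing, a property inherited from the indicator functions $f_m=\textbf{1}_{\xi<u_m}$. Fix such a point and abbreviate $h(\xi):=f_\infty(t,x,\xi)$, $u:=u_\infty(t,x)$, $\rho(\xi):=\rho_\infty(t,x,\xi)$. First I would record three structural facts: (a) by Lemma~\ref{finfty}, $0\le h\le1$ and $h(\xi)=0$ for $\xi\ge1$; (b) since each $f_m(t,x,\cdot)$ is non‑increasing, $\partial_\xi f_m=-\delta_{\xi=u_m}\le0$ as a measure, and passing to the weak‑$*$ limit (the distributional limit of non‑positive measures, tested against non‑negative functions, is non‑positive) gives $\partial_\xi f_\infty\le0$ in $\mathcal{D}'$, i.e.\ $h$ agrees a.e.\ with a non‑increasing function; (c) from $u_m=\int_0^\infty f_m(t,x,\xi)\,d\xi$ with $f_m$ supported in $\xi\in[0,K]$, testing $f_m\rightharpoonup f_\infty$ against $\phi(t,x)\textbf{1}_{[0,K]}(\xi)\in L^1$ and comparing with $u_m\rightharpoonup u_\infty$ yields $u=\int_0^\infty h(\xi)\,d\xi$.

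Next I would do the algebra. Inserting \eqref{rho_}, that is $\rho(\xi)=\xi h(\xi)+\int_\xi^\infty h(\eta)\,d\eta$, together with $u=\int_0^\xi h+\int_\xi^\infty h$ from (c), a short computation gives
\[
\rho(\xi)-u\,h(\xi)=h(\xi)\Big(\xi-\int_0^\xi h(\eta)\,d\eta\Big)+\big(1-h(\xi)\big)\int_\xi^\infty h(\eta)\,d\eta=h(\xi)\int_0^\xi\big(1-h(\eta)\big)\,d\eta+\big(1-h(\xi)\big)\int_\xi^1 h(\eta)\,d\eta,
\]
where in the last equality I used (a) to replace $\int_\xi^\infty$ by $\int_\xi^1$. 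Both summands are non‑negative, so $|\rho-u\,h|$ equals the right‑hand side.

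Finally I would bound each summand using the monotonicity (b): for $\eta\le\xi$ one has $h(\eta)\ge h(\xi)$, hence $\int_0^\xi\big(1-h(\eta)\big)\,d\eta\le\xi\big(1-h(\xi)\big)\le1-h(\xi)$; for $\eta\in[\xi,1]$ one has $h(\eta)\le h(\xi)$, hence $\int_\xi^1 h(\eta)\,d\eta\le(1-\xi)h(\xi)\le h(\xi)$. Substituting,
\[
|\rho_\infty(t,x,\xi)-u_\infty(t,x)f_\infty(t,x,\xi)|\le h(\xi)\big(1-h(\xi)\big)\xi+\big(1-h(\xi)\big)h(\xi)(1-\xi)=f_\infty(1-f_\infty),
\]
which is the asserted bound (one may take $C=1$, or $C=K$ if one keeps the support in $[0,K]$ rather than $[0,1]$); the range $\xi\ge1$ is trivial since then $h(\xi)=0$ and $\rho(\xi)=\int_\xi^\infty h=0$.

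The only delicate point is step (b): showing that monotonicity in $\xi$ is preserved in the weak‑$*$ limit. I expect this to be the main — indeed essentially the only — obstacle; besides the measure‑theoretic argument above, it can also be obtained by testing $f_m$ against $\psi(t,x)\big(\zeta_\varepsilon(\xi-a)-\zeta_\varepsilon(\xi-b)\big)$ with $a<b$, $\psi\ge0$, $\zeta_\varepsilon$ a non‑negative mollifier, and letting $\varepsilon\to0$. Everything else is the elementary two‑line computation above, which explains why \cite{bp2009} records the result without proof.
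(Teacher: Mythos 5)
Your proof is correct, and it supplies exactly the argument that the paper omits: the paper simply cites \cite[Lemma 3.1]{bp2009}, and your decomposition
$\rho_\infty-u_\infty f_\infty=f_\infty\int_0^\xi(1-f_\infty(\eta))\,d\eta+(1-f_\infty)\int_\xi^\infty f_\infty(\eta)\,d\eta$ together with the monotonicity of $f_\infty$ in $\xi$ is precisely the computation of that reference. The one point you flag as delicate --- that $\xi\mapsto f_\infty(t,x,\xi)$ is non-increasing in the limit --- is already recorded in the paper as \eqref{finftyd} ($\partial_\xi f_\infty=-\nu\le 0$), and your identification $u_\infty=\int_0^\infty f_\infty\,d\xi$ and the reduction of $\int_\xi^\infty$ to $\int_\xi^1$ via Lemma~\ref{finfty} are both sound, so the constant $C=1$ indeed works.
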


\vspace{3mm}

\noindent{\bf Passing to the limit on \eqref{kf}.} For $K>0$, combining the convergence results \eqref{limit1},\eqref{limit2}, we take $m\to\infty$ in \eqref{kf}, it holds in the distribution sense that
\begin{equation}\label{pass}
 \partial_t f_\infty+(\xi-c_\infty)g(\xi)\partial_\xi f_\infty +g'(\xi)\nabla_x c_\infty\cdot \nabla f_\infty
 +g'(\xi)(\rho_\infty-u_\infty f_\infty)=\delta_{\xi=1}\Delta P_\infty+\partial_{\xi}M_\infty(t,x,\xi).
\end{equation}
In particular, for $K\leq 1$, the vanishing  of the stiff pressure effect (Proposition~\ref{complementarityr}) yields 
\begin{equation*}
 \partial_t f_\infty+(\xi-c_\infty)g(\xi)\partial_\xi f_\infty +g'(\xi)\nabla_x c_\infty\cdot \nabla f_\infty
 +g'(\xi)(\rho_\infty-u_\infty f_\infty)=\partial_{\xi}M_\infty(t,x,\xi).
\end{equation*}
 The functions $f_\infty$ and $M_\infty$ in the above equation
have the following properties
\[
0\le f_\infty\le 1\quad \text{a.e. in }Q_T\times\mathbb{R}_+,\quad 
f_\infty=0\quad\text{when }\xi>K,
\]
\[
M_\infty (t,x,\xi)=0\quad\text{when }\xi>K,\quad
\iiint_{Q_T\times\mathbb{R}_+}M_\infty(t,x,\xi) \ dt\ dx\ d\xi<\infty.
\]
From \eqref{delta1}, there is a probability measure $\nu(t,x,\xi)\geq0$ such that
\begin{equation}\label{finftyd}
\partial_\xi f_\infty=-\nu(t,x,\xi)\le 0,\quad\text{in }\mathcal{D}'(Q_T\times \mathbb{R}_+).
\end{equation}

\subsection{Strong convergence of density.} 

To verify the strong convergence of $\{u_m\}_{m>0}$ for $K\leq1$, based on the kinetic theory~\cite{bp2002,bp2009}, which is equivalent to that the limiting kinetic function $f_\infty$ is a characteristic function, i.e., $f_\infty$ takes the values $0$ or $1$. To this end, the regularized technique is used, we take $\epsilon>0$ arbitrarily, and $\vp^1\in\mathcal{C}^\infty(\mathbb{R})$, $\vp^2\in\mathcal{C}^\infty(\mathbb{R}^{n+1})$. Define the mollifiers 
\begin{equation*}\label{vp_ep}
\varphi_{\epsilon}^1(\xi)=\frac{1}{\epsilon}\varphi^1(\frac{\xi}{\epsilon})\geq 0,\qquad
\varphi^2_{\epsilon}(x,t)=\frac{1}{\epsilon^{n+1}}\varphi^2(\frac{t}{\epsilon},\frac{x}{\epsilon})\geq 0,
\end{equation*}
with $\text{supp}(\varphi_\epsilon^{1})\subset [0,\epsilon)$ and $\text{supp}(\varphi_{\epsilon}^2)\subset B_\epsilon(0)\times(0,\epsilon)$, $\int_{\mathbb{R}_+}\varphi_{\epsilon}^1d\xi=1$ and $\int_{\mathbb{R}^{n+1}}\varphi_{\epsilon}^2dxdt=1$. 
For any given function $f:\mathbb{R}^n\times\mathbb{R}_+\times\mathbb{R}_+\to\mathbb{R}$, we define
\begin{equation*}\label{f_epslon}
f_\epsilon(x,t,\xi)=f\ast\varphi^1_{\epsilon}\ast\varphi_{\epsilon}^2=\int_{\mathbb{R}^n\times\mathbb{R}_+\times\mathbb{R}_+}f(z,\tau,\zeta)\varphi_{\epsilon}^2(x-z,t-\tau)\varphi_{\epsilon}^1(\xi-\zeta)dzd\tau d\zeta.
\end{equation*}

We give the interior domain 
\[\Omega_\epsilon:=\{x\in\Omega: d(x,\partial \Omega)> \epsilon\},\]
and set \[Q_{T,\epsilon}:=\Omega_\epsilon\times[0,T].\]

\vspace{3mm}

Next, we follow \cite{bp2009} and give the proof of Propsoition \ref{strong convergence of u}.
\vspace{3mm}

\noindent\underline{\textbf{Proof of Proposition \ref{strong convergence of u}.}}
We have proved \eqref{f_weak} in \eqref{fweak}.

\vspace{3mm}

\noindent\emph{Step 1, proof of $\eqref{f_def}$.}
We use 
the technique introduced in   \cite{bp2009} by comparing $f_\infty$ and $f_\infty^2$ to prove that $f_\infty$ only takes the values $0$ or $1$ almost everywhere. 
Since $0\le f_{\infty,\epsilon}\le 1$, we have 
$$0\le f_{\infty,\epsilon}-f_{\infty,\epsilon}^2\le 1,\quad -1\leq 1-2f_{\infty,\epsilon}\leq 1.$$
After regularization for \eqref{pass}, $F_\epsilon:=f_{\infty,\epsilon}(1-f_{\infty,\epsilon})\in [0,1]$ is a solution of
\begin{equation}\label{passs}
\begin{aligned}
\partial_t F_\epsilon&+(\xi-c_\infty)g(\xi)\partial_\xi F_\epsilon +g'(\xi)\nabla_x c_\infty\cdot \nabla F_\epsilon
+R_\epsilon=\partial_{\xi}M_{\infty,\epsilon}(1-2f_{\infty,\epsilon})+r_\epsilon,
\end{aligned}
\end{equation}
where $R_{\epsilon}:=\big(g'(\xi)(\rho_\infty-u_\infty f_\infty)\big)_\epsilon$ with  $F_0:=f_{\infty}(1-f_{\infty})$ and the remainder $r_\epsilon$ is
\[\begin{aligned}
r_\epsilon=(\xi-c_\infty)g(\xi)\partial_\xi F_\epsilon-((\xi-c_\infty)g(\xi)\partial_\xi F_0)_\epsilon+g'(\xi)\nabla_x c_\infty\cdot \nabla F_\epsilon-(g'(\xi)\nabla_x c_\infty\cdot \nabla F_0)_\epsilon.
\end{aligned}
\]Integrating \eqref{passs} by part on $\Omega_{\epsilon}\times\mathbb{R}_+$, and we get
\begin{equation*}\begin{aligned}
\frac{d}{dt}\iint_{\Omega_{\epsilon}\times\mathbb{R}_+}&F_\epsilon dxd\xi=-\iint_{\Omega_{\epsilon}\times\mathbb{R}_+}R_\epsilon (1-2f_{\infty,\epsilon})dxd\xi
+\iint_{\Omega_{\epsilon}\times\mathbb{R}_+}F_\epsilon\{\partial_\xi[(\xi-c_\infty)g(\xi)]\\
&+(c_\infty-u_\infty) g'(\xi)\}dxd\xi+
2\iint_{\Omega_{\epsilon}\times\mathbb{R}_+}M_{\infty,\epsilon}\partial_\xi f_\epsilon dxd\xi+\iint_{\Omega_{\epsilon}\times\mathbb{R}_+} r_\epsilon (1-2f_{\infty,\epsilon})dxd\xi\\
&-\iint_{\partial \Omega_{\epsilon}\times\mathbb{R}_+}F_\epsilon g'(\xi)\nabla c_\infty\cdot n_{\Omega_{\epsilon}}(x)dS(x) d\xi.
\end{aligned}\end{equation*}
Combining $0\leq u_\infty,c_\infty\leq 1$ (Lemma~\ref{cr}) and the definition $g(\xi)=\xi(K-\xi)$ means 
$$|\partial_\xi[(\xi-c_\infty)g(\xi)]+\Delta c_\infty g'(\xi)|\leq C,\quad \forall\xi\in[0,K+1].$$ 
We take \eqref{finftyd} into consideration, and get 
\begin{equation*}
2\iint_{\Omega_{\epsilon}\times\mathbb{R}_+}M_{\infty,\epsilon}\partial_\xi f_\epsilon dxd\xi\leq -2\iint_{\Omega_{\epsilon}\times\mathbb{R}_+}M_{\infty,\epsilon}\nu\ast\varphi_\epsilon^1\ast\varphi_\epsilon^2 dxd\xi\leq 0.
\end{equation*}
Because $f_\infty-f_\infty^2$ is concave on $f_\infty$, we use Lemma~\ref{proved} and Jensen's inequality, and then attain 
\[\iint_{\Omega_{\epsilon}\times\mathbb{R}_+}R_\epsilon (1-2f_{\infty,\epsilon})dxd\xi\leq C\iint_{\Omega_{\epsilon}\times\mathbb{R}_+}(f_\infty-f_\infty^2)_\epsilon dxd\xi\leq C\iint_{\Omega_{\epsilon}\times\mathbb{R}_+}F_\epsilon dxd\xi.\]
Hence, we use the above facts and obtain 
\begin{equation*}\begin{aligned}
\frac{d}{dt}\iint_{\Omega_{\epsilon}\times\mathbb{R}_+}F_\epsilon dxd\xi
\le &\underbrace{\|r_\epsilon(t)\|_{L^1(\Omega\times(0,2))}+C\int_{\partial \Omega_{\epsilon}}|\nabla c_\infty\cdot \vec{n}_{\Omega_{\epsilon}}(x)|dS(x)}_{=:U_\epsilon(t)}\\
&+C\iint_{ \Omega_{\epsilon}\times\mathbb{R}_+}F_\epsilon dxd\xi.
\end{aligned}\end{equation*}
By the Gr{\"o}nwall inequality, we get
\begin{equation*}
\begin{aligned}
\iint_{\Omega_{\epsilon}\times\mathbb{R}_+}F_\epsilon \le &e^{Ct}\iint_{\Omega_{\epsilon}\times\mathbb{R}_+}F_\epsilon(t=0)+\int_0^t e^{C(t-s)}U_\epsilon(s) ds.
\end{aligned}
\end{equation*}
In \cite{bp2009}, it has been proved that $r_\epsilon\to0$ in $L^1(
(0,T)\times\Omega\times(0,C)$ for all $T>0$, $C>0$. 
It is true that $\int_{\partial\Omega_{\epsilon}}|\nabla c_\infty\cdot \vec{n}_{\Omega_{\epsilon}}(x)|dS(x)\to0$ as $\epsilon\to0$ because $c_\infty\in W^{2,p}$ for $1\le q<\infty$, $\nabla c_\infty\in \mathcal{C}^{0,\alpha}$ for some $0<\alpha<1$, thus we have $$U_\epsilon\to0, \text{ as }\epsilon\to0.$$ 
Since $\|u_{m,0}-u_{\infty,0}\|_{L^1(\Omega)}\to0$ as $m\to\infty$, we have $f_{\infty}(0,x,\xi)=\textbf{1}_{\xi\leq u_{\infty,0}}$.  
Then, it only remains to prove that $F_\epsilon(t=0)$ converges to $0$. Thanks to \cite[Lemma 4.2.2]{bp2002},  it holds true that  $f_{\infty,\epsilon}(t=0)$ strongly converge to $f_\infty(t=0)=\textbf{1}_{\xi<u_{\infty,0}}$ as $\epsilon\to0$. Since $F_\epsilon\to F_0$ in $L^1_{loc}(0,\infty;L^1(\Omega\times\mathbb{R}_+))$. We deduce that $F_0=0$, so $f_\infty=f_\infty^2$, which implies $f_\infty$ takes $0$ or $1$ for almost everywhere. Since $f_\infty$ is decreasing in $\xi$, it is clear that
\[
f_\infty=\textbf{1}_{\xi<u_\infty(t,x)},
\]
which proves \eqref{f_def}.

\vspace{3mm}

\noindent\emph{Step 2, proof of $\eqref{u_def}$.}
To show \eqref{u_def}, let $S$ be any  convex function with $S'\in L^\infty$ and $S(0)=0$. From~\eqref{f_m}, we know
\begin{equation*}\label{kk1}\begin{aligned}
  \text{w-}\lim S(u_m(t,x))&=\text{w-}\lim\int_{\mathbb{R}_+}S'(\xi)f_m(t,x,\xi)d\xi=\int_{\mathbb{R}_+}S'(\xi)f_\infty(x,\xi)\ d\xi\\&=\int_{\mathbb{R}_+}S'(\xi)\textbf{1}_{\xi< u_\infty(x,t)}\ d\xi=S(u_\infty),
\end{aligned}
\end{equation*}
which means 
\begin{equation*}
u_m\to u_\infty, \text{ a.e. }(x,t)\in Q_T,\ \text{ as   }m\to\infty.
\end{equation*} 
Hence, via the dominated convergence theorem, it follows  
\begin{equation*}
u_m\to u_\infty\quad \text{ strongly in } L^p(Q_T),\ 1\leq p<\infty,\ \text{as }m\to\infty.
\end{equation*}

\vspace{3mm}

\noindent \underline{\bf Proof of Theorem~\ref{hks}}.  The property \eqref{pressurev} can be obtained from Proposition~\ref{complementarityr}. The strong limit of density \eqref{ds} is justified in Propsoition~\ref{strong convergence of u}. Using these and \eqref{hss1}, we get \eqref{hss2}. 


\subsection{Numerical illustrations}
For the solutions $u_m$ satisfying system \eqref{deq} and $u_\infty$ in \eqref{hss1}, we show the time evolution with different exponents $m$. Then, we compare the numerical results with the previously presented analysis.
\begin{figure}[ht]
\centering
\subfigure[$m=2$, $t=0$]{
\includegraphics[width=2.7cm,height=3cm]{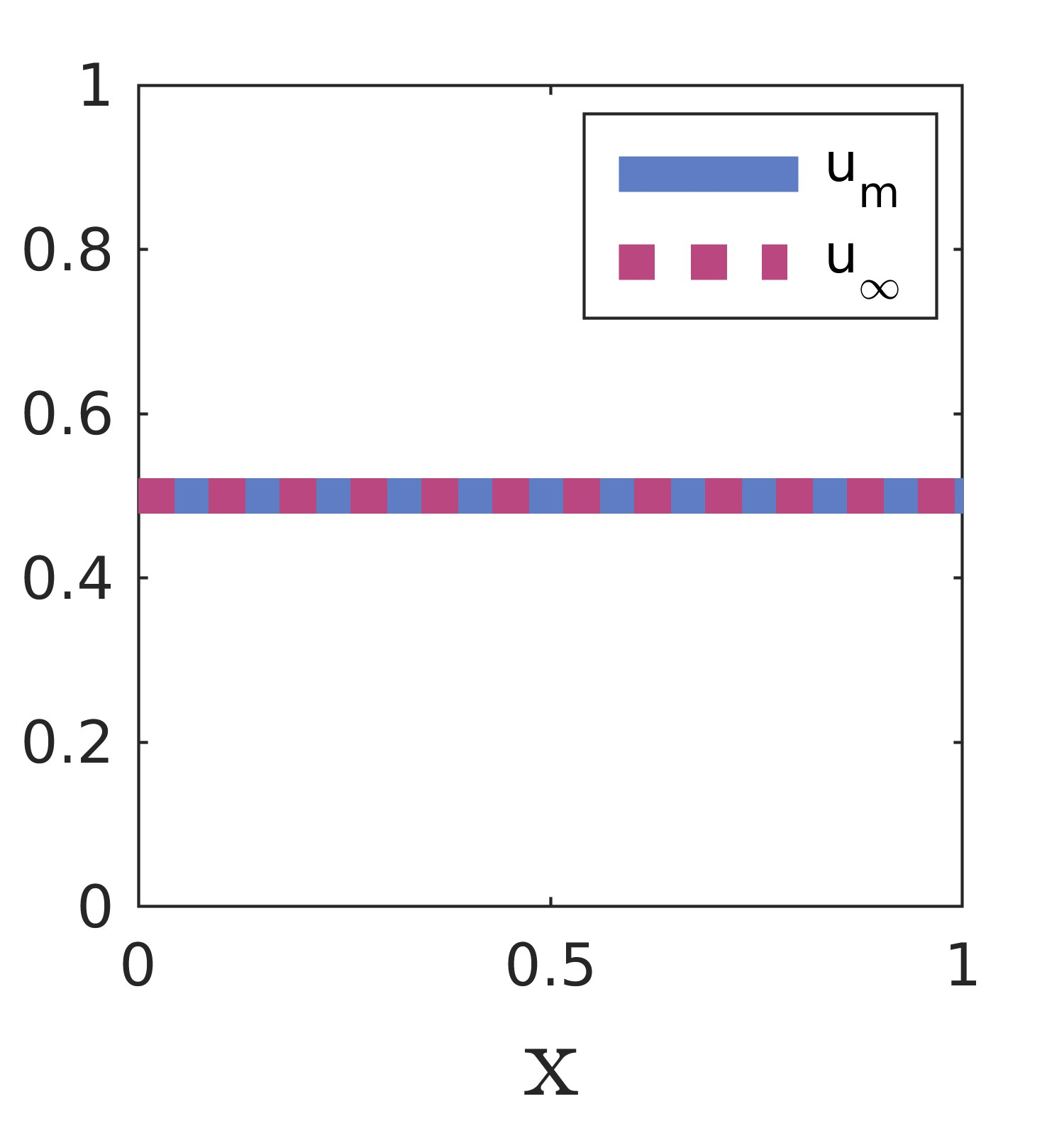}
}\hspace{0.5cm}\subfigure[$m=2$, $t=5$]{
\includegraphics[width=2.7cm,height=3cm]{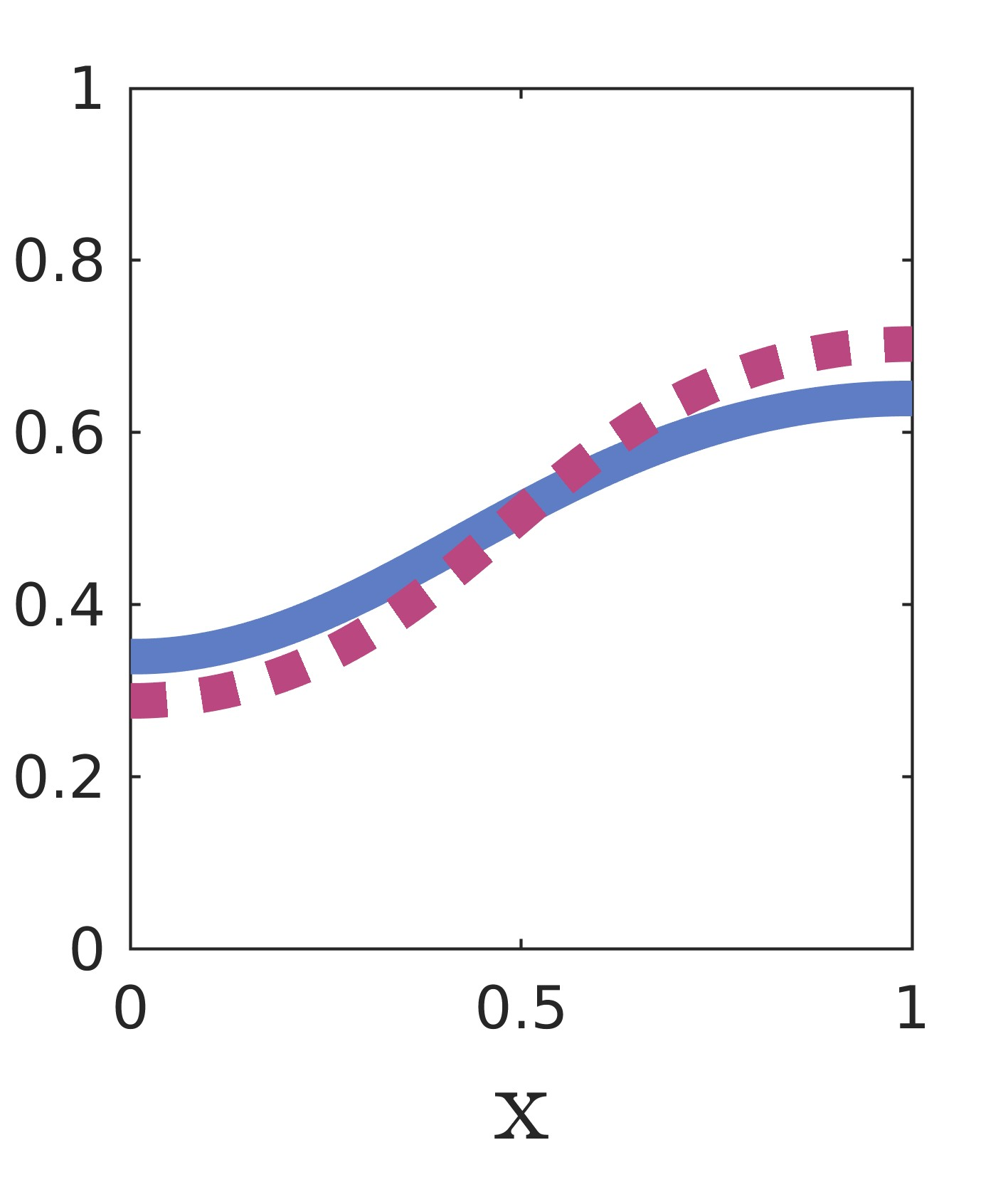}
}\hspace{0.5cm}\subfigure[$m=2$, $t=20$]{
\includegraphics[width=2.7cm,height=3cm]{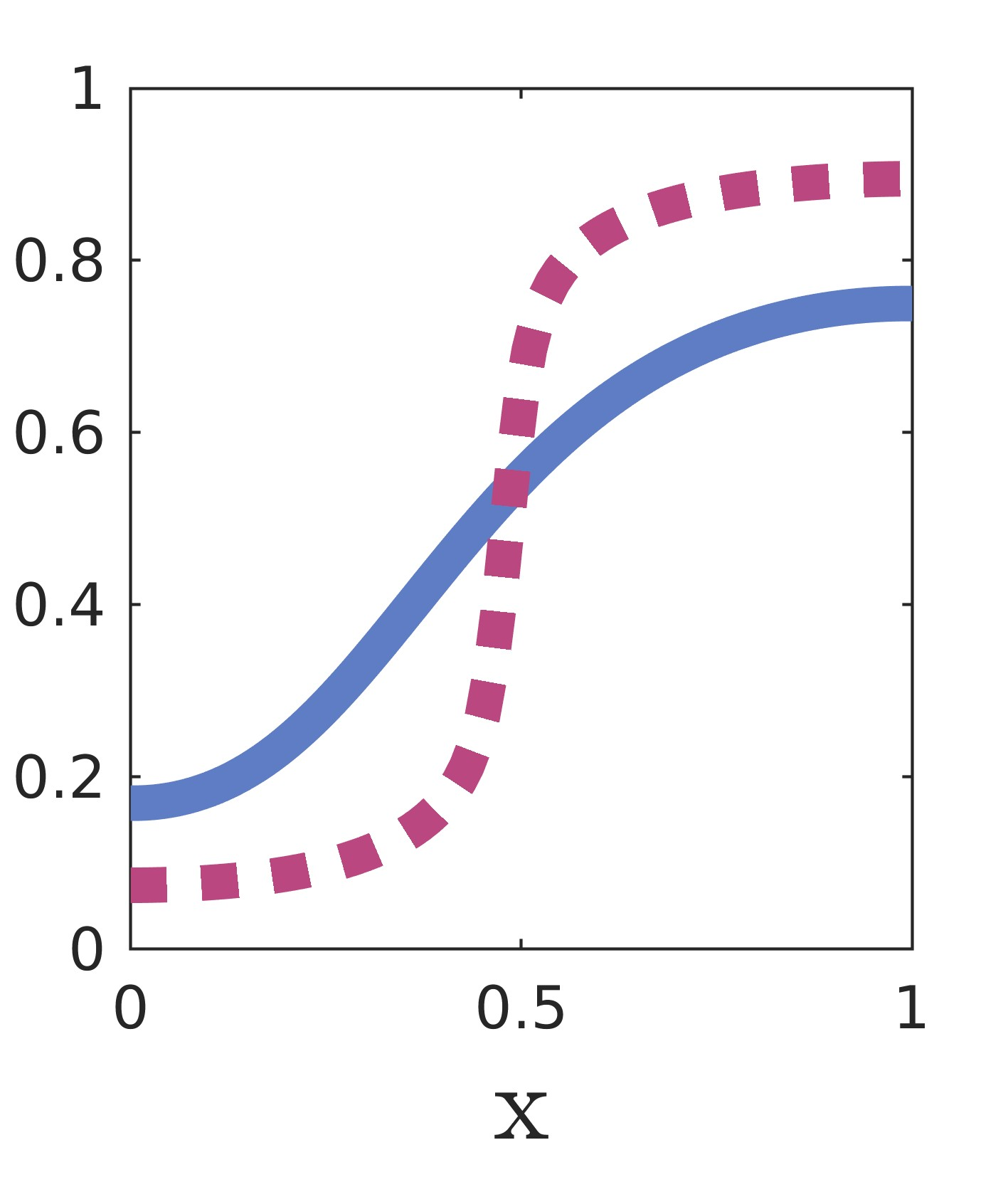}
}\hspace{0.5cm}\subfigure[$m=2$, $t=1000$]{
\includegraphics[width=2.7cm,height=3cm]{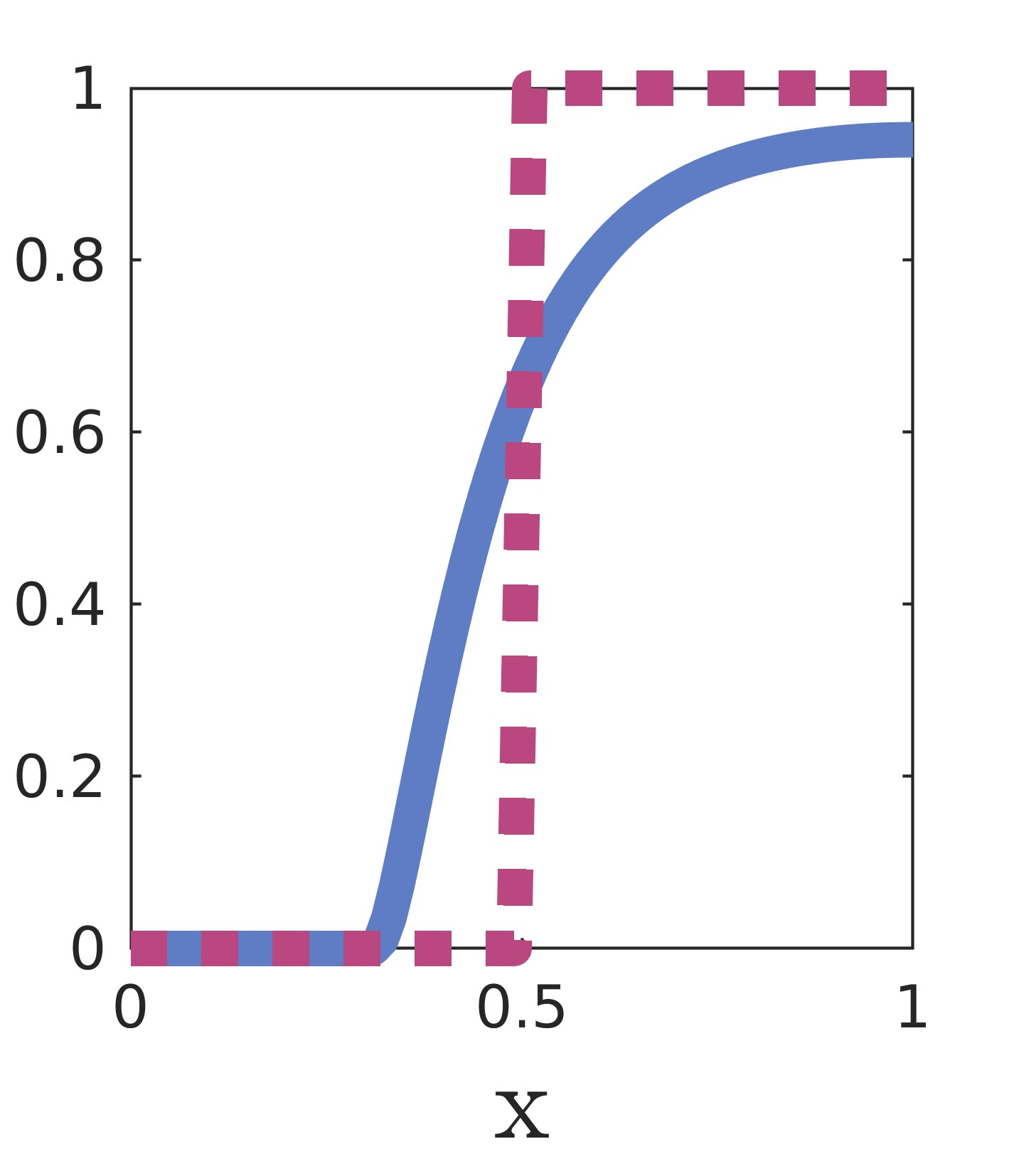}
}
\\
\subfigure[$m=5$, $t=0$]{
\includegraphics[width=2.7cm,height=3cm]{Figures/uinfty1.png}
}\hspace{0.5cm}\subfigure[$m=5$, $t=5$]{
\includegraphics[width=2.7cm,height=3cm]{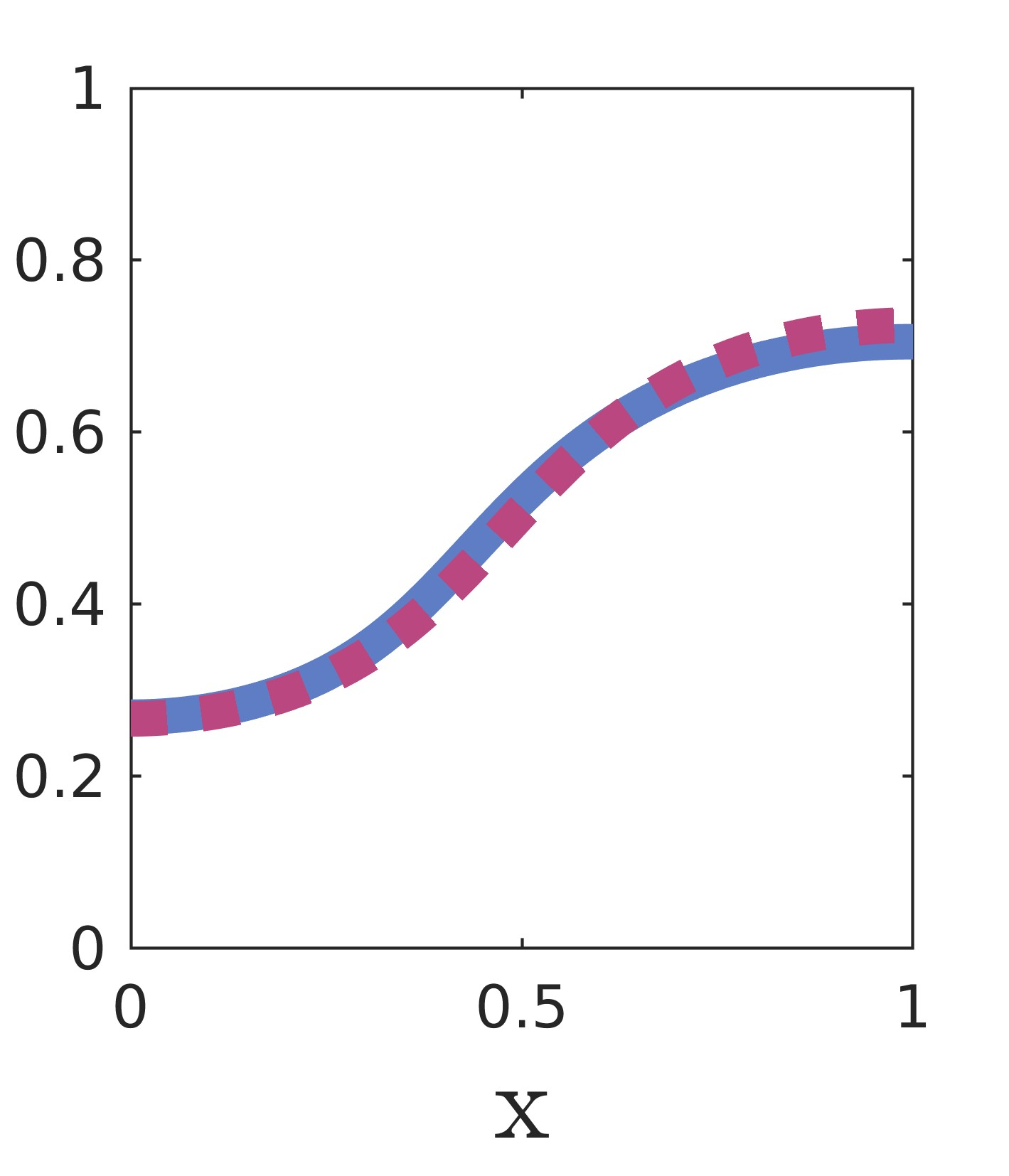}
}\hspace{0.5cm}\subfigure[$m=5$, $t=20$]{
\includegraphics[width=2.7cm,height=3cm]{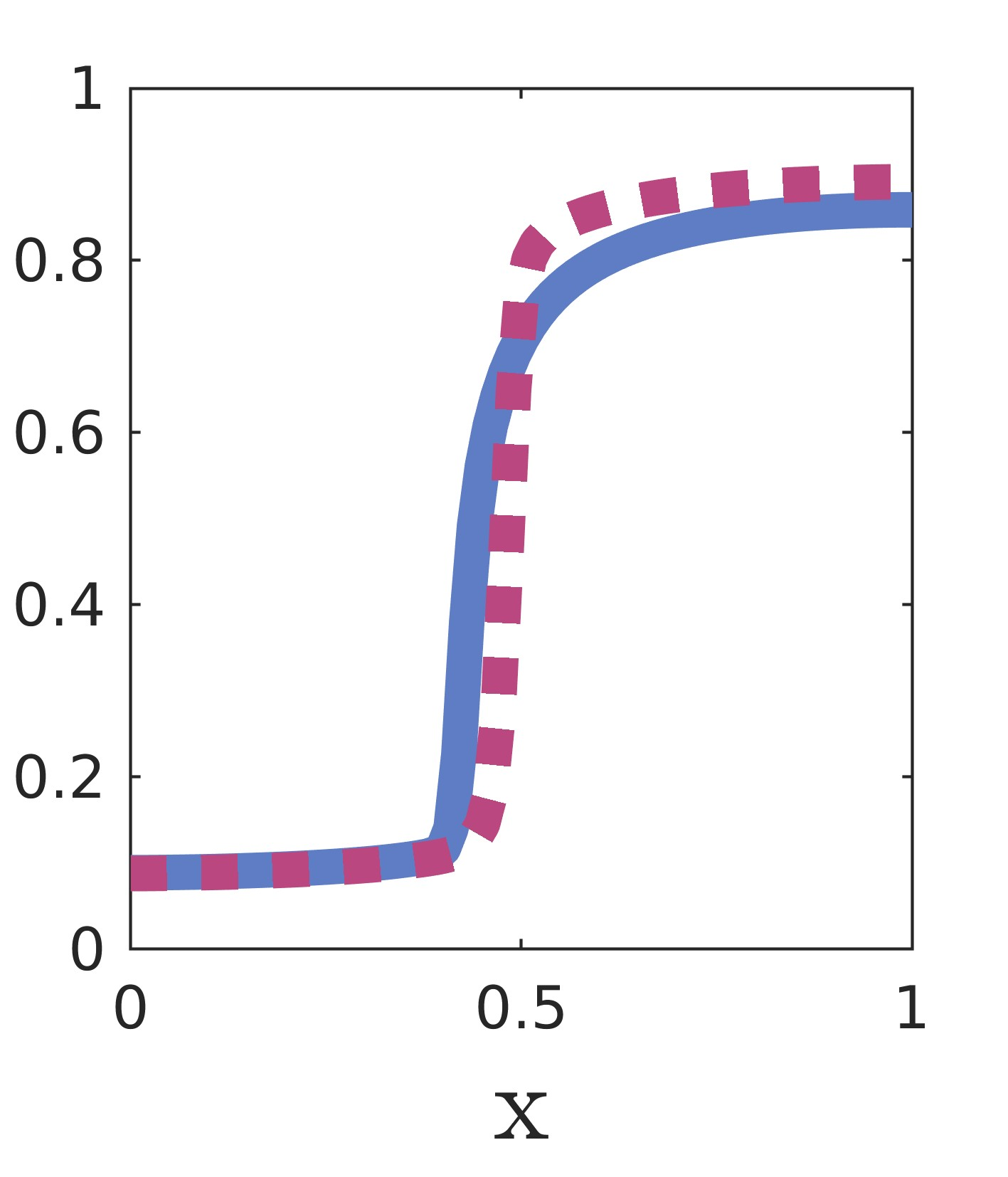}
}\hspace{0.5cm}\subfigure[$m=5$, $t=1000$]{
\includegraphics[width=2.7cm,height=3cm]{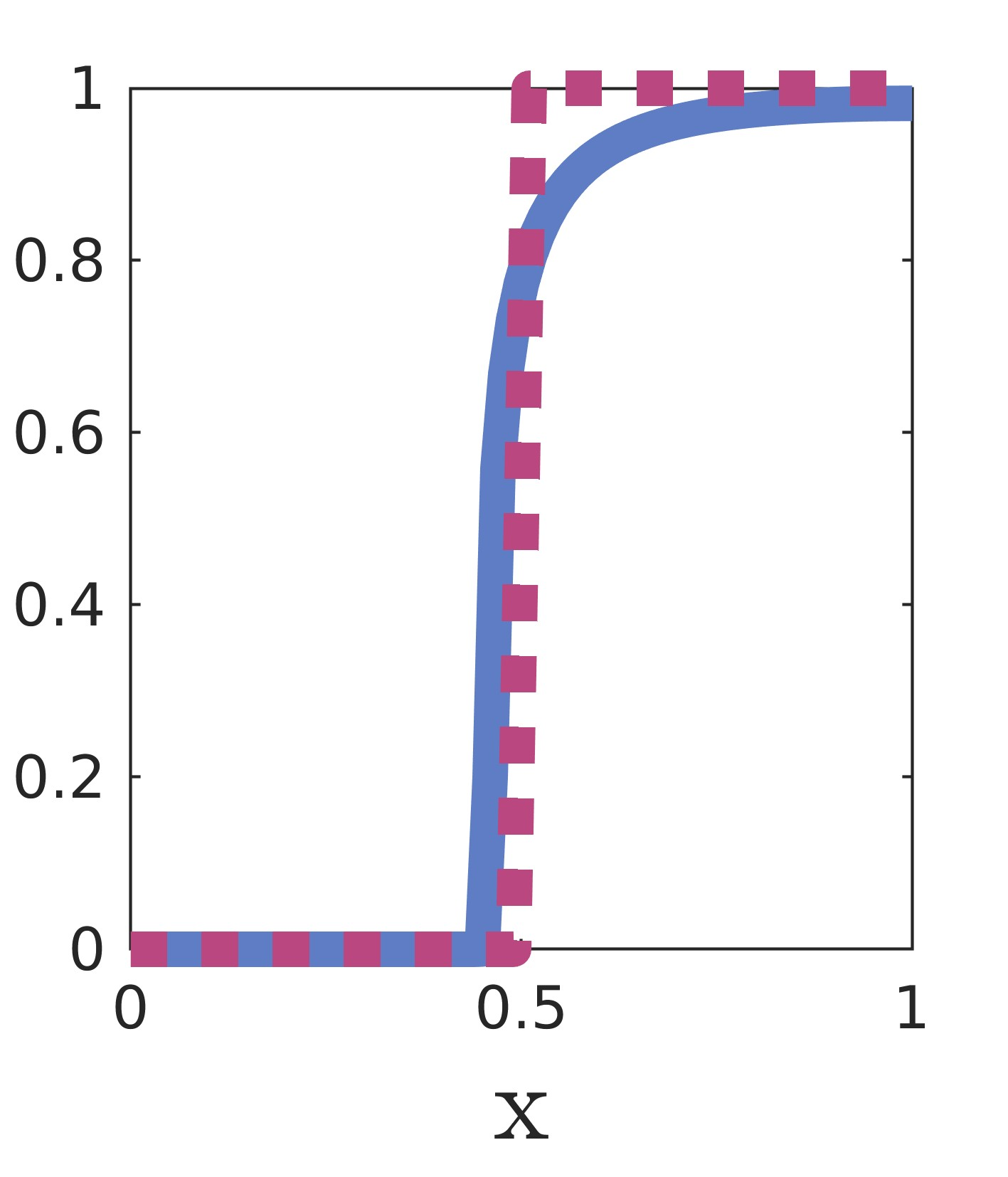}
}\\
\subfigure[$m=100$, $t=0$]{
\includegraphics[width=2.7cm,height=3cm]{Figures/uinfty1.png}
}\hspace{0.5cm}\subfigure[$m=100$, $t=5$]{
\includegraphics[width=2.7cm,height=3cm]{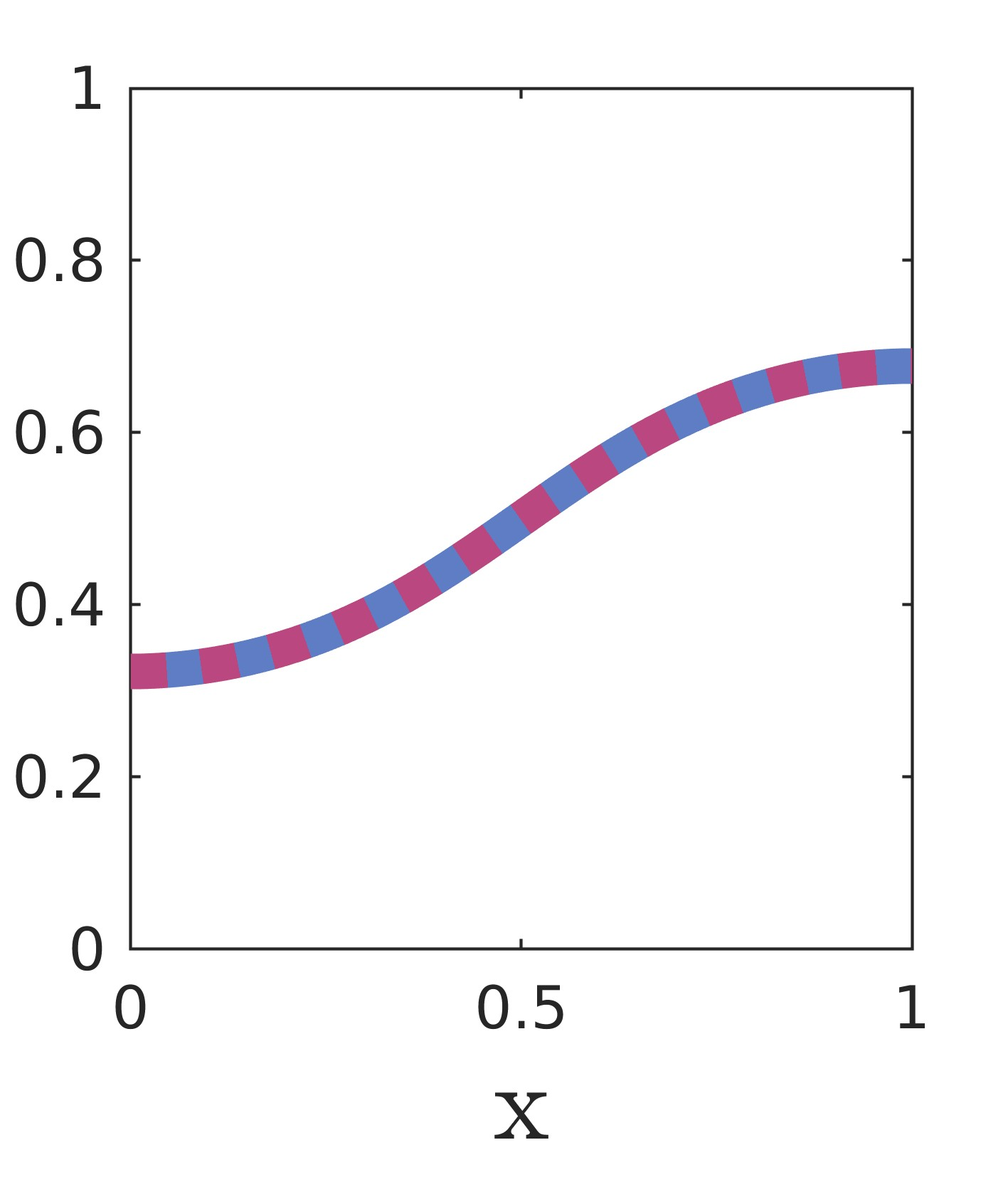}
}\hspace{0.5cm}\subfigure[$m=100$, $t=20$]{
\includegraphics[width=2.7cm,height=3cm]{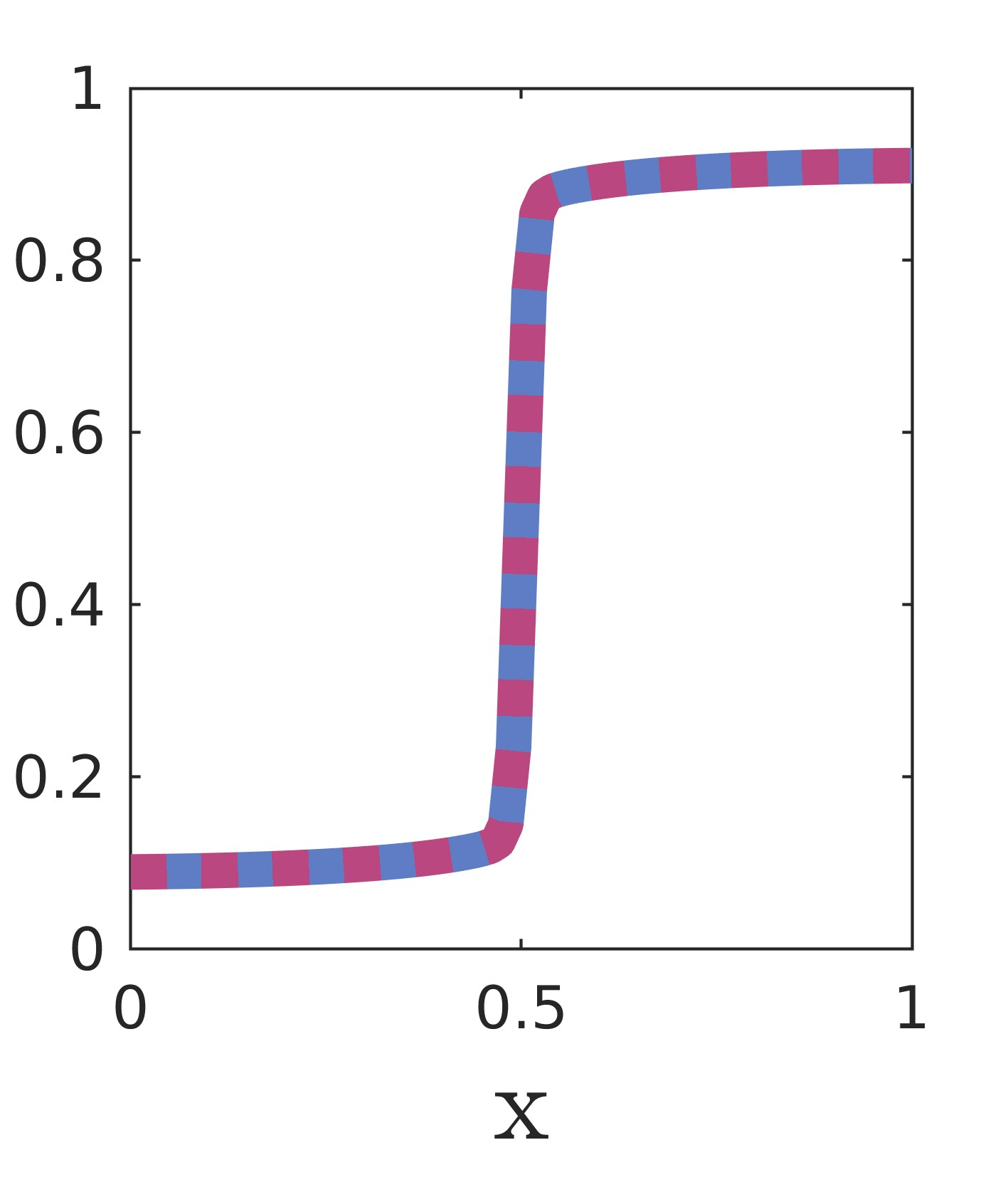}
}\hspace{0.5cm}\subfigure[$m=100$, $t=1000$]{
\includegraphics[width=2.7cm,height=3cm]{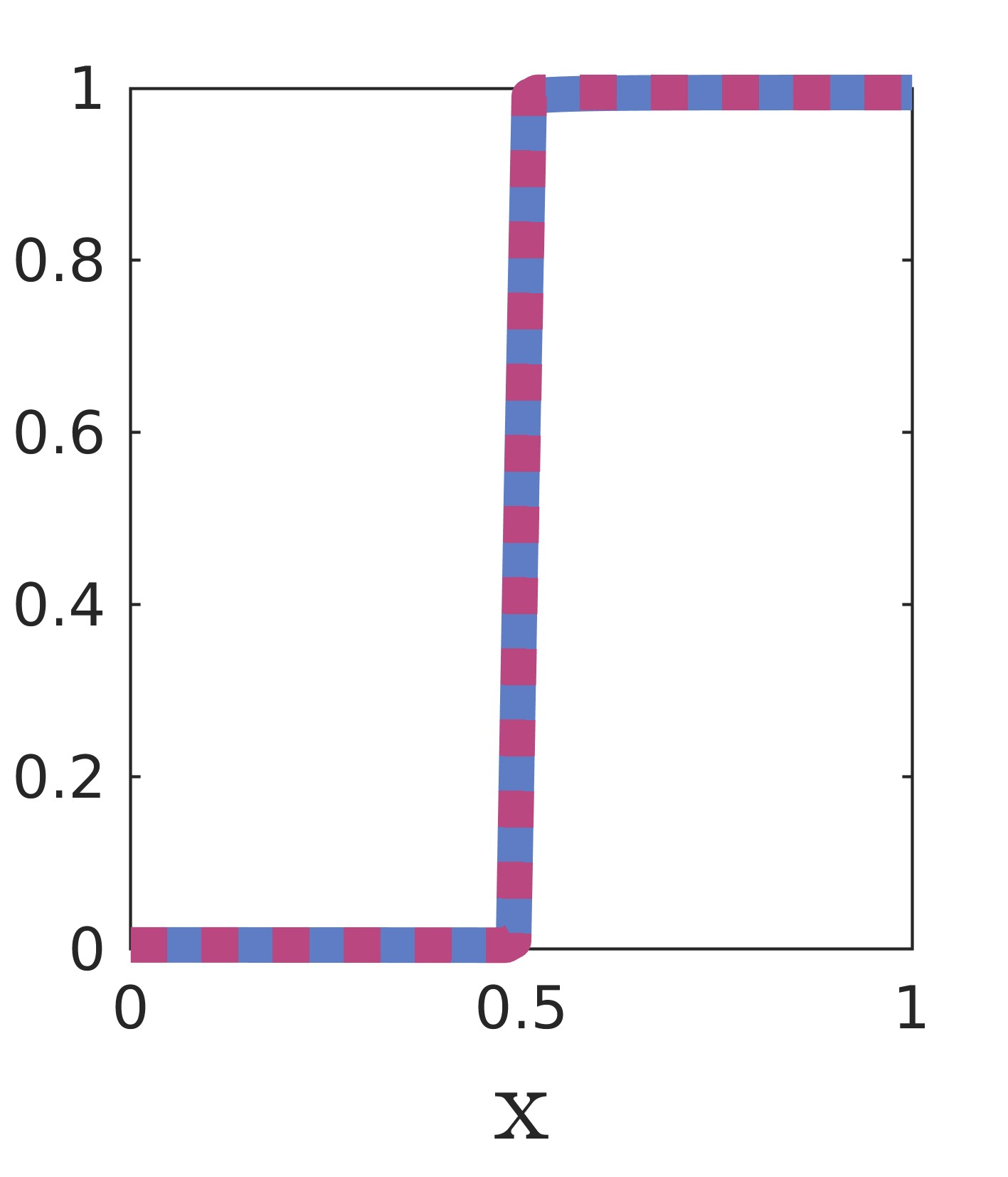}
}
\\[-8pt]
\caption{The time evolution of $u_m(t,x)$ in \eqref{deq} and $u_\infty(t,x)$ in \eqref{hss1} in one dimension with $\chi=40$, $D=1$ and $K=1$. The initial data is given by $u^0=M-0.01 \cos(\pi x)$ and $v^0=M$.}
\label{fig:label3131}
\end{figure}

\begin{figure}[ht]
\centering
\subfigure[$K=0.6$]{
\includegraphics[width=2.7cm,height=3cm]{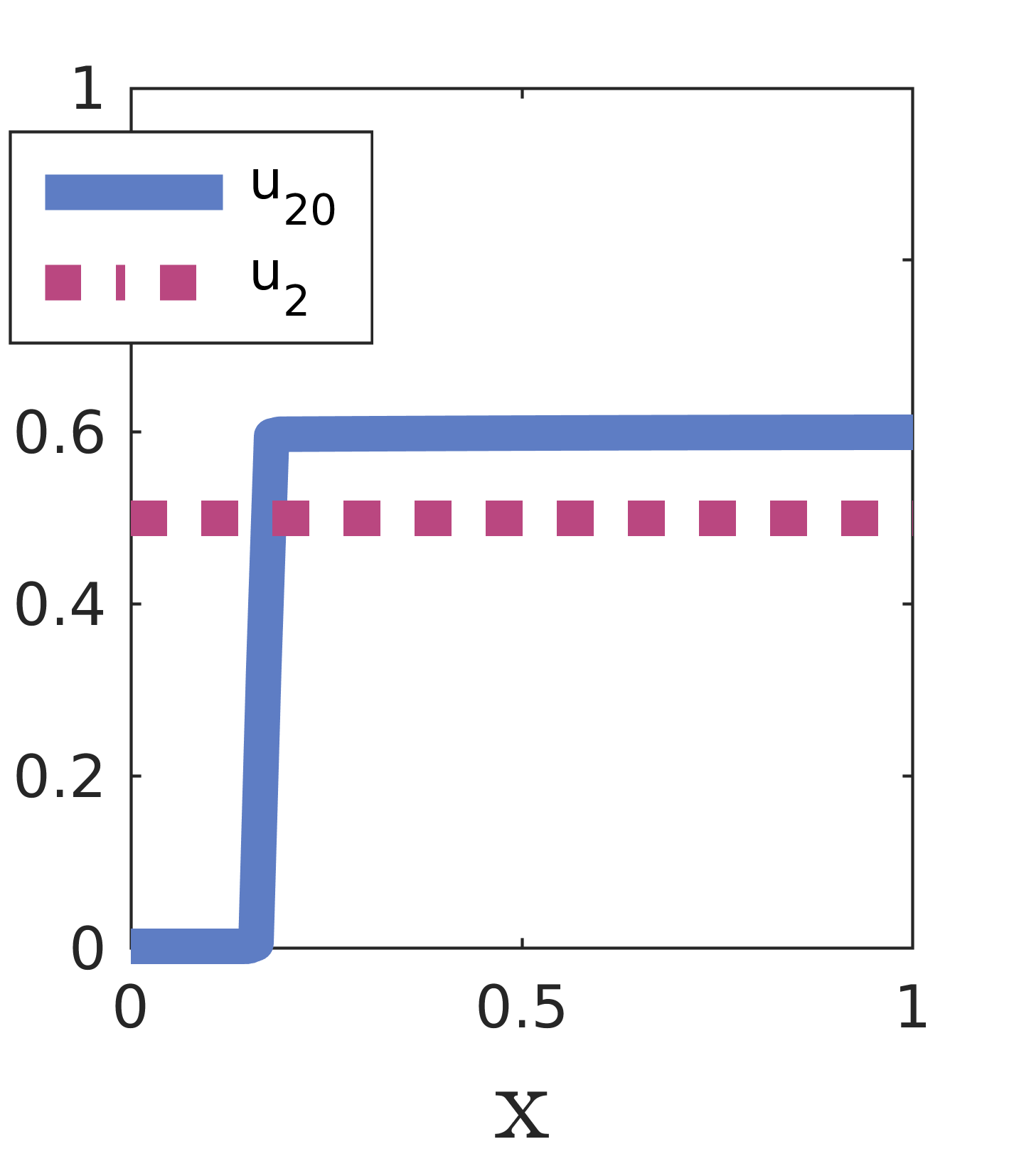}
}\hspace{0.5cm}\subfigure[$K=0.6$]{
\includegraphics[width=2.7cm,height=3cm]{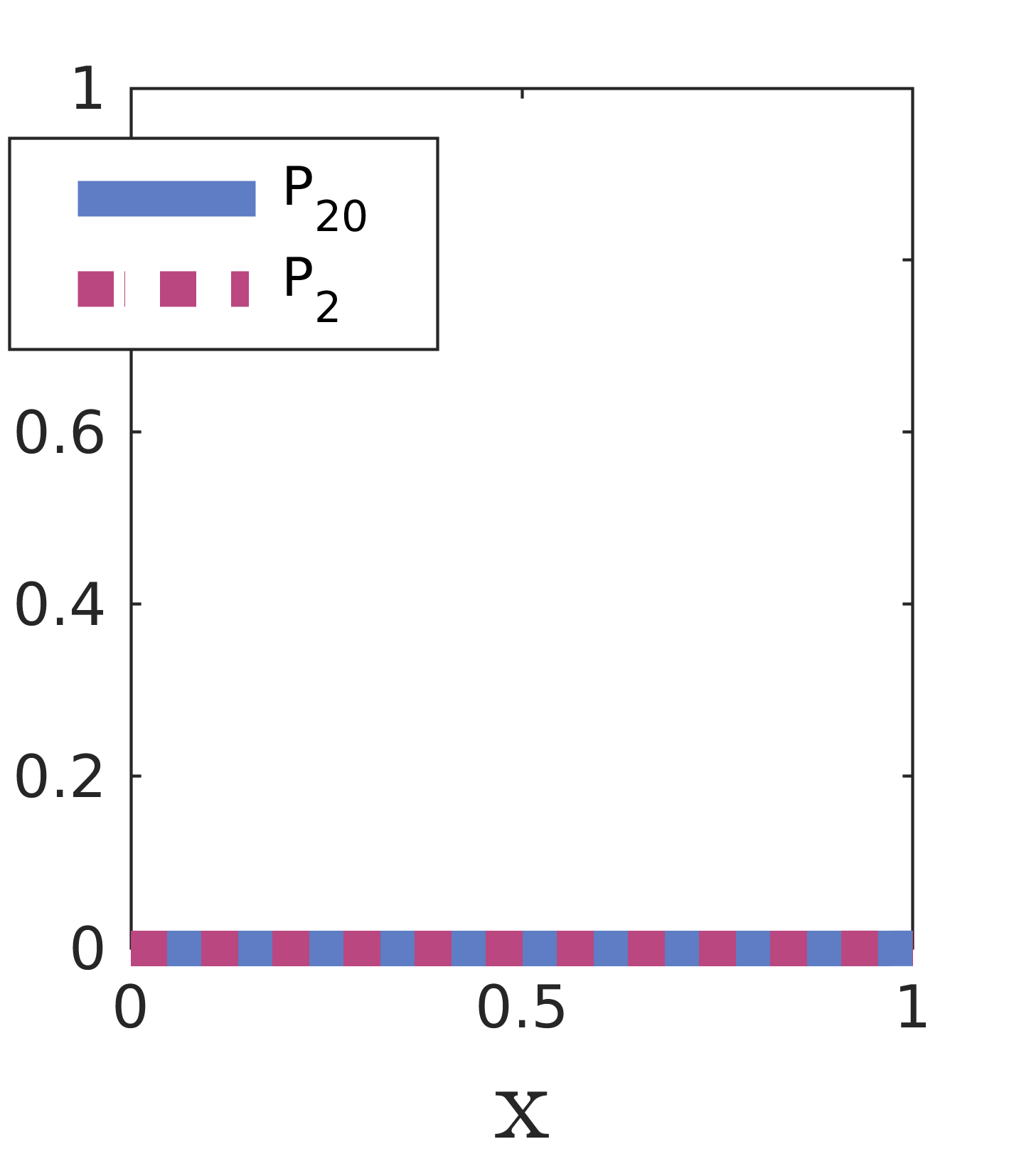}
}\hspace{0.5cm}\subfigure[$K=0.6$]{
\includegraphics[width=2.7cm,height=3cm]{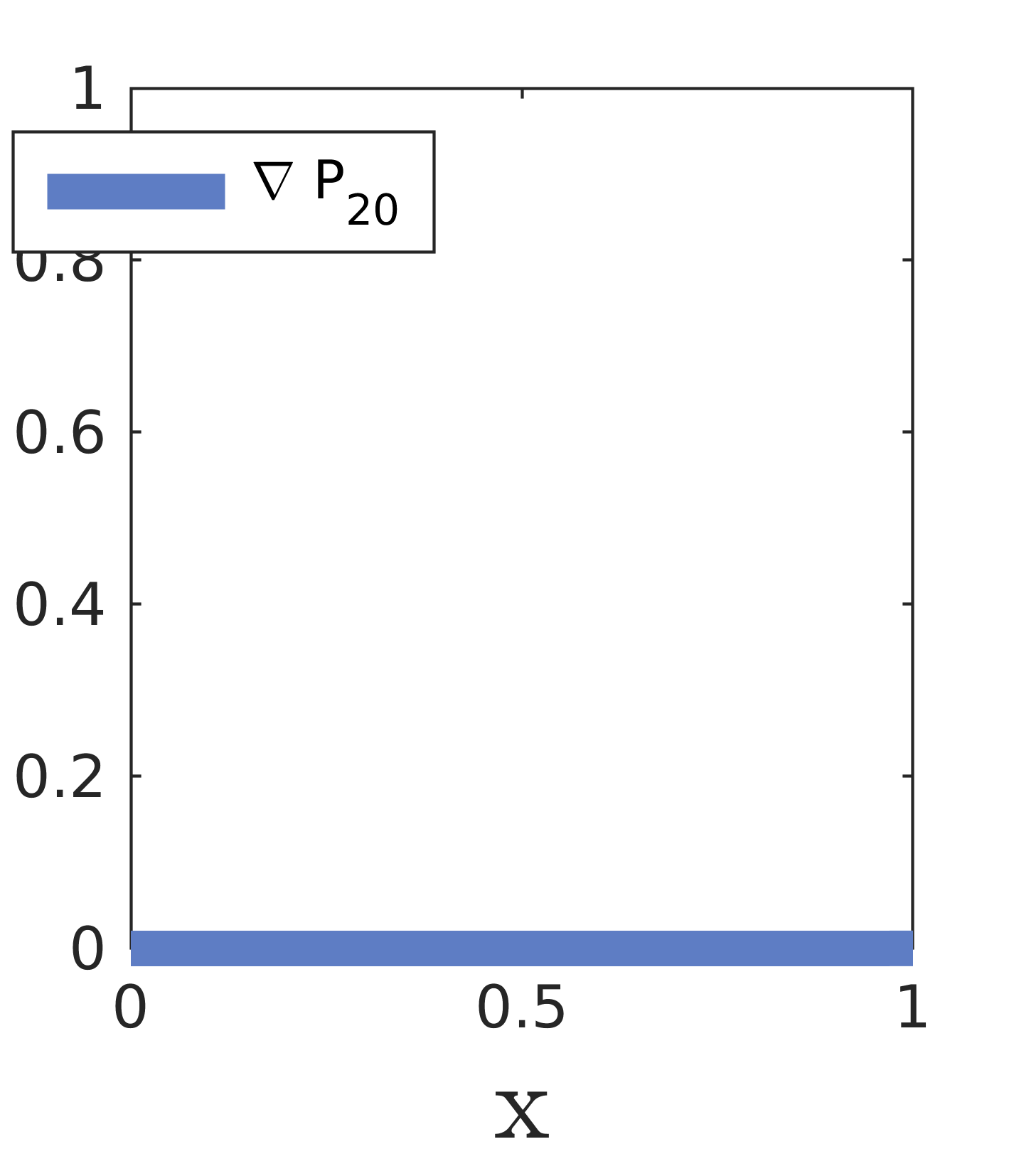}
}\hspace{0.5cm}\subfigure[$K=0.6$]{
\includegraphics[width=2.7cm,height=3cm]{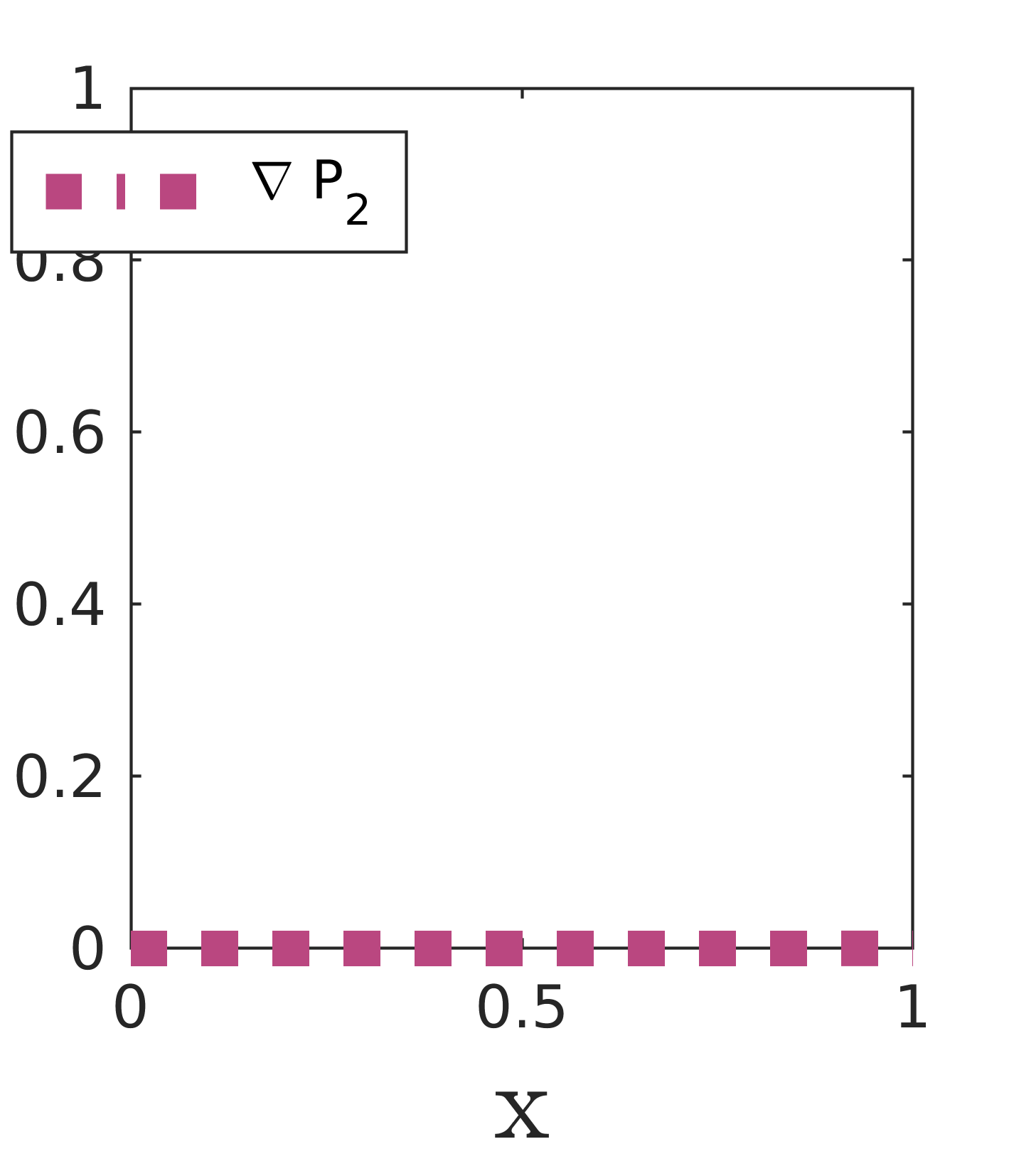}
}\\
\subfigure[$K=1$]{
\includegraphics[width=2.7cm,height=3cm]{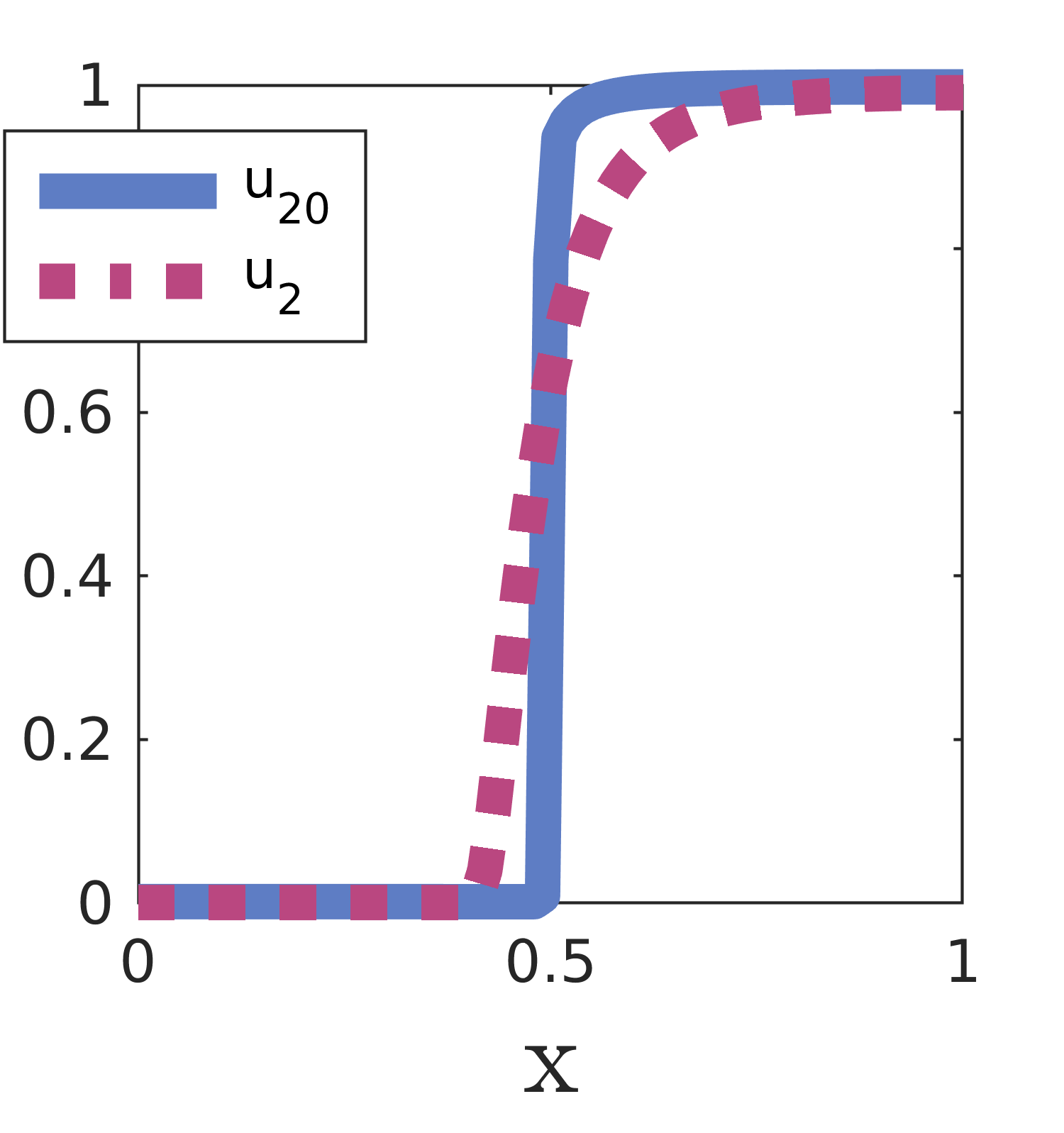}
}\hspace{0.5cm}\subfigure[$K=1$]{
\includegraphics[width=2.7cm,height=3cm]{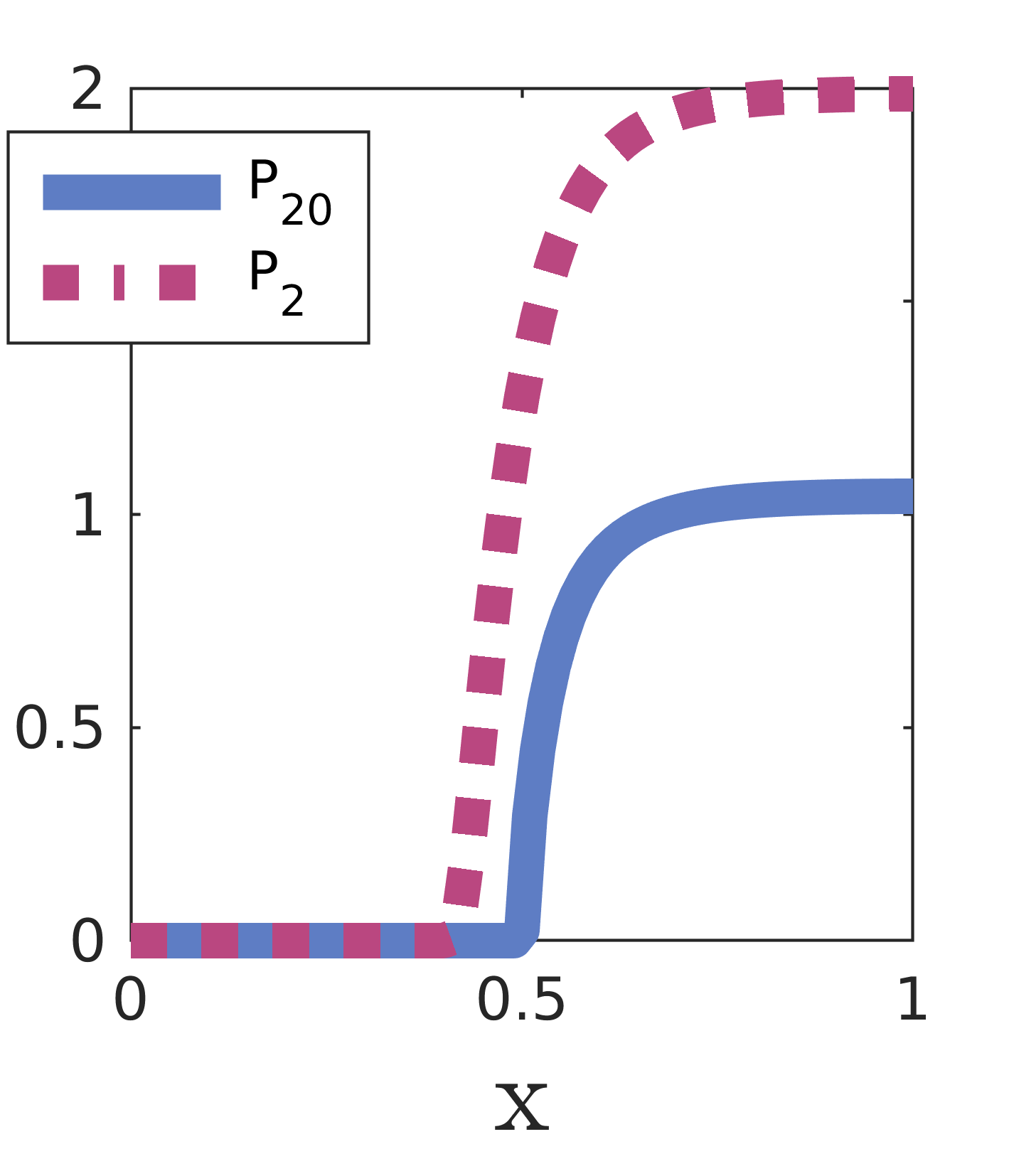}
}\hspace{0.5cm}\subfigure[$K=1$]{
\includegraphics[width=2.7cm,height=3cm]{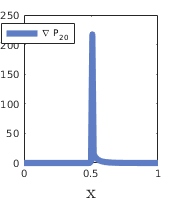}
}\hspace{0.5cm}\subfigure[$K=1$]{
\includegraphics[width=2.7cm,height=3cm]{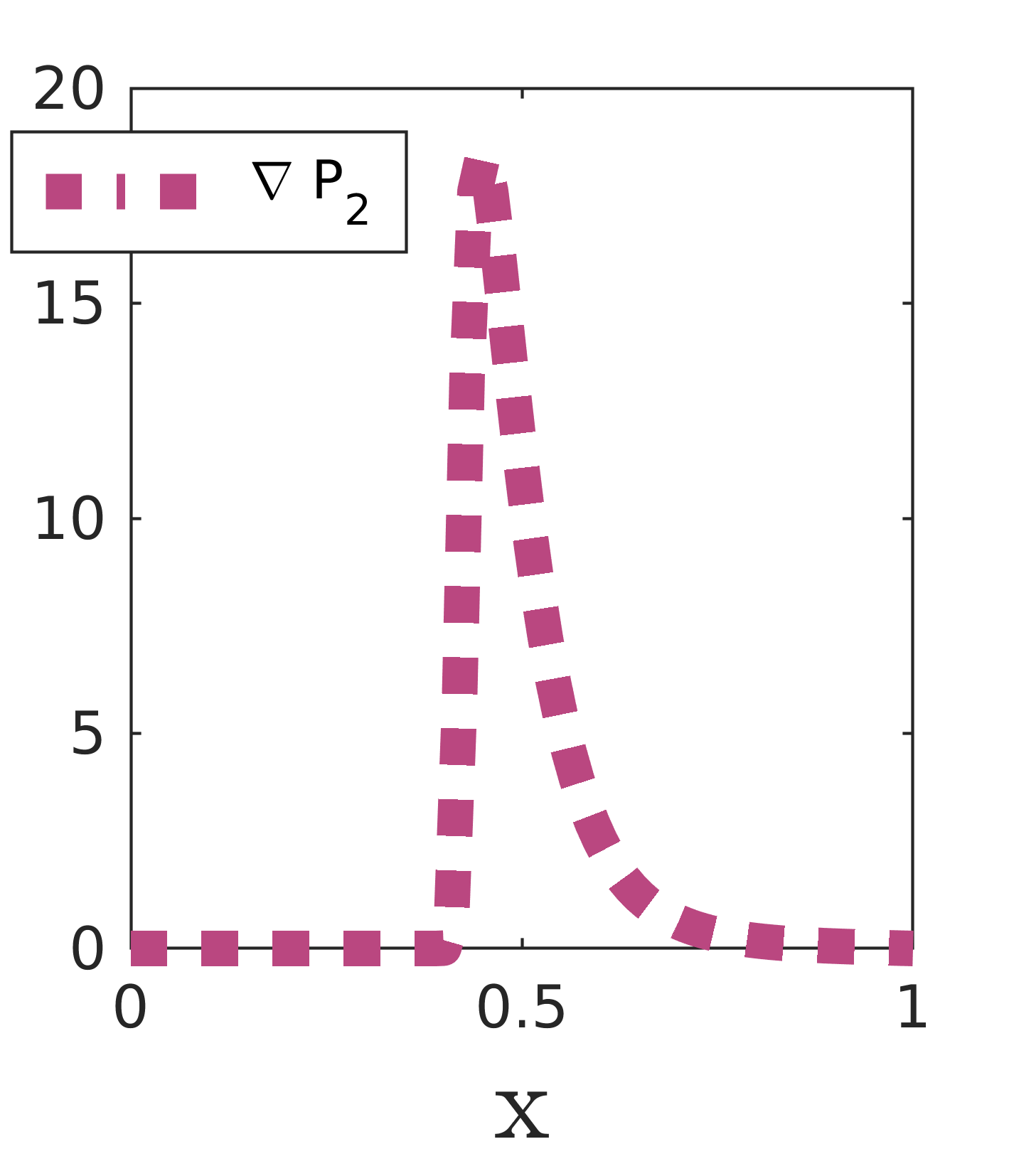}
}\\
\subfigure[$K=2$]{
\includegraphics[width=2.7cm,height=3cm]{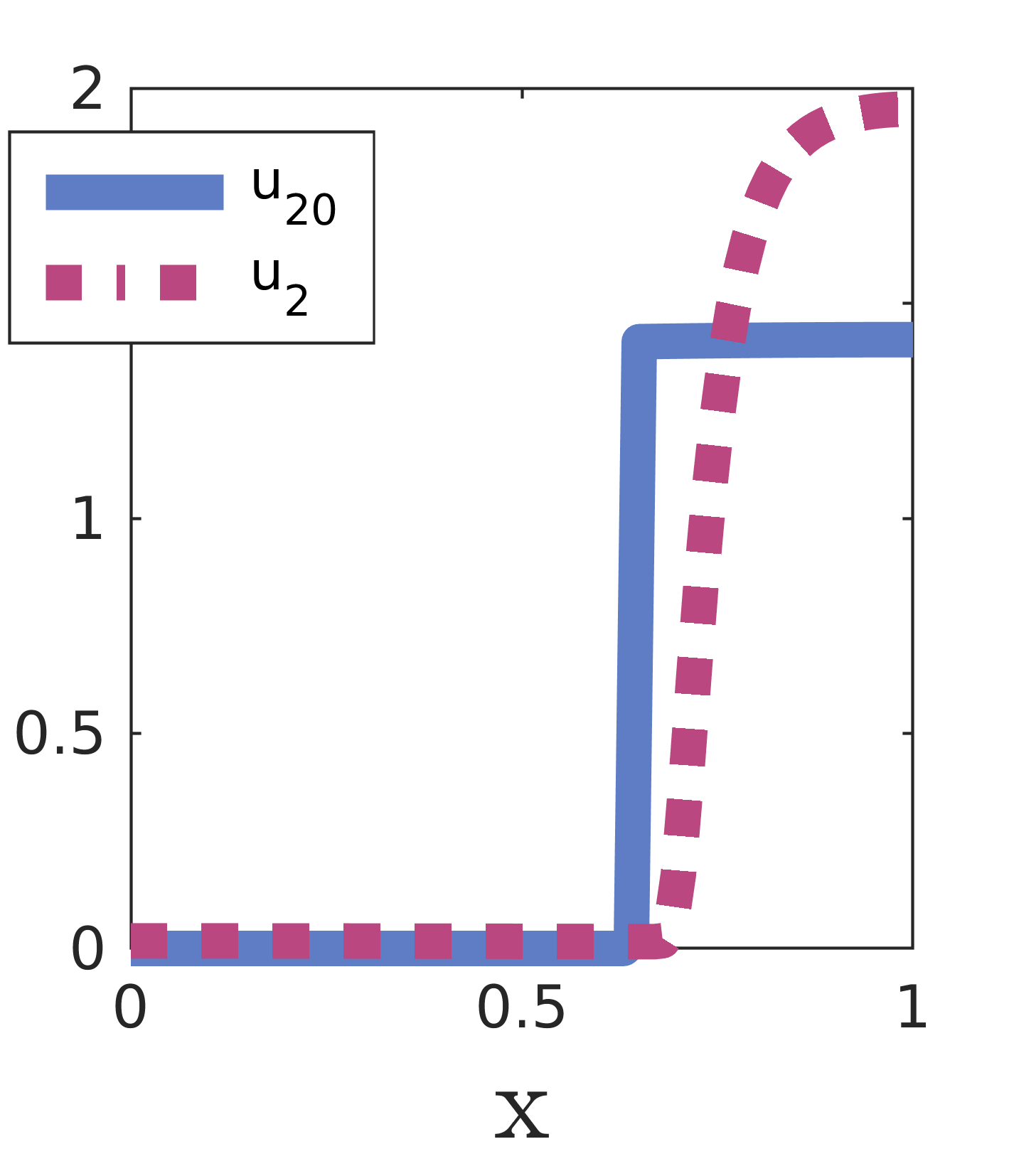}
}\hspace{0.5cm}\subfigure[$K=2$]{
\includegraphics[width=2.7cm,height=3cm]{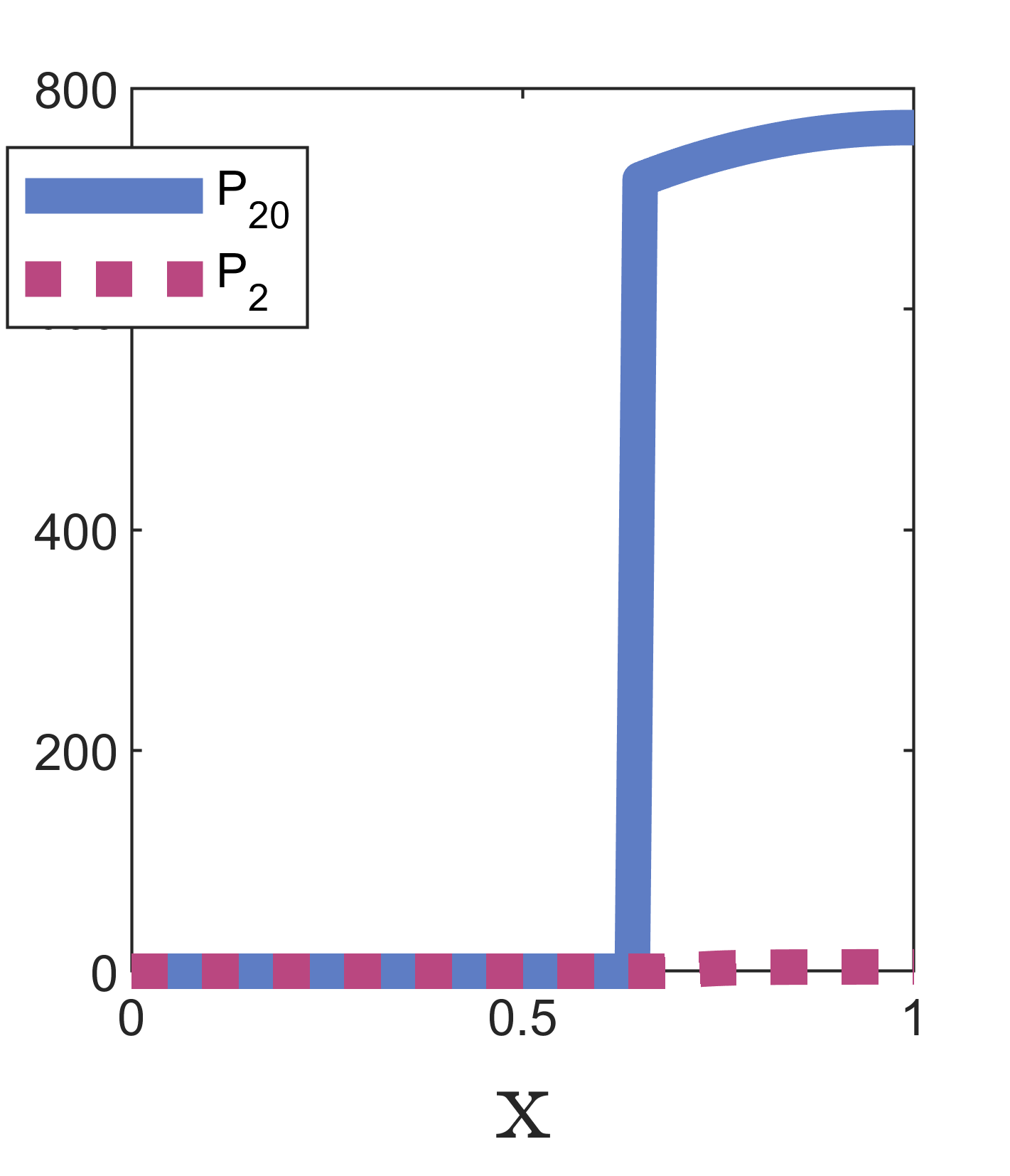}
}\hspace{0.5cm}\subfigure[$K=2$]{
\includegraphics[width=2.7cm,height=3cm]{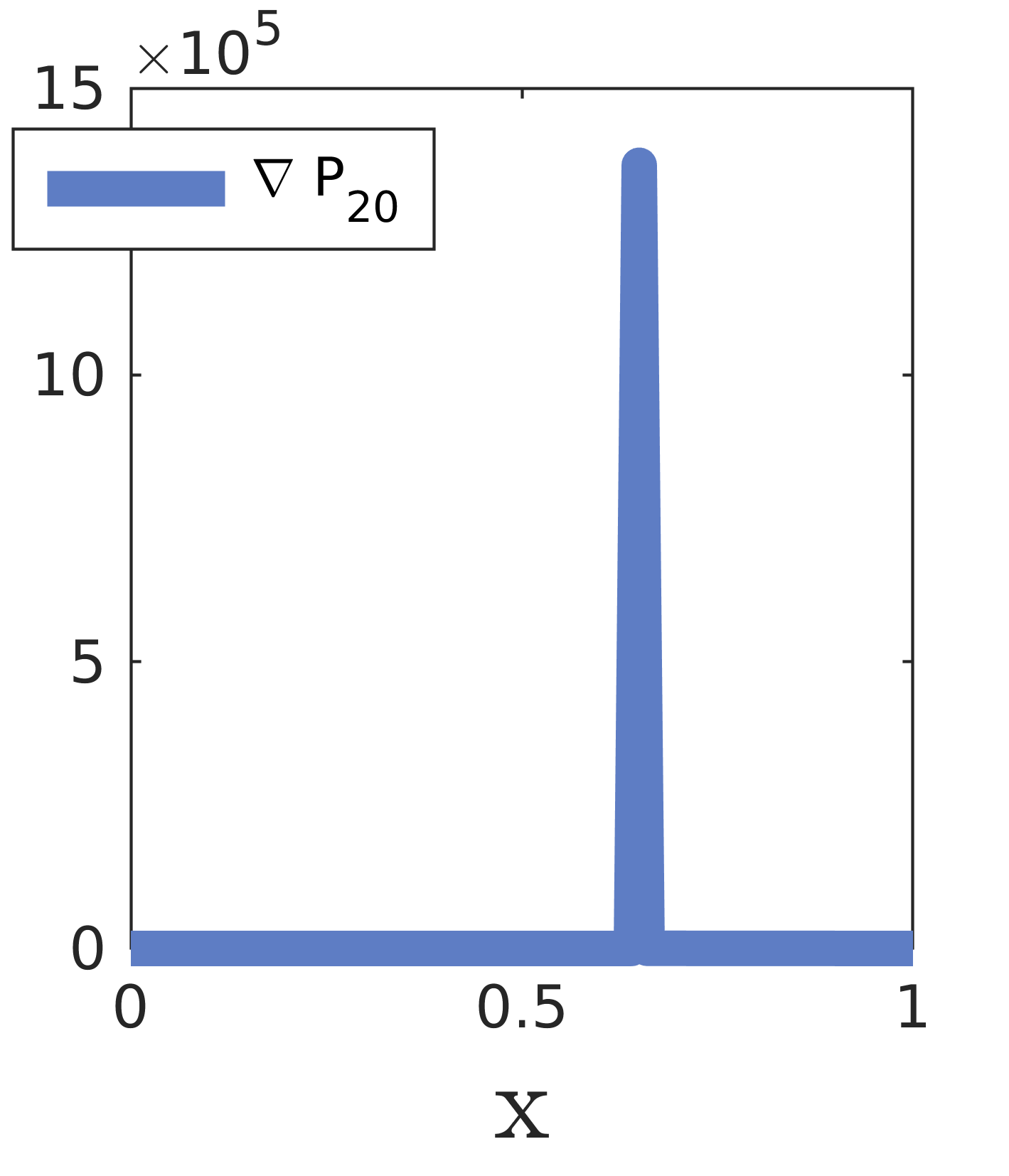}
}\hspace{0.5cm}\subfigure[$K=2$]{
\includegraphics[width=2.7cm,height=3cm]{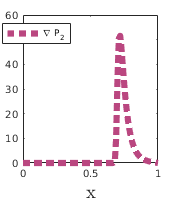}
}
\\[-8pt]
\caption{The profiles of $u_m(t,x)$  of \eqref{deq}, $P_m(t,x)$ and $\nabla P_m(t,x)$ of \eqref{hss1} in one dimension, where $u_m$ with $m=20$ and $m=2$ are denoted by $u_{20}$ and $u_2$, respectively. Other parameters are set by
$\chi=80$ and $D=1$.}
\label{fig:label31311}
\end{figure}

In Figure~\ref{fig:label3131}, we observe that for $K=1$ and $M=0.5$, as proved in Theorem \ref{hks}, the increasing solution $u_m$ of system \eqref{deq} differs from the solution $u_\infty$ of the hyperbolic system \eqref{hss2} when $m$ is small. However, as $m$ increases, $u_m$ approaches $u_\infty$. When $m$ becomes sufficiently large, the two solutions become similar at any time, illustrating that system \eqref{deq} converges to system \eqref{hss2} as $m\to \infty$.

In Figure~\ref{fig:label31311}, we present the profiles of the pressure $P_m$, $\nabla P_m$, and the corresponding density $u_m$. For $K=0.6$, we observe that both $P_m$ and $\nabla P_m$ vanish when $m$ is large, which illustrates \eqref{pressurev} in Theorem~\ref{hks}. For $K=1$, compared to $m=2$, the interval where $\nabla P_m>0$ decreases as $m$ increases, indicating the validity of \eqref{pressurev}. For $K=2$, $u_{20}$ has an upper bound smaller than that of $u_2$. We can observe that $\nabla P_{20}=0$ almost everywhere illustrates \eqref{pressurev}.

\section{Conclusion and perspectives}

In the present paper, we studied the incompressible limit of the system \eqref{deq}. The novel finding of our work is that the different choices of $K$ lead to different limiting systems, in other words, to different phenomenas. While the supercritical case $K>1$ forms a Hele-Shaw free boundary equation. In the subcritical case $K\leq 1$, the stiff pressure effect, the main feature of the Hele-Shaw problem, vanishes and the incompressible limit solves a hyperbolic Keller-Segel system \cite{bp2009}. To this end, the strong limits of both pressure and density are required. Therefore, we proved the strong limit of the pressure for all $K>0$ by making  use of the porous medium type structure, and verified the strong convergence of the density via the kinetic formulation method for $K\leq1$.  An open problem we left is to verify the strong limit of the density for the case $K>1$.

 In addition, there are some open problems for the Keller-Segel system~\eqref{deq} with the volume filling effect in $\mathbb{R}^n$. First of all, since it can be observed from the numerical simulation that a sharp interface exists on the density and pressure when $m$ is large, the regularity of $u_m$ and $P_m$ is worth to be further understood; cf.~\cite{MPQ2017}. It is also challenging to prove the existence of the non-monotonic solution, which is formed by the increasing solutions via reflection, where it is positive in some regions surrounded by two regions
of vacuum. Furthermore, we would like to know if the non-monotonic solution has some wavelike properties; see~\cite{DQZ2020}. Moreover, one can study its existence and the radially symmetric property of the stationary state inspired by \cite{Carrillo2019nonlinear,CHMV2018}. The other challenging problem is to study its large time asymptotic behaviour in which it is difficult to obtain the time uniform moment estimates of the density; see \cite{Carrillo2019nonlinear}.

\bigbreak
\noindent\textbf{Acknowledgments}  Qingyou He is supported by the 
ANR project ChaMaNe (ANR-19-CE40-0024). The authors would like to thank Prof.~Beno{\^i}t Perthame for the fruitful discussions throughout the preparation of this paper.
\appendix
\section{Decomposition of limiting pressure}
From Proposition~\ref{complementarityr}, we know that the stiff pressure effect $\Delta P_\infty$ vanishes when $K\leq1$. For $K>1$,
inspired by \cite{MRS2010,KM2023,GKM2022}, we study $\Delta P_\infty$ by decomposition, and conclude that $\Delta P_\infty$ is a non-positive bounded measure  on the interior support of $P_\infty$. 

\vspace{2mm}

For the Lebesgue measurable function $P_\infty \in L^2(0,T;H^1(\Omega))$ and any $T>0$, 
we can define the measurable  set 
\begin{equation}\label{support}
\mathcal{P}(t)=:\{x\in \Omega : P_\infty(x,t)>0\},\quad \text{ a.e. }t>0.
\end{equation}
For point-wise characterization of the density $u_\infty$, we define the support of the measure $1-u_\infty$ by 
\[\text{Supp}(1-u_\infty(t)):=\{x_0\in \Omega: \int_{B_r(x_0)\cap\Omega}(1-u_\infty)(\cdot,t)dx>0\text{ for all }r>0\},\]
which is closed in $\Omega$ by its definition. The corresponding complement, open in $\Omega$, is given by 
\begin{equation}\label{interior}
\mathcal{O}(t):=\{x\in \Omega: \int_{B_r(x)\cap\Omega}(1-u_\infty)(\cdot,t)dx=0\text{ for some }r>0\}.
\end{equation}

\vspace{3mm}

We show that the complementarity relation~\eqref{CR} holds for the more general test functions. 
\begin{proposition}\label{ecr}
For any $T>0$ and $\phi_{t}\in H^1(\Omega)$ with $\phi_{t}(x)(1-u_\infty(x,t))=0$,\text{ a.e }$x\in\Omega$, $t\in[0,T]$, $P_\infty(\cdot,t)$ solves  
\begin{equation}\label{k1}\int_{\Omega}-\nabla P_\infty(x,t)\cdot\nabla \phi_t(x)+(K-1)(1-c_\infty(x,t))\phi_t(x)dx=0,\end{equation}
up to a set of measure zero in $t>0$. Furthermore, we have 
\begin{equation}\label{k2}
   \Delta P_\infty+(K-1)(1-c_\infty)=0, \quad\text{in }\mathcal{D}'(\mathcal{O}(t)).
\end{equation}
\end{proposition}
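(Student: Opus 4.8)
The plan is to start from the distributional complementarity relation $\eqref{CR}_1$, namely $P_\infty[\Delta P_\infty+(K-1)(1-c_\infty)]=0$ in $\mathcal{D}'(Q_T)$, and upgrade the class of admissible test functions. Writing $\eqref{CR}_1$ against a test function $\varphi(x,t)=\psi(t)\phi(x)$ and using the spatial regularity $P_\infty(\cdot,t)\in H^1(\Omega)$ together with $\nabla P_\infty\cdot\vec n=0$ on $\Gamma_T$, one has, for a.e.\ $t$,
\[
\int_\Omega \big[-\nabla P_\infty(x,t)\cdot\nabla\big(P_\infty(x,t)\phi(x)\big)+(K-1)(1-c_\infty(x,t))P_\infty(x,t)\phi(x)\big]\,dx=0
\]
for all $\phi\in H^1(\Omega)$. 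The key algebraic point is that $-\nabla P_\infty\cdot\nabla(P_\infty\phi)=-|\nabla P_\infty|^2\phi-\tfrac12\nabla(P_\infty^2)\cdot\nabla\phi$, so the relation above is really a statement about the measure-valued object $\mu_t:=\Delta P_\infty(\cdot,t)+(K-1)(1-c_\infty(\cdot,t))$ paired against $P_\infty(\cdot,t)\phi$.

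First I would make rigorous the replacement $P_\infty\phi_t\rightsquigarrow\phi_t$ when $\phi_t$ already vanishes on $\{u_\infty<1\}=\{1-u_\infty>0\}$. The mechanism is the identity $P_\infty(1-u_\infty)=0$ and $\nabla P_\infty(1-u_\infty)=0$ from Lemma~\ref{cr} (equivalently $\eqref{hss1}_3$): on the set $\{P_\infty>0\}$ we have $u_\infty=1$, hence the constraint on $\phi_t$ (that $\phi_t(1-u_\infty)=0$) does \emph{not} force $\phi_t$ to vanish there, so $\phi_t$ is a genuinely richer class than $P_\infty\phi$. The approximation step is to take, for $R>0$, the truncation-type multiplier $\phi_t^{(R)}:=\phi_t\cdot\chi_R(P_\infty)$ where $\chi_R$ is a smooth cutoff that is $\equiv 1$ for $P_\infty\ge 1/R$ and $\equiv0$ near $P_\infty=0$; test $\eqref{CR}_1$ against $\psi(t)\phi_t^{(R)}$, use $\nabla\chi_R(P_\infty)=\chi_R'(P_\infty)\nabla P_\infty$, and pass $R\to\infty$. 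Here the constraint $\phi_t(1-u_\infty)=0$ is exactly what kills the error terms supported where $P_\infty$ is small, since there $u_\infty<1$ on a set of positive measure only off $\mathrm{supp}\,\phi_t$. The boundary term $\int_{\partial\Omega}P_\infty\nabla P_\infty\cdot\vec n\,\phi_t\,dS$ vanishes by $\nabla P_\infty\cdot\vec n=0$ on $\Gamma_T$ (equivalently $\eqref{hss1}_4$), and since $\phi_t\in H^1(\Omega)$ with no boundary restriction, no extra boundary contribution appears. This yields \eqref{k1} for a.e.\ $t$. For \eqref{k2}, I would simply observe that on the open set $\mathcal{O}(t)$ of \eqref{interior} we have $u_\infty=1$ a.e., so $1-u_\infty=0$ there, hence \emph{every} $\phi_t\in C_c^\infty(\mathcal{O}(t))$ satisfies the constraint $\phi_t(1-u_\infty)=0$; plugging such $\phi_t$ into \eqref{k1} gives exactly $\Delta P_\infty+(K-1)(1-c_\infty)=0$ in $\mathcal{D}'(\mathcal{O}(t))$.

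The main obstacle I anticipate is the $R\to\infty$ limit in the truncation argument: one must control $\int_{Q_T}\chi_R'(P_\infty)|\nabla P_\infty|^2\phi_t\,dx\,dt$ and show it tends to the correct limit (or to zero), which requires knowing that $\nabla P_\infty$ is ``concentrated'' on $\{P_\infty>0\}$ in a suitable quantitative sense — this is where $\nabla P_\infty(1-u_\infty)=0$ and the weighted strong convergence \eqref{ums}--\eqref{123} of Lemma~\ref{cr} do the heavy lifting, since they let us replace factors of $u_\infty$ by $1$ against $|\nabla P_\infty|$. A secondary technical point is the measurable selection of the exceptional $t$-null set uniformly over the (separable) class of admissible $\phi_t$, handled by testing against a countable dense family in $H^1(\Omega)$ and taking a countable union of null sets. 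Once these are in place, \eqref{k1} and \eqref{k2} follow as above.
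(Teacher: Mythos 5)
Your starting point is too weak: the complementarity relation $\eqref{CR}_1$ is the pairing of $\mu_t:=\Delta P_\infty+(K-1)(1-c_\infty)$ against test functions of the form $P_\infty\varphi$, so it carries information only on $\{P_\infty>0\}$. The admissible class in Proposition~\ref{ecr} is genuinely larger: $\phi_t(1-u_\infty)=0$ allows $\phi_t\neq0$ anywhere on $\{u_\infty=1\}$, and the inclusion $\{P_\infty>0\}\subset\{u_\infty=1\}$ can be strict on a set of positive measure. Your truncation $\phi_t^{(R)}=\phi_t\,\chi_R(P_\infty)$ converges to $\phi_t\textbf{1}_{\{P_\infty>0\}}$, not to $\phi_t$, and the sentence claiming the constraint ``kills the error terms supported where $P_\infty$ is small, since there $u_\infty<1$'' relies on the false implication $P_\infty=0\Rightarrow u_\infty<1$ (the identity $P_\infty(1-u_\infty)=0$ only gives the converse direction). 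Concretely, on a ball inside $\mathcal{O}(t)$ where $P_\infty\equiv0$ but $u_\infty=1$, \eqref{k2} asserts the nontrivial identity $(K-1)(1-c_\infty)=0$, while $\eqref{CR}_1$ reduces there to $0=0$; no amount of test-function upgrading can extract the former from the latter.

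The missing ingredient is the limiting mass equation. The paper tests $\partial_t u_\infty=\Delta P_\infty-\nabla\cdot\big((Ku_\infty-\overline{u_\infty^2})\nabla c_\infty\big)$ against the time-frozen $\phi_t$, integrates over $(t,t+h)$, and uses $u_\infty(t)\phi_t=\phi_t$ together with $u_\infty\le1$ to obtain $\int_\Omega[u_\infty(t)-u_\infty(t+h)]\phi_t\,dx=\int_\Omega[1-u_\infty(t+h)]\phi_t\,dx\ge0$; dividing by $h$ and letting $h\to0^{+}$ and $h\to0^{-}$ (Lebesgue differentiation) gives the two opposite inequalities whose combination is \eqref{k1}, after the relations $(1-u_\infty)\nabla\phi_t=0$, $(1-\overline{u_\infty^2})\phi_t=0$, $(1-\overline{u_\infty^2})\nabla\phi_t=0$ (proved as in Lemma~\ref{cr}) convert the drift term into $(K-1)(1-c_\infty)\phi_t$. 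This sign/difference-quotient mechanism on the saturated set $\{u_\infty=1\}$ is what reaches beyond $\{P_\infty>0\}$; your final step (deducing \eqref{k2} by taking $\phi_t\in C_c^\infty(\mathcal{O}(t))$) is correct, but it needs \eqref{k1} in its full strength, which your route does not deliver.
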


\vspace{3mm}

We decompose the stiff pressure effect ($\Delta P_\infty$) as in \cite{KM2023}. Define 
\[\mu_t:=\Delta P_\infty(x,t)+(K-1)(1-c_\infty(x,t))\textbf{1}_{\mathcal{P}(t)}\quad \text{in }\mathcal{D}'(\Omega).\]
Then we show that $\mu_t$ is a non-negative measure. 
\begin{proposition}\label{dp}
For $K>1$ and almost everywhere $t>0$, up to a zero measure in time $t$, it follows
\begin{equation}\label{suppnu}
supp(\mu_t)\subset \partial \mathcal{P}(t)\backslash\mathcal{O}(t),\quad \mu_t\geq 0\text{ on }\Omega,\quad   \text{and }\mu_t(\textbf{1}_{\Omega})< C.
\end{equation}
\end{proposition}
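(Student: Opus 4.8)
The plan is to exploit the characterization obtained in Proposition~\ref{ecr} together with the structural decomposition of $\Delta P_\infty$, following the obstacle-problem viewpoint of \cite{KM2023,GKM2022}. First I would fix an admissible time $t$, that is, one at which $P_\infty(\cdot,t)\in H^1(\Omega)$, the complementarity relation \eqref{CR} holds in the distributional sense, and the conclusions of Proposition~\ref{ecr} are valid; the set of such $t$ is of full measure by the statements cited. For such $t$ we write, by definition, $\mu_t=\Delta P_\infty(\cdot,t)+(K-1)(1-c_\infty(\cdot,t))\mathbf{1}_{\mathcal{P}(t)}$ in $\mathcal D'(\Omega)$.

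\emph{Support property.} On the open set $\mathcal{O}(t)$ we have $u_\infty(\cdot,t)=1$ a.e., so $\mathbf 1_{\mathcal{O}(t)}$ is an admissible test weight in Proposition~\ref{ecr}; hence \eqref{k2} gives $\Delta P_\infty+(K-1)(1-c_\infty)=0$ in $\mathcal D'(\mathcal O(t))$, and since $\mathcal O(t)\subset\mathcal P(t)$ up to a null set (on $\{u_\infty=1\}$ one has $P_\infty>0$ a.e.\ by \eqref{sr1}/\eqref{dualitye}, or at any rate $\mathbf 1_{\mathcal P(t)}=1$ there), we get $\mu_t=0$ on $\mathcal O(t)$. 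On the complement of $\overline{\mathcal P(t)}$, i.e.\ on $\{P_\infty(\cdot,t)=0\}^{\circ}$, both $P_\infty$ and $\nabla P_\infty$ vanish, so $\Delta P_\infty=0$ there and $\mathbf 1_{\mathcal P(t)}=0$, whence $\mu_t=0$. Thus $\mathrm{supp}(\mu_t)\subset\overline{\mathcal P(t)}\setminus(\mathcal O(t)\cup\{P_\infty=0\}^{\circ})$, which is contained in $\partial\mathcal P(t)\setminus\mathcal O(t)$.

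\emph{Sign and mass.} For the sign, I would take a non-negative test function $\psi\in C_c^\infty(\Omega)$ and split $\langle\mu_t,\psi\rangle$ using $\psi=\psi\mathbf 1_{\mathcal P(t)}+\psi\mathbf 1_{\mathcal P(t)^c}$. On $\mathcal P(t)^c$ the contribution of $\Delta P_\infty$ is $-\int\nabla P_\infty\cdot\nabla(\psi\mathbf 1_{\mathcal P(t)^c})$, which vanishes since $\nabla P_\infty=0$ a.e.\ on $\{P_\infty=0\}$, and $\mathbf 1_{\mathcal P(t)}=0$ there, so only the $\mathcal P(t)$ part survives. On $\mathcal P(t)$ we have $P_\infty>0$, hence $u_\infty=1$ by \eqref{sr1}, so the full complementarity relation $P_\infty[\Delta P_\infty+(K-1)(1-c_\infty)]=0$ in $\mathcal D'$ forces $\Delta P_\infty+(K-1)(1-c_\infty)=0$ in $\mathcal D'(\mathcal P(t))$. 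The positivity of $\mu_t$ then comes from comparing against $\psi_\delta:=\psi\cdot\chi(P_\infty/\delta)$ with $\chi$ a smooth cutoff, letting $\delta\to0$: the "jump" of $\nabla P_\infty$ across $\partial\mathcal P(t)$ points inward because $P_\infty\ge0$ attains its minimum $0$ on $\partial\mathcal P(t)$, so $-\Delta P_\infty$ acquires a non-negative singular part there; equivalently, since $P_\infty\ge0$ with $P_\infty=0$ off $\mathcal P(t)$, for non-negative $\psi$ one has $\langle\mu_t,\psi\rangle=\lim_{\delta\to0}\frac1\delta\int_{\{0<P_\infty<\delta\}}|\nabla P_\infty|^2\psi\,dx\ge0$, using \eqref{k1} to cancel the bulk term. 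Finally, for the total mass bound I would test with $\psi\equiv1$ (approximated by $\psi_R\uparrow1$ using $\nabla P_\infty\cdot\vec n=0$ on $\Gamma_T$): $\mu_t(\mathbf 1_\Omega)=(K-1)\int_{\mathcal P(t)}(1-c_\infty)\,dx\le (K-1)|\Omega|=:C$, since $0\le c_\infty\le1$ by Lemma~\ref{cr}; integrating by parts leaves no boundary term by the Neumann condition in \eqref{hss1}.

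\emph{Main obstacle.} The delicate point is making rigorous the claim that the singular part of $-\Delta P_\infty$ on $\partial\mathcal P(t)$ is non-negative \emph{without} any BV or trace regularity of $P_\infty(\cdot,t)$ beyond $H^1$: the naive "inward jump of the normal derivative" argument presumes a rectifiable interface. I would circumvent this by staying purely at the level of distributions: using the admissible test functions of Proposition~\ref{ecr} of the form $\phi_t=P_{\infty,\delta}'\psi$ (where $P_{\infty,\delta}'=\min\{P_\infty/\delta,1\}$ vanishes where $u_\infty<1$ only in the limit—so one must instead use $\phi_t$ supported in $\mathcal O(t)$ and approximate), and comparing with the global relation \eqref{CR}, the sign emerges from the identity $\langle\mu_t,\psi\rangle=\lim_{\delta\to0^+}\delta^{-1}\iint_{\{0<P_\infty<\delta\}}|\nabla P_\infty|^2\psi\,dx$, whose non-negativity is manifest. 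Verifying that this limit exists and represents $\mu_t$ is the real work; it parallels \cite[Lemma/Prop]{KM2023} and reduces to a coarea/layer-cake computation combined with the complementarity relation on $\mathcal P(t)$. The measure-zero-in-$t$ exceptional set is harmless since all the pointwise-in-time statements used (membership in $H^1$, validity of \eqref{k1}, $\nabla P_\infty=0$ a.e.\ on $\{P_\infty=0\}$) hold for a.e.\ $t$.
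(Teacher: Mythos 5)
Your proposal follows essentially the same route as the paper: the same three-way support argument (test functions supported in $\mathcal{O}(t)$, test functions supported in the interior of $\{P_\infty(\cdot,t)=0\}$ where $\nabla P_\infty=0$ a.e., plus the inclusion $\mathrm{Int}(\mathcal{P}(t))\subset\mathcal{O}(t)$ via \eqref{sr1}), the same cutoff $H_\delta(P_\infty)=\min\{P_\infty/\delta,1\}$ inserted as an admissible test function in Proposition~\ref{ecr} for the sign, and the same computation $\mu_t(\textbf{1}_\Omega)=(K-1)\int_{\mathcal{P}(t)}(1-c_\infty)\,dx\leq C$ for the mass. The ``main obstacle'' you flag is not actually needed: the paper never identifies the limit $\delta^{-1}\int_{\{0<P_\infty<\delta\}}|\nabla P_\infty|^2\psi\,dx$, but only uses that this term is $\geq 0$ for each fixed $\delta$ and that the remaining term $\int_\Omega\nabla P_\infty\cdot\nabla\psi\,[H_\delta(P_\infty)-1]\,dx$ tends to $0$ as $\delta\to 0^+$ (because $\nabla P_\infty=0$ a.e.\ on $\{P_\infty=0\}$), which already yields $\mu_t(\psi)\geq0$ without any coarea or trace argument.
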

\noindent\underline{\textbf{Proof of Proposition~\ref{ecr}}}
Under the assumptions that $\phi_{t}\in H^1(\Omega)$ and $\phi_{t}(x)(1-u_\infty(x,t))=0$ a.e. in $x\in\Omega$, similar to the proof of Lemma~\ref{cr}, we conclude
\begin{equation}\label{r232}
(1-u_\infty(x,t))\nabla \phi_{t}=0,\quad(1-\overline{u_\infty^2}(x,t)) \phi_{t}=0,\quad (1-\overline{u_\infty^2}(x,t))\nabla \phi_{t}=0,\quad \text{ a.e. } x\in\Omega.  
\end{equation}

For any $T>0$, let 
$u_\infty,\overline{u^2_\infty}\in L^\infty(Q_T)$, $P_\infty\in L^2(0,T;H^1(\Omega))$
and $c_\infty\in L^p(0,T;W^{1,p}(\Omega))$
for all $p\in[1,\infty)$ be the weak limits of $u_m,u_m^2,P_m,c_m$, respectively. Then, we pass to the limit in \eqref{deq}$_1$ as $m\to\infty$. By \eqref{umss}, we get
\begin{equation*}
    \partial_tu_\infty=\Delta P_\infty-\nabla\cdot((Ku_\infty-\overline{u_\infty^2})\nabla c_\infty),\quad \text{in  }\mathcal{D}'(Q_T).
\end{equation*}
For $t<T$, consider $h>0$ such that $0< t+h\le T$. Using \eqref{r232}, it holds
\[\begin{aligned}\int_{t}^{t+h}\hskip-4pt\int_\Omega&-\nabla\phi_t\cdot\nabla P_\infty(x,s)+(Ku_\infty-\overline{u_\infty^2})\nabla c_\infty\cdot\nabla\phi_{t}(x) dxds\\
=&\int_{\Omega}[u_\infty(x,t)-u_\infty(x,t+h)]\phi_{t}(x) dx=\int_{\Omega}[1-u_\infty(x,t+h)]\phi_{t}(x) dx\geq0,\end{aligned}\]
and it further follows
\[\frac{1}{h}\int_{t}^{t+h}\hskip-6pt\int_\Omega-\nabla\phi_{t}\cdot\nabla P_\infty(x,s)+(Ku_\infty-\overline{u_\infty^2})\nabla c_\infty(x,s)\cdot\nabla\phi_{t}(x) dxds
\geq0.\]
Let $h\to 0^+$ and use Lebesgue's differentiation theorem, up to a set of measure zero in $t>0$, we obtain 
\[\int_\Omega-\nabla\phi_{t}(x)\cdot\nabla P_\infty(x,t)+(Ku_\infty(x,t)-\overline{u_\infty^2}(x,t))\nabla c_\infty(x,t)\cdot\nabla\phi_{t}(x) dx
\geq0.\]
By \eqref{r232}, we deduce
\[\int_\Omega(K-1)\nabla c_\infty(x,t)\cdot\nabla\phi_{t}(x) dx
\geq \int_\Omega\nabla\phi_{t}(x)\cdot\nabla P_\infty(x,t) dx,\]
which derives 
\[\int_\Omega(K-1)(1-c_\infty(x,t))\phi_{t}(x) dx
\geq \int_\Omega\nabla\phi_{t}(x)\cdot\nabla P_\infty(x,t) dx,\quad \text{up to a zero measure set in }t>0.\]

Conversely, considering $h\to0^-$, then we get the corresponding result 
\[\int_\Omega(K-1)(1-c_\infty(x,t))\phi_{t}(x) dx
\leq \int_\Omega\nabla\phi_{t}(x)\cdot\nabla P_\infty(x,t) dx,\quad\text{up to a zero measure set in }t>0.\]
Hence, we deduce \eqref{k1}, and further derives \eqref{k2}. 

\vspace{4mm}

\noindent\underline{\textbf{Proof of Proposition~\ref{dp}}} \emph{Step 1.}
We first prove the support property of \eqref{suppnu}. By the definition of $\mu_t$, for any $\varphi\in \mathcal{C}^1(\overline{\Omega})$, it holds
\begin{equation}\label{use}
\mu_t(\varphi)=-\int_{\Omega}\nabla P_\infty\cdot\nabla \varphi dx+\int_{\Omega}(K-1)(1-c_\infty)\textbf{1}_{\mathcal{P}(t)}\varphi dx,\quad \text{a.e. }t>0.\end{equation}
On the one hand, for $\varphi$ is supported in $\mathcal{O}(t)$ (defined by (\eqref{interior}), by means of  \eqref{CR}, we have 
\[\mu_t(\varphi)=0, \text{ a.e. }t> 0, \]
which implies
\[
\text{supp}(\mu_t)\cap \mathcal{O}(t)=\emptyset.
\]
On the other hand, when $\varphi$ is supported in $\{x\in\Omega: P_\infty(x,t)=0\}$, due to $P_\infty(t)\in H^1(\Omega)$, which implies $\nabla P_\infty=0$ a.e. in $\{P_{\infty}(\cdot,t)=0\}$,  \eqref{use} yields 
\[\mu_t(\varphi)=0,\text{ a.e. }t>0,
\]
which indicates
\[
\text{supp}(\mu_t)\cap\text{Int}(\{P_\infty(\cdot,t)=0\})=\emptyset.
\]

In addition, notice that $\text{Int}(\mathcal{P}(t))\subset \mathcal{O}(t)$. Indeed,
for $P_\infty(\cdot,t)>0$ in $\text{B}_\delta(x_0)$, from \eqref{sr1}, we have
\[
1-u_\infty(t)=0,\quad\text{a.e. } \text{in}\ \text{B}_\delta(x_0)\quad \Longrightarrow\quad  \int_{\text{B}_\delta(x_0)}1-u_\infty(t) dx=0,
\]
this follows that $x_0\in\mathcal{O}(t)$. Therefore, we have the first result of \eqref{suppnu}.
\\

\noindent\emph{Step 2.} We show the non-negative property of the measure $\mu_t$. We define the cut-off function 
\begin{equation*}
H_\delta(s)=\begin{cases}
\frac{s}{\delta},\quad&0\leq s \leq \delta,\\
1,\quad &s>\delta. 
\end{cases}
\end{equation*}
For any test function $\varphi\in \mathcal{D}(\Omega)$ with $0\leq \varphi\leq 1$, we use the result~\eqref{k1} via $\phi_t(x):=\varphi(x)H_\delta(P_\infty(x,t))$ and obtain 
\begin{equation*}
\begin{aligned}
\mu_t(\varphi)
=&\underbrace{\int_{\Omega}-\nabla P_\infty\cdot\nabla (\varphi H_\delta(P_\infty)+(K-1)(1-c_\infty)\textbf{1}_{\mathcal{P}(t)}\varphi H_\delta(P_\infty)dx}_{=0}\\
&+\langle\Delta P_\infty,\varphi(1-H_\delta(P_\infty))\rangle+\underbrace{\int_{\Omega}(K-1)(1-c_\infty)\textbf{1}_{\mathcal{P}(t)}\varphi[1-H_\delta(P_\infty)]dx}_{\geq 0}\\
\geq &\langle\Delta P_\infty,\varphi(1-H_\delta(P_\infty))\rangle.
\end{aligned}
\end{equation*}
We note 
\begin{equation*}
\begin{aligned}
\langle \Delta P_\infty,\varphi(1-H_\delta(P_\infty))\rangle=&\int_{\Omega}\nabla P_\infty\cdot \nabla\varphi [H_\delta(P_\infty)-1]dx+\int_{\Omega}|\nabla P_\infty|^2 H_\delta'(P_\infty)\varphi dx\\
\geq &\int_{\Omega}\nabla P_\infty\cdot \nabla\varphi [H_\delta(P_\infty)-1]dx.
\end{aligned}
\end{equation*}
Since $H_\delta(P_\infty)\to \textbf{1}_{\Omega}-\textbf{1}_{\{P_\infty=0\}}$ a.e. $x\in \Omega$ as $\delta\to 0^+$, using the dominated convergence theorem, it holds 
\begin{equation*}
\int_{\Omega}\nabla P_\infty\cdot \nabla\varphi [H_\delta(P_\infty)-1]dx\to 0,\text{ as }\delta\to 0^+.
\end{equation*}
Which, combined with the arbitrariness of $\delta\geq0$, implies
\begin{equation*}
\mu_t(\varphi)\geq 0.
\end{equation*}
Finally, we have
\[\mu_t(\textbf{1}_{\Omega})=\int_{\Omega}(K-1)(1-c_\infty)\textbf{1}_{\mathcal{P}(t)} dx\leq C,\quad\text{a.e. }t>0.\]

\hfill

\section{Preliminary lemmas}
We present some useful lemmas.
\begin{lemma}[Aubin-Lions-Simon,\cite{simon1986}]\label{al}Let $X_0$, $X$, and $X_1$ be three Banach Spaces with $X_0\subset X\subset X_1$. Suppose that $X_0$ is compactly embedded in $X$ and that $X$ is continuously embedded in $X_1$. Let 
\[W_{p,q}(0,T):=\{u\in L^p(0,T:X_0): \partial_t u\in L^q(0,T;X_1)\}\]
 with $1\leq p,q\leq \infty$, then the following hold:
\begin{itemize}
    \item[(\romannumeral1)] If $p<\infty$, then the embedding of $W_{p,q}(0,T)$ into $L^p(0,T;X)$ is compact. 
    \item[(\romannumeral2)]If $p=\infty$ and $q>1$, then the embedding of $W_{p,q}(0,T)$ into $\mathcal{C}(0,T;X)$ is compact.
\end{itemize}
\end{lemma}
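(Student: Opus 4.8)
The plan is to reduce the statement to two ingredients: an Ehrling-type interpolation inequality coming from the compactness of the embedding $X_0\hookrightarrow X$, and control of the time translations of $u$ coming from the bound on $\partial_t u$ in $X_1$; these are then combined through a vector-valued compactness criterion. First I would establish that for every $\eta>0$ there is $C_\eta>0$ with
\[\|v\|_X\le \eta\,\|v\|_{X_0}+C_\eta\,\|v\|_{X_1}\qquad\text{for every }v\in X_0.\]
This is the usual argument by contradiction: if it failed for some $\eta_0>0$ there would exist $v_n$ with $\|v_n\|_X=1$, $\|v_n\|_{X_0}\le 1/\eta_0$ and $\|v_n\|_{X_1}\to0$; the compact embedding $X_0\hookrightarrow X$ extracts $v_{n_k}\to v$ in $X$ with $\|v\|_X=1$, while continuity of $X\hookrightarrow X_1$ together with $\|v_n\|_{X_1}\to0$ forces $v=0$, a contradiction.

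Next I would take a bounded sequence $\{u_n\}\subset W_{p,q}(0,T)$, say $\|u_n\|_{L^p(0,T;X_0)}+\|\partial_t u_n\|_{L^q(0,T;X_1)}\le M$, and estimate its time translations. For $0<h<T$ one has, valued in $X_1$, the identity $u_n(t+h)-u_n(t)=\int_t^{t+h}\partial_t u_n(s)\,ds$, hence $\|u_n(t+h)-u_n(t)\|_{X_1}\le h^{1-1/q}\|\partial_t u_n\|_{L^q(t,t+h;X_1)}$ by H\"older in time (with the obvious reading when $q=1$). Integrating in $t$ and using the uniform bound on $\partial_t u_n$ gives $\sup_n\|u_n(\cdot+h)-u_n(\cdot)\|_{L^p(0,T-h;X_1)}\to0$ as $h\to0^+$ when $q>1$, and the analogue in $L^1(0,T-h;X_1)$ when $q=1$ (after exchanging the order of integration). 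Applying the interpolation inequality to $v=u_n(t+h)-u_n(t)$ and integrating in $t$ then yields
\[\|u_n(\cdot+h)-u_n(\cdot)\|_{L^p(0,T-h;X)}\le 2\eta M+C_\eta\,\|u_n(\cdot+h)-u_n(\cdot)\|_{L^p(0,T-h;X_1)},\]
so fixing $\eta$ small and then $h$ small shows that $\{u_n\}$ is equicontinuous in time, uniformly in $n$, as an $X$-valued family.

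For part (i) I would conclude by the vector-valued Riesz--Fr\'echet--Kolmogorov theorem: $\{u_n\}$ is bounded in $L^p(0,T;X)$ since $X_0\hookrightarrow X$ is continuous, the uniform equicontinuity in time has just been obtained, and for every $0<t_1<t_2<T$ the averages $\int_{t_1}^{t_2}u_n(t)\,dt$ lie in a bounded subset of $X_0$, hence in a relatively compact subset of $X$; this yields relative compactness of $\{u_n\}$ in $L^p(0,T;X)$, and a convergent subsequence. For part (ii), the hypothesis $q>1$ gives the embedding $W^{1,q}(0,T;X_1)\hookrightarrow C^{0,1-1/q}([0,T];X_1)$, so $\{u_n\}$ is uniformly H\"older, in particular equicontinuous, into $X_1$; combining this with the boundedness of $\{u_n(t)\}$ in $X_0$ (hence its relative compactness in $X$ for each fixed $t$) and the interpolation inequality once more shows that $\{u_n\}$ is uniformly equicontinuous as a family of maps $[0,T]\to X$, and the Arzel\`a--Ascoli theorem for $C([0,T];X)$ produces a subsequence converging in $C([0,T];X)$.

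The one genuinely delicate point, and where I expect the real effort to lie, is the endpoint case $q=1$ with $1<p<\infty$ in part (i): since $\partial_t u_n$ is then only integrable in time, the argument above gives the translation estimate only in $L^1(0,T-h;X_1)$, hence via interpolation only in $L^1(0,T-h;X)$, whereas compactness in $L^p(0,T;X)$ needs more. One repairs this by interpolating the $L^1$-in-time translation bound against the uniform bound of $\{u_n\}$ in $L^p(0,T;X_0)\hookrightarrow L^p(0,T;X)$, which upgrades it to a translation estimate in $L^{p'}(0,T-h;X)$ for every $p'<p$, and then recovers $L^p$-compactness by an equi-integrability argument; this is precisely the technical core of Simon's original proof in~\cite{simon1986}, which I would follow for this case.
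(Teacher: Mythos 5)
The paper does not prove this lemma; it is quoted verbatim as a classical result with a citation to Simon~\cite{simon1986}, whose argument is exactly the one you outline (Ehrling interpolation, time-translation estimates via the bound on $\partial_t u$, then the vector-valued Riesz--Fr\'echet--Kolmogorov criterion for (i) and Arzel\`a--Ascoli for (ii)). Your proposal is correct and matches the cited proof; the only remark is that the endpoint $q=1$ is slightly less delicate than you suggest, since $W^{1,1}(0,T;X_1)\hookrightarrow L^\infty(0,T;X_1)$ lets you upgrade the $L^1$-in-time translation bound to $L^p$ by interpolating against this uniform bound.
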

\begin{lemma}[cf.~\cite{gn1983}]\label{ei}Let $\Omega$ be a bounded domain in $\mathbb{R}^n$ with Lipschitz boundary and $p^*:=\frac{np}{n-p}$ with $1\leq p<n$. Then there is a positive constant $c$, depending on $n$, such that 
\[\|u-u_S\|_{L^{p^*}(\Omega)}\leq \frac{c d^{n+1-\frac{n}{p}}}{|S|^{\frac{1}{p}}}\|\nabla u\|_{L^p(\Omega)},\quad \forall\ u\in W^{1,p}(\Omega),\]
where $S$ is any measurable subset of $\Omega$ with $|S|>0$, $u_S=\frac{1}{|S|}\int_{S}udx$, and $d$ is the diameter of $\Omega$.
\end{lemma}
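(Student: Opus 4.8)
\noindent\emph{Proof strategy.} This is the classical weighted Sobolev--Poincar\'e inequality, so the plan is to reproduce its standard proof through a Riesz-potential estimate. First I would reduce to smooth functions: since $C^\infty(\overline\Omega)$ is dense in $W^{1,p}(\Omega)$ on a bounded Lipschitz domain and $W^{1,p}(\Omega)\hookrightarrow L^{p^*}(\Omega)$ (so that $W^{1,p}$--convergence forces $L^{p^*}$--convergence, hence convergence of the average $u_S$), it suffices to establish the inequality for $u\in C^\infty(\overline\Omega)$ and pass to the limit, the constant being unchanged.

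For smooth $u$ the starting point is the identity $u(x)-u_S=\frac1{|S|}\int_S\bigl(u(x)-u(y)\bigr)\,dy$. Writing $\omega_{xy}:=(y-x)/|y-x|$ and integrating along the segment joining $x$ to $y$ gives $|u(x)-u(y)|\le\int_0^{|x-y|}|\nabla u(x+r\omega_{xy})|\,dr$, whence
\[
|u(x)-u_S|\le\frac1{|S|}\int_S\int_0^{|x-y|}|\nabla u(x+r\omega_{xy})|\,dr\,dy .
\]
Passing to polar coordinates $y=x+\rho\omega$ centred at $x$, enlarging $S$ to $B_d(x)\supseteq\Omega$ (and extending $\nabla u$ by zero outside $\Omega$), interchanging the $r$-- and $\rho$--integrals and using $\int_r^{d}\rho^{n-1}\,d\rho\le d^{n}/n$, one arrives at the pointwise bound
\[
|u(x)-u_S|\le\frac{d^{n}}{n|S|}\int_\Omega\frac{|\nabla u(y)|}{|x-y|^{n-1}}\,dy=\frac{d^{n}}{n|S|}\,\bigl(I_1|\nabla u|\bigr)(x),
\]
where $I_1$ is the Riesz potential of order $1$. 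The straight-segment step uses convexity; for a general Lipschitz $\Omega$ one reduces to this case (for instance through the convex hull together with a bounded extension, or a finite chain of balls), at the harmless price of letting the constant depend also on the fixed domain $\Omega$.

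It then remains to take the $L^{p^*}(\Omega)$ norm and invoke the Hardy--Littlewood--Sobolev inequality, $\|I_1 g\|_{L^{p^*}(\Omega)}\le C(n,p)\|g\|_{L^p(\Omega)}$ for $1<p<n$ with $C(n,p)$ purely dimensional, which produces a bound of the asserted form,
\[
\|u-u_S\|_{L^{p^*}(\Omega)}\le\frac{C(n,p)\,d^{n}}{n|S|}\,\|\nabla u\|_{L^p(\Omega)},
\]
the exact algebraic shape of the factor in front of $\|\nabla u\|_{L^p}$ depending on the normalisation used (one may also trade powers of $d$ and $|S|$ with the help of $|S|\le|\Omega|$ and $|\Omega|\lesssim_n d^{n}$), while only its independence of $u$, and of $m$ in the later applications, is what matters. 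I expect the one genuinely delicate point to be this last step at the Sobolev endpoint: the mapping $I_1:L^p\to L^{p^*}$ is only of weak type at $p=1$ (where $p^*=n/(n-1)$), so there one argues instead directly through the Gagliardo--Nirenberg--Sobolev inequality $\|v\|_{L^{n/(n-1)}(\mathbb{R}^n)}\le c(n)\|\nabla v\|_{L^1(\mathbb{R}^n)}$ applied after subtracting $u_S$ and truncating; the only other mild subtlety is the convexity issue in the segment representation noted above.
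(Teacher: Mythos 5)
The paper offers no proof of this lemma: it is quoted from \cite{gn1983} (it is the weighted Poincar\'e--Sobolev inequality obtained from Lemma~7.16 and the potential estimates of Section~7.8 there), so there is nothing internal to compare your argument against. Your route --- density reduction, the segment representation $|u(x)-u_S|\le\frac{d^n}{n|S|}\,I_1(|\nabla u|)(x)$, then mapping properties of the Riesz potential --- is exactly the standard Gilbarg--Trudinger proof, and your two flagged ``delicate points'' are the right ones: the representation genuinely needs convexity (GT state it for convex $\Omega$; on a general Lipschitz domain a dumbbell example shows the constant cannot depend on $n$ alone, so the lemma as quoted is already slightly too strong), and the exponent $q=p^*$ sits exactly at the borderline $\delta=\mu$ excluded by GT's Lemma~7.12, so one must invoke Hardy--Littlewood--Sobolev for $1<p<n$ and the Gagliardo--Nirenberg--Sobolev/truncation argument at $p=1$, as you do.

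One point deserves to be made sharper than your ``trade powers of $d$ and $|S|$'' remark. The constant you actually obtain, $c(n,p)\,d^n/|S|$, is the correct, scale-invariant one ($d^n/|S|$ is dimensionless while $\|I_1\|_{L^p\to L^{p^*}}$ is purely dimensional). The constant printed in the statement, $c\,d^{n+1-n/p}/|S|^{1/p}$, is \emph{not} scale-invariant (it scales like $\lambda^{\,n+1-2n/p}$ under $x\mapsto\lambda x$), so it cannot hold with $c=c(n)$ uniformly over all domains and cannot be recovered from your bound by the inequalities $|S|\le|\Omega|\lesssim_n d^n$ alone; it is presumably a misquotation of \cite{gn1983}. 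This is harmless for the paper: in the only place the lemma is used ($p=2$, $S=\{u_m<1\}$ with $|S|\ge(1-\alpha)|\Omega|$ on a fixed bounded $\Omega$), any constant of the form $C(n,\Omega)/|S|$ suffices, and that is exactly what your argument delivers.
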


\bibliographystyle{abbrv}
\parskip=0pt
\small
\bibliography{Paper}

\end{document}